\newtheorem{Lemma}{Lemma}
\newtheorem{Proposition}{Proposition}
\newtheorem{Corollary}{Corollary}
\newtheorem{Theorem}{Theorem}
\theoremstyle{remark}
\newtheorem*{Remark}{Remark}
\begin{document}
\title{Hyperbolic periodic orbits in nongradient systems and small-noise-induced metastable transitions}

\author{Molei Tao}

\maketitle

\abstract{Small noise can induce rare transitions between metastable states, which can be characterized by Maximum Likelihood Paths (MLPs). Nongradient systems contrast gradient systems in that MLP does not have to cross the separatrix at a saddle point, but instead possibly at a point on a hyperbolic periodic orbit. A numerical approach for identifying such unstable periodic orbits is proposed based on String method. In a special class of nongradient systems (`orthogonal-type'), there are provably local MLPs that cross such saddle point or hyperbolic periodic orbit, and the separatrix crossing location determines the associated local maximum of transition rate. In general cases, however, the separatrix crossing may not determine a unique local maximum of the rate, as we numerically observed a counter-example in a sheared 2D-space Allen-Cahn SPDE. It is a reasonable conjecture that there are always local MLPs associated with each attractor on the separatrix, such as saddle point or hyperbolic periodic orbit; our numerical experiments did not disprove so.}


\section{Introduction and main results}
Rare dynamical events induced by small noise can nevertheless be important. Examples of reactive rare events include climate changes, phase transitions, and switching of macromolecular conformations \cite{HeVa08c}. It is not ideal to study these events by integrating the dynamics, because most of the computation will be wasted on waiting for rare events to occur. Freidlin-Wentzell large deviation theory \cite{FrWe12} provides an assessment of likelihoods of rare events. More precisely, consider an SDE
\begin{equation}
    dX=f(X)dt + \sqrt{\epsilon} dW,
    \label{eq_generalSDE}
\end{equation}
where $X\in \mathbb{R}^d$, $\epsilon$ is a small parameter, and $W$ is a $d$-dimensional Wiener process\footnote{The assumption of additive noise is not essential and only for simplicity; see for instance \cite{HeVa08a} on generalized situations.}. Freidlin-Wentzell theory states, as $\epsilon\rightarrow 0$ and given boundary condition $X(T_1)=x_a$ and $X(T_2)=x_b$, the probability density of a solution $X(\cdot)$ is asymptotically proportional to $\exp(-S_{T_1,T_2}[X]/\epsilon)$, where the action functional is given by
\begin{equation}
    S_{T_1,T_2}[X]:=\begin{cases}
        \frac{1}{2}\int_{T_1}^{T_2} \left\| \dot{X}(s)-f(X(s)) \right\|^2 ds,	& \quad X\in \bar{\mathcal{C}}_{x_a}^{x_b}(T_1,T_2) \\
        \infty,	& \quad X \not\in \bar{\mathcal{C}}_{x_a}^{x_b}(T_1,T_2)
    \end{cases},
    \label{eq_FWaction}
\end{equation}
where $\bar{\mathcal{C}}_{x_a}^{x_b}(T_1,T_2)$ is the space of absolutely continuous functions in $[T_1,T_2]$ that satisfy $X(T_1)=x_a$ and $X(T_2)=x_b$. 

In the $\epsilon \rightarrow 0$ limit, the transition probability is characterized by the minimizer of the action. In addition, in many situations $T_1,T_2$ is not known, and in this case it is natural to perform an additional outer minimization over all $T_1<T_2$. If one does so, the minimum is generally achieved when $T_2-T_1\rightarrow \infty$ \cite{ren2004minimum, HeVa08a}. Therefore, from now on, we assume $T_1=-\infty, T_2=\infty$ and seek minimizers\footnote{In most parts of this article we will only seek local minimizers. The reason is convexity is not guaranteed and global minimization might be too difficult.} of $S_\infty[\cdot]$ with boundary conditions. Such minimizers will be called maximum likelihood paths (MLPs) throughout this article (they are also called instantons in the physical literature). Also, we will be mainly working with metastable transitions, i.e., $x_a$ and $x_b$ are two stable fixed points in the noise-less system $\dot{X}=f(X)$.

If the system is gradient, i.e., there is a scalar field $V(\cdot)$ such that $f=-\nabla V$, it is known that a MLP between two local minima of $V$ coincides with a Minimum Energy Path (MEP), which is defined as a trajectory along which $-\nabla V$ is always parallel to the path. Many computational methods have been developed to compute MEPs, such as \cite{ulitsky1990new,fischer1992conjugate,olender1996calculation,berne1998classical,henkelman1999dimer,henkelman2000climbing,henkelman2000improved,StringMethod,StringSimplified}. Among them is String method \cite{StringMethod,StringSimplified}, which is compared with others in \cite{sheppard2008optimization}, briefly summarized in Appendix \ref{sec_review_String}, and will be modified in Section \ref{sec_pString}.
In addition, it is known that an MEP has to cross at least one saddle point of the potential energy $V$, which corresponds to a saddle point on a separatrix submanifold in the noise-less dynamical system (e.g., \cite{FrWe12}). It was further shown that the identification of this saddle point helps improve MEP computations (e.g., \cite{sheppard2008optimization}). A number of approaches have been proposed to locate saddle points, including \cite{crippen1971minimization, henkelman1999dimer, weinan2011gentlest, zhang2012shrinking, ren2013climbing}.

It should be no surprise that transitions in nongradient systems can be more complicated. After all, gradient systems correspond to nonequilibrium statistical mechanics that are reversible diffusion processes (i.e., satisfying detailed balance), while nongradient systems may correspond to irreversibility (e.g., see \cite{FrWe12,maier1993escape}). Since many important systems are nongradient, including Langevin models of mechanical systems in constant temperature environment (e.g., \cite{gardiner1985handbook}), stochastic fluid models (e.g., \cite{flandoli1995martingale}), or irreversible coarse-grained systems (e.g., \cite{maier1992transition}), numerous efforts have been made to understand metastable transitions in nongradient systems. These include \cite{Bouchet2016, landim2016metastability, maier1996oscillatory, berglund2004noise, berglund2014noise, berglund2016noise, maier1993escape, maier1997limiting, kraut2004escaping, silchenko2005fluctuational, kautz1988thermally, cameron2012finding, zhou2010study, wan2010study, newhall2013averaged, newhall2016metastability, berglund2013kramers}, which will be discussed in the context of this article after three paragraphs. One key issue with general nongradient systems is, MEPs are no longer defined, because there is no energy landscape on which the system evolves, and there may be no path whose tangent aligns with $f$ everywhere. However, MLPs as minimizers of the action functional \eqref{eq_FWaction} can still be investigated, and their identification is essential for characterizing rare events in these systems. Several successful numerical methods for computing MLPs have been proposed, and we refer to \cite{ren2004minimum, HeVa08a, VaHe08b, HeVa08c, zhou2008adaptive} for examples. Among them is geometric Minimum Action Method (gMAM) \cite{HeVa08a, VaHe08b, HeVa08c}, which is briefly summarized in Appendix \ref{sec_review_gMAM} and will be modified later in this article.

As a general nongradient theory is still incomplete, this article makes a small step by showing the followings: unlike in gradient systems, MLP does not have to cross a saddle point in a nongradient system, and in fact there may be no saddle point at all. The second simplest limit set, namely periodic orbit, which is generally excluded in gradient systems, may be present on the separatrix and utilized by the metastable transition. More specifically, for a class of nongradient systems dubbed `orthogonal-type', the transition rate is again characterized by a barrier height like in the gradient case (this result was stated in \cite{FrWe12}), and given a saddle point or a hyperbolic periodic orbit that locally attracts on the separatrix, there is a unique associated local minimum action and an explicitly defined path that achieves this action. Interestingly, periodic-orbit-crossing MLPs differ significantly from a saddle-crossing MLP, as their arc-lengths are infinite and there are infinitely many of them, even if there is only one periodic orbit. On the other hand, for a non-orthogonal-type nongradient system, numerically obtained local MLPs also cross saddles or periodic orbits, but there can be multiple local MLPs that correspond to the same separatrix crossing location but with different action values.

Two numerical methods play critical roles in this study. One of them identifies hyperbolic periodic orbits, based on a variation of String method \cite{StringMethod,StringSimplified}. The other numerically computes MLPs by supplementing the geometric Minimum Action Method (gMAM, \cite{HeVa08a, VaHe08b, HeVa08c}) with information about the separatrix crossing locations.

Several facts have to be mentioned: (i) There have been previous studies on transitions through periodic orbits. Most of these studies considered an unstable periodic orbit (rather than hyperbolic), which is the boundary of the attraction basin of a metastable state, and the systems are naturally 2D (e.g., \cite{maier1996oscillatory, berglund2004noise, berglund2014noise}). There is a study of the 3D Lorenz system, based on careful numerical investigations \cite{zhou2010study}. In addition, a recent work \cite{berglund2016noise} considered systems in which periodic orbits can be globally characterized by phase angle variables, and demonstrated metastable transitions between two stable periodic orbits through unstable periodic orbits. This article focuses on hyperbolic periodic orbits for less specific problems and the dimension can be much higher. (ii) This article is based on the traditional Freidlin-Wentzell large deviation theory and thus does not discuss the prefactor of the transition rate. However, several important contributions have been made to analyze nongradient systems \cite{maier1993escape, Bouchet2016, landim2016metastability}, and they quantitatively discussed the prefactor given by the Eyring-Kramer formula (see also \cite{berglund2013kramers} for a review). (iii) Most theoretical claims in this article are natural consequences of Freidlin and Wentzell's results on orthogonal-type nongradient systems (see Chap 4.3 of \cite{FrWe12}), and our purpose is to combine them with numerical investigations to make the link between periodic orbits and rare events explicit. A beautiful concurrent article \cite{Bouchet2016} also studied nongradient systems using the same tool of orthogonal decomposition (along with other powerful machinery such as asymptotic analysis), but its scope is complementary, because it assumed saddle point is the only attractor on the separatrix. Note the orthogonal-type system considered here was called in that article (a system admitting) `transverse decomposition'. (iv) Nongradient systems in 2D have been extensively investigated (e.g., \cite{kautz1988thermally,maier1997limiting,cameron2012finding} in addition to aforementioned \cite{maier1996oscillatory, berglund2004noise, berglund2014noise}), several high-dimensional systems of practical relevance have been explored \cite{newhall2013averaged, wan2010study, newhall2016metastability}, and discrete systems have also been studied (e.g., \cite{kraut2004escaping,silchenko2005fluctuational}).

This article is organized as follows. Section \ref{sec_orthoSystem} analyzes orthogonal-type nongradient systems so that the link between hyperbolic periodic orbit and MLP can be explicitly established. Section \ref{sec_examples} uses concrete examples to illustrate several features of metastable transitions distinct from gradient systems. 
In Section \ref{sec_pString}, String method is modified to identify hyperbolic periodic orbits in deterministic dynamical systems. Section \ref{sec_results} demonstrates how this identification improves gMAM-based MLP computation; using this improved numerical tool, phase space structures of a (2+1)-dimensional reaction-diffusion-advection PDE are explored, and differences between orthogonal- and non-orthogonal-type systems are discussed.

Many examples in this article are generalizations of a 1D gradient system with double well potential $V(x)=(1-x^2)^2/4$, but the specific form of this potential is not essential --- similar conclusions will apply to smooth bistable potentials. However, our investigation is limited to systems with two stable fixed points. In principle, it is possible to study systems with more sinks by first investigating each barrier crossing event using similar techniques and then constructing a network of barrier crossings (see for example \cite{FrWe12,freidlin2000quasi,bovier2004metastability,bovier2005metastability,schutte2011markov}), but it is beyond the scope of this article.

\section{Nongradient systems of orthogonal-type}
\label{sec_orthoSystem}
\subsection{The orthogonal-type system}
\label{sec_orthoSystem_system}
Consider a class of nongradient systems defined on $\mathbb{R}^d$, in the form of
\begin{equation}
    dX= ( -\nabla V(X)+b(X) )dt + \sqrt{\epsilon} dW,
    \label{eq_orthoSystem}
\end{equation}
where $\nabla V$ and $b$ are smooth and satisfy $\nabla V(x)\cdot b(x)=0$ for all $x\in \mathbb{R}^d$. Consider also the deterministic version
\begin{equation}
    \dot{X}= -\nabla V(X)+b(X).
    \label{eq_orthoSystem_deterministic}
\end{equation}
Suppose \eqref{eq_orthoSystem_deterministic} contains two stable fixed points $x_a$ and $x_b$, and their basins of attractions $\mathcal{D}_a$ and $\mathcal{D}_b$ cover the entire phase space (i.e., $\overline{\mathcal{D}_a} \bigcup \overline{\mathcal{D}_b} = \mathbb{R}^d$). Consider the separatrix submanifold $\mathcal{S}$, which is the boundary between basins of attractions of $x_a$ and $x_b$ (i.e., $\mathcal{S}=\partial \mathcal{D}_a \bigcap \partial \mathcal{D}_b$). Assume there is at least one saddle point $x_s$ or hyperbolic periodic orbit $x_{PO}(t)$ in $\mathcal{S}$ that is attracting on $\mathcal{S}$, i.e., with its stable manifold containing a neighborhood of $x_s$ or $\{x_{PO}(t)|\forall t\}$ in $\mathcal{S}$. Assume there is a heteroclinic orbit that goes from $x_a$ to $x_s$ or a point on $x_{PO}(t)$ in an auxiliary dynamical system $\dot{X}= \nabla V(X)+b(X)$.

Fixed point or periodic orbit in system \eqref{eq_orthoSystem_deterministic} satisfies the following:

\begin{Lemma}
    $\nabla V(x_s)=0$.
\end{Lemma}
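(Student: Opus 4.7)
The plan is to combine the fixed-point condition at $x_s$ with the global orthogonality hypothesis $\nabla V(x)\cdot b(x)=0$ and extract $\nabla V(x_s)=0$ by a one-line algebraic manipulation. Since $x_s$ is a saddle of the deterministic system \eqref{eq_orthoSystem_deterministic}, it is in particular a fixed point, so $-\nabla V(x_s)+b(x_s)=0$, i.e.\ $b(x_s)=\nabla V(x_s)$.

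Evaluating the orthogonality identity at $x=x_s$ then gives
\begin{equation*}
0=\nabla V(x_s)\cdot b(x_s)=\nabla V(x_s)\cdot \nabla V(x_s)=\|\nabla V(x_s)\|^2,
\end{equation*}
from which $\nabla V(x_s)=0$ follows immediately (and, as a byproduct, $b(x_s)=0$ as well).

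There is no real obstacle here: the content is entirely that pointwise orthogonality of two vectors forces each to vanish whenever they happen to be equal, and the fixed-point relation provides exactly that equality. The only thing worth flagging is that this argument uses pointwise (not just $L^2$ or almost-everywhere) orthogonality, which is guaranteed by the assumption ``$\nabla V(x)\cdot b(x)=0$ for all $x\in\mathbb{R}^d$'' stated in Section \ref{sec_orthoSystem_system}; no smoothness or nondegeneracy of $x_s$ is actually needed.
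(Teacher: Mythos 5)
Your argument is correct and is exactly the paper's proof: use the fixed-point relation $-\nabla V(x_s)+b(x_s)=0$ together with the pointwise orthogonality $\nabla V\cdot b=0$ to conclude $\|\nabla V(x_s)\|^2=0$. You have merely written out the one-line computation the paper states in words, so there is nothing to add.
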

\begin{proof}
    Since $x_s$ is a fixed point, $-\nabla V(x_s)+b(x_s)=0$. By orthogonality of $\nabla V$ and $b$, both are zero.
\end{proof}

\begin{Lemma}
    $\nabla V(x_{PO}(t))=0$ for all $t$.
    \label{thm_vanishing_dV}
\end{Lemma}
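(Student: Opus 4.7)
The plan is to exploit the orthogonality $\nabla V\cdot b = 0$ to show that $V$ is a Lyapunov-type function along trajectories of \eqref{eq_orthoSystem_deterministic}, then use periodicity to force the dissipation to vanish identically on the orbit.

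First I would compute the time derivative of $V$ along the deterministic flow: since $\dot{x}_{PO}(t) = -\nabla V(x_{PO}(t)) + b(x_{PO}(t))$, the chain rule and the orthogonality assumption give
\begin{equation*}
\frac{d}{dt} V(x_{PO}(t)) = \nabla V(x_{PO}(t))\cdot \bigl(-\nabla V(x_{PO}(t)) + b(x_{PO}(t))\bigr) = -\bigl\|\nabla V(x_{PO}(t))\bigr\|^2.
\end{equation*}
Thus $V\circ x_{PO}$ is a smooth, nonincreasing function of $t$.

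Next I would use periodicity. Let $T>0$ be the period of $x_{PO}$. Integrating the previous identity from $0$ to $T$ gives
\begin{equation*}
0 = V(x_{PO}(T)) - V(x_{PO}(0)) = -\int_0^T \bigl\|\nabla V(x_{PO}(t))\bigr\|^2\, dt.
\end{equation*}
Since the integrand is continuous and nonnegative and its integral over one period is zero, it must vanish pointwise: $\nabla V(x_{PO}(t)) = 0$ for all $t\in[0,T]$, hence for all $t$ by periodicity.

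There is no real obstacle; the only subtlety is the implicit appeal to smoothness of $V$ and to the existence of a well-defined period, both of which are built into the standing assumptions on $V$ and on the hyperbolic periodic orbit. The argument is the natural periodic-orbit analogue of the fixed-point Lemma preceding it, replacing the algebraic identity $-\nabla V(x_s)+b(x_s)=0$ with the integrated-over-a-period version $\oint \nabla V\cdot\dot{x}\,dt = 0$.
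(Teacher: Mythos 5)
Your proof is correct and follows essentially the same route as the paper: compute $\frac{d}{dt}V(x_{PO}(t))=-\|\nabla V(x_{PO}(t))\|^2\le 0$ and use periodicity to force the dissipation to vanish identically. The only cosmetic difference is that you integrate over one period while the paper argues directly that a nonincreasing periodic function has vanishing derivative; the two are interchangeable.
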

\begin{proof}
    Denote by $T$ the period. Consider $v(t)=V(x_{PO}(t))$. We have
    \[
        \dot{v}=\nabla V \cdot \dot{x}_{PO} = \nabla V \cdot (-\nabla V(x_{PO})+b(x_{PO})) = - \| \nabla V \|^2 \leq 0
    \]
    Since $v(t)=v(t+T)$, $\dot{v}=0$ for all $t$, and thus $\nabla V=0$.
\end{proof}

Moreover, attractions of $x_s$ and $x_{PO}$ lead to:
\begin{Lemma}
    $x_s$ is a local minimum of $V(x|x\in\mathcal{S})$.
\end{Lemma}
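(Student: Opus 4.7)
The plan is to combine three ingredients: forward invariance of the separatrix $\mathcal{S}$ under the deterministic flow \eqref{eq_orthoSystem_deterministic}, monotone decrease of $V$ along trajectories (by the same orthogonality computation used in Lemma \ref{thm_vanishing_dV}), and the assumed attractivity of $x_s$ on $\mathcal{S}$. Together these pin down $V(x_s)$ as the limiting value of $V$ along trajectories emanating from any nearby point of $\mathcal{S}$, giving the local minimum property.

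First I would note that $\mathcal{S}$ is invariant under \eqref{eq_orthoSystem_deterministic}: since $x_a$ and $x_b$ are Lyapunov stable, their basins $\mathcal{D}_a$ and $\mathcal{D}_b$ are open and forward invariant, so a trajectory starting on their common boundary $\mathcal{S}$ cannot enter either basin without violating uniqueness of solutions. Next, for any solution $x(t)$ of \eqref{eq_orthoSystem_deterministic}, orthogonality of $\nabla V$ and $b$ yields
\[
    \frac{d}{dt} V(x(t)) = \nabla V(x(t)) \cdot \bigl(-\nabla V(x(t)) + b(x(t))\bigr) = -\|\nabla V(x(t))\|^2 \leq 0,
\]
so $V$ is non-increasing along every trajectory.

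Now let $U \subset \mathcal{S}$ be the neighborhood of $x_s$ in $\mathcal{S}$ that lies in the stable manifold of $x_s$ (guaranteed by the attractivity hypothesis). For any $x \in U$, the trajectory $x(t)$ with $x(0)=x$ stays in $\mathcal{S}$ by invariance and satisfies $x(t) \to x_s$ as $t \to \infty$. Continuity of $V$ gives $V(x(t)) \to V(x_s)$, while monotonicity gives $V(x) = V(x(0)) \geq V(x(t))$ for all $t \geq 0$. Passing to the limit yields $V(x) \geq V(x_s)$ for every $x \in U$, which is exactly the claim.

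I do not anticipate any serious obstacle here; the argument is a textbook Lyapunov-function comparison, and every hypothesis is used exactly once. The one point that warrants a line of justification rather than a hand-wave is the forward invariance of $\mathcal{S}$, since nothing in the statement explicitly asserts it; the rest is a direct consequence of the orthogonality structure of \eqref{eq_orthoSystem_deterministic} already exploited in the previous lemma.
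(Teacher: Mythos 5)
Your argument is correct and is essentially the paper's own proof: both use the attractivity hypothesis to flow nearby points of $\mathcal{S}$ into $x_s$ and the orthogonality computation $\frac{d}{dt}V = -\|\nabla V\|^2 \le 0$ to conclude $V(x) \ge V(x_s)$. The only difference is that you spell out the forward invariance of $\mathcal{S}$, which the paper leaves implicit; that is a reasonable addition but not a change of method.
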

\begin{proof}
    By attraction assumption, there exists $r>0$ such that any point $x$ in $B(x_s,r) \cap \mathcal{S}$ approaches $x_s$ in system \eqref{eq_orthoSystem_deterministic}. As shown in the above proof, $V$ is a Lyapunov function of the dynamics, and therefore $V(x)\geq V(x_s)$. Hence $x_s$ is local minimum of $V$ restricted to $\mathcal{S}$.
\end{proof}
\begin{Lemma}
    For any $t$, $x_{PO}(t)$ is a local minimum of $V(x|x\in\mathcal{S})$.
    \label{thm_localMinimizingV}
\end{Lemma}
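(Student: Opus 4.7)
The plan is to mimic the argument used for the saddle point $x_s$ in the preceding lemma, upgraded to handle the fact that the invariant object is now a curve rather than a point. First I would invoke Lemma \ref{thm_vanishing_dV} to conclude that $V$ is constant along $x_{PO}$: because $\nabla V(x_{PO}(t))=0$ for all $t$, the value $V^\star := V(x_{PO}(t))$ does not depend on $t$. This reduces the task to showing $V(x)\geq V^\star$ for every $x$ in some neighborhood of the orbit within $\mathcal{S}$.

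Next I would use the attraction assumption on $\mathcal{S}$: there exists an open (in $\mathcal{S}$) neighborhood $U$ of $\{x_{PO}(t):t\in\mathbb{R}\}$ such that every trajectory of \eqref{eq_orthoSystem_deterministic} initiated in $U$ converges to the periodic orbit. The key point is that such a trajectory stays inside $\mathcal{S}$: since $\mathcal{D}_a$ and $\mathcal{D}_b$ are forward-invariant under \eqref{eq_orthoSystem_deterministic}, their common boundary $\mathcal{S}$ is forward-invariant too. So picking $x\in U$ and letting $X(\cdot)$ denote the trajectory with $X(0)=x$, we have $X(t)\in\mathcal{S}$ for all $t\geq 0$ and $\mathrm{dist}(X(t),x_{PO})\to 0$.

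The Lyapunov computation from the proof of Lemma \ref{thm_vanishing_dV} then closes the argument: $\frac{d}{dt}V(X(t)) = -\|\nabla V(X(t))\|^2 \leq 0$, so $V(X(t))$ is non-increasing in $t$. Combining non-increase with continuity of $V$ and the convergence $X(t)\to x_{PO}$, I get
\[
    V(x) = V(X(0)) \geq \lim_{t\to\infty} V(X(t)) = V^\star,
\]
and since $x\in U$ was arbitrary, every $x_{PO}(t_0)$ is a local minimum of $V|_\mathcal{S}$.

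The only step that needs real care is the assertion that the forward trajectory remains on $\mathcal{S}$; everything else is a direct repeat of the saddle-point argument together with the observation that $V$ is constant on the orbit. If one were worried about $\mathcal{S}$ being rough, one could alternatively take a sequence $x_n\to x_{PO}(t_0)$ with $x_n\in\mathcal{S}$ and chase their orbits, but the invariance of $\mathcal{S}$ under \eqref{eq_orthoSystem_deterministic} (inherited from the invariance of $\mathcal{D}_a$ and $\mathcal{D}_b$) makes this unnecessary and is, to my mind, the main conceptual ingredient beyond what was already done for $x_s$.
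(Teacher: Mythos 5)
Your proof is correct and follows the same route as the paper: the paper's proof of this lemma is literally ``analogous to the saddle point case,'' i.e., combine the attraction assumption with the Lyapunov property $\frac{d}{dt}V(X(t))=-\|\nabla V(X(t))\|^2\le 0$ and the constancy of $V$ on the orbit (Lemma \ref{thm_vanishing_dV}) to get $V(x)\ge V^\star$ near the orbit in $\mathcal{S}$. Your added remarks on the forward-invariance of $\mathcal{S}$ are a reasonable tightening of a point the paper leaves implicit, but they do not change the argument.
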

\begin{proof}
    Analogous to the above saddle point case.
\end{proof}

\subsection{The maximum likelihood transition}
\label{sec_orthogonal_MLP}
In orthogonal-type systems and conditioned on metastable transitions, a local minimizer of the Freidlin-Wentzell action functional can be explicitly obtained, and the corresponding action value is determined by the separatrix crossing location.

More specifically, view fixed point as a degenerate periodic orbit for simplicity, and consider two systems, respectively called uphill and downhill dynamics, given by
\begin{align*}
    &\dot{X}^*_1=+\nabla V(X^*_1)+b(X^*_1), \qquad X^*_1(-\infty)=x_a, \quad X^*_2(\infty)=x_s, \\
    &\dot{X}^*_2=-\nabla V(X^*_2)+b(X^*_2), \qquad X^*_2(-\infty)=x_s, \quad X^*_2(\infty)=x_b,
\end{align*}
where $x_s$ is an arbitrary point in the hyperbolic periodic orbit $\{ x_{PO}(t) \big| t\in \mathbb{R} \}$ (note Lemma \ref{thm_vanishing_dV} guarantees $\{ x_{PO}(t) | t\in \mathbb{R} \}$ is a periodic orbit of both the uphill and the downhill dynamics). The formal usage of boundary conditions at $\pm\infty$ means that $X^*_1$ and $X^*_2$ are heteroclinic orbits in respective systems. The uphill heteroclinic orbit was assumed to exist in Section \ref{sec_orthoSystem_system}. The downhill heteroclinic orbit exists because $x_s$ is in the separatrix $\mathcal{S}$, which is the boundary of the attraction basin of $x_b$ in \eqref{eq_orthoSystem_deterministic}.

Natural consequences of Freidlin and Wentzell's results (Chap 3 in \cite{FrWe12}) are: (i) the concatenation of these two heteroclinic orbits will give in state space the geometric configuration of an action local minimizer, and (ii) once this configuration is known, a local MLP can be obtained by reconstructing the time parameterization of the path, via the fulfillment of $\|X'\|=\|f(X)\|=\sqrt{\|\nabla V(X)\|^2+\|b(X)\|^2}$.

\medskip
To make these claims more precise, let's first recall the definition of quasipotential:
\[
	U(x_a,x_b):=\inf_{T_1,T_2} \inf_{X\in \bar{\mathcal{C}}_{x_a}^{x_b}(T_1,T_2)} S_{T_1,T_2}[X].
\]
A great observation was made in \cite{HeVa08a, VaHe08b, HeVa08c} (see also page 102 in \cite{FrWe12}) that the quasipotential problem can be converted to an equivalent but simpler problem that focuses on the geometry of the minimizer:
\begin{Lemma}[Geometric minimum action]
	\begin{equation}
		U(x_a,x_b)=\inf_{X\in \bar{\mathcal{C}}_{x_a}^{x_b}(0,1)} \hat{S} [X],
    	\label{eq_gMAMminimization}
	\end{equation}
	where the geometric action is defined as
	\[
	    \hat{S}_{T_1,T_2}[X]=\int_{T_1}^{T_2} \big( \|X'\|\|f(X)\|-\langle X',f(X) \rangle \big) ds,
	\]
	and $\hat{S}$ is the short hand for $\hat{S}_{0,1}$.
	\label{thm_geometricAction}
\end{Lemma}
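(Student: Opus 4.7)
The plan is to prove the equality by a pointwise Cauchy--Schwarz-type bound in one direction and by an explicit optimal time-reparameterization in the other. First, for any absolutely continuous $X:[T_1,T_2]\to\mathbb R^d$ with $X(T_1)=x_a$, $X(T_2)=x_b$, I would expand the Freidlin--Wentzell integrand as
\[
    \tfrac{1}{2}\|\dot X-f(X)\|^2 \;=\; \tfrac{1}{2}\|\dot X\|^2 + \tfrac{1}{2}\|f(X)\|^2 - \langle \dot X, f(X)\rangle,
\]
and apply the AM--GM inequality $\tfrac12(\|\dot X\|^2+\|f(X)\|^2)\ge \|\dot X\|\,\|f(X)\|$, with equality iff $\|\dot X\|=\|f(X)\|$. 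This yields $S_{T_1,T_2}[X]\ge \hat S_{T_1,T_2}[X]$ pointwise in the path, giving one inequality for the corresponding infima after taking $\inf$ over $T_1,T_2$.

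Next I would observe that $\hat S$ is invariant under orientation-preserving reparameterizations of time, because its integrand $\|\dot X\|\,\|f(X)\|-\langle \dot X,f(X)\rangle$ is positively homogeneous of degree one in $\dot X$. Consequently $\inf_{T_1<T_2}\inf_X \hat S_{T_1,T_2}[X] = \inf_{X\in\bar{\mathcal{C}}_{x_a}^{x_b}(0,1)} \hat S[X]$, so restricting to the canonical parameter interval $[0,1]$ loses nothing on the geometric side. Combining this with the previous paragraph gives $U(x_a,x_b)\ge \inf_X \hat S[X]$.

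For the reverse inequality, given a smooth geometric path $X:[0,1]\to\mathbb R^d$ joining $x_a$ to $x_b$, I would construct a time-reparameterization $\tilde X(t)=X(\tau(t))$ that saturates AM--GM, i.e.\ satisfies $\|\dot{\tilde X}\|=\|f(\tilde X)\|$. This forces $\tau'(t)=\|f(X(\tau))\|/\|X'(\tau)\|$, which is precisely the time parameterization invoked in the statement $\|X'\|=\|f(X)\|$ just before the lemma. For such $\tilde X$ one has $S_{T_1,T_2}[\tilde X]=\hat S_{0,1}[X]$ by equality in AM--GM and reparameterization invariance, hence $U(x_a,x_b)\le \hat S[X]$ and, taking infima, the matching inequality follows.

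The main obstacle is the endpoint behavior: since $x_a,x_b$ are stable fixed points, $f(x_a)=f(x_b)=0$, so $\tau'(t)\to 0$ at the endpoints and the reparameterized trajectory requires $T_1\to -\infty$, $T_2\to +\infty$. The construction of $\tilde X$ is therefore not well defined on any finite interval joining the exact endpoints, and one must argue by truncation: cut the geometric path at small spheres around $x_a$ and $x_b$, reparameterize the truncated middle piece on a finite interval to achieve $S=\hat S$ there, and then control the leftover contributions by noting that $\hat S$ has integrand $O(\|X'\|\,\|f(X)\|)$ which is small near the fixed points, while the inf over $T_1,T_2$ in the definition of $U$ permits the length of those boundary layers to be chosen freely. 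A parallel care is needed on any subarc where $X'$ vanishes but $f(X)\ne 0$, which is handled by reparameterizing around such points. Once these approximation estimates are in place, combining the two inequalities gives the claimed equality.
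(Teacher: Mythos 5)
Your proposal is correct and follows essentially the same route as the paper's own justification (given only as a remark citing the gMAM papers and Freidlin--Wentzell): the AM--GM lower bound $S\ge\hat S$, saturation by rescaling time so that $\|X'\|=\|f(X)\|$, and reparameterization invariance of $\hat S$ to fix the interval to $[0,1]$. You go further than the paper's sketch in explicitly handling the degeneracy at the endpoints where $f$ vanishes (truncation near $x_a,x_b$ and the necessity of $T_2-T_1\to\infty$), which is the step the paper defers to the references; that added care is appropriate and the argument is sound.
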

\begin{Remark}
A recap of the main rationale is the following. It is easy to see $S_{T_1,T_2}[X] \geq \hat{S}_{T_1,T_2}[X]$ by Cauchy-Schwarz, but in fact one further has $\inf_{T_1,T_2,X} S_{T_1,T_2}[X]=\inf_{T_1,T_2,X} \hat{S}_{T_1,T_2}[X]$, because time can always be rescaled such that $\|X'(s)\|=\|f(X)\|$, and then $\frac{1}{2}\|\dot{X}(s)-f(X(s)) \|^2=\|X'\|\|f(X)\|-\langle X',f(X) \rangle$. Moreover, it can be seen by chain rule that $\hat{S}_{T_1,T_2}[X]$ does not depend on the time parametrization of the path or detailed values of $T_1,T_2$, and hence $\inf_{T_1,T_2,X} \hat{S}_{T_1,T_2}[X]= \inf_X \hat{S}_{0,1}[X]$.
\end{Remark}

\begin{Theorem}
    Given a point $x_s$ in a hyperbolic periodic orbit that is attracting on the separatrix $\mathcal{S}$, there is an associated local MLP with action value $2(V(x_s)-V(x_a))$, and it corresponds to the concatenation of $X_1^*$ and $X_2^*$.
    \label{thm_action}
\end{Theorem}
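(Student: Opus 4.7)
The plan is to verify the concatenation achieves the claimed action value by direct computation, and then to match it with a lower bound over all competitor paths in a suitable neighborhood, via an orthogonal-type reformulation of the Freidlin--Wentzell action.

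First I would directly evaluate $S_\infty$ on the concatenation of $X_1^*$ and $X_2^*$. On $X_2^*$, the integrand $\|\dot{X}_2^*-f(X_2^*)\|^2\equiv 0$ since $X_2^*$ is a deterministic orbit of $f$, so this piece contributes nothing. On $X_1^*$, $\dot{X}_1^*-f(X_1^*)=2\nabla V(X_1^*)$, so the integrand equals $2\|\nabla V(X_1^*)\|^2$. Using $\nabla V\cdot b=0$, I obtain $\frac{d}{dt}V(X_1^*)=\nabla V\cdot(\nabla V+b)=\|\nabla V\|^2$, and integrating along the heteroclinic from $x_a$ to $x_s$ yields $2(V(x_s)-V(x_a))$.

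Next I would establish the key orthogonal-type identity: for any absolutely continuous path $X$, with $\tilde{f}:=\nabla V+b$ the uphill field, $\|\dot{X}-f(X)\|^2=\|\dot{X}-\tilde{f}(X)\|^2+4\nabla V(X)\cdot\dot{X}$. This uses $\|f\|^2=\|\tilde{f}\|^2=\|\nabla V\|^2+\|b\|^2$, again by orthogonality. Integrating gives $S_{T_1,T_2}[X]=\tfrac{1}{2}\int_{T_1}^{T_2}\|\dot{X}-\tilde{f}(X)\|^2\,ds+2\bigl(V(X(T_2))-V(X(T_1))\bigr)$, so for any path with initial point $x_a$ and terminal point $y_*$, $S[X]\geq 2(V(y_*)-V(x_a))$, with equality iff $X$ is a time-parametrization of an uphill trajectory.

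Using this, for any competitor $Y$ from $x_a$ to $x_b$ that stays in a suitable neighborhood of the reference path, let $t_*$ be the first time $Y$ hits the separatrix $\mathcal{S}$ and set $y_*=Y(t_*)$. Applying the identity to $Y$ restricted to $(-\infty,t_*]$ and noting $S\geq 0$ on the remainder gives $S[Y]\geq 2(V(y_*)-V(x_a))$. Since $Y$ is close to the reference path, $y_*$ lies in a neighborhood of $\{x_{PO}(t)\}$ on $\mathcal{S}$, and Lemma \ref{thm_localMinimizingV} (or its saddle analogue) provides $V(y_*)\geq V(x_s)$. Hence $S[Y]\geq 2(V(x_s)-V(x_a))$, showing the concatenation is a local minimizer with the asserted action value; equality is saturated precisely by taking the uphill portion up to $\mathcal{S}$ and then the deterministic downhill orbit.

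The main obstacle, which I would address last, is the ``local'' qualifier together with the infinite time horizons: $X_2^*$ is a heteroclinic, and in the periodic-orbit case $X_1^*$ may spiral as it approaches the limit set. Working with the geometric action of Lemma \ref{thm_geometricAction} removes the time-parametrization issue for the upper bound; for the lower bound one has to justify that any competitor in a tubular neighborhood of the image of the concatenation does indeed cross $\mathcal{S}$ first near $\{x_{PO}(t)\}$, so that the local minimality of $V|_{\mathcal{S}}$ at the orbit can be invoked. These are geometric bookkeeping matters and do not affect the core computation above.
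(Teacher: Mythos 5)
Your proposal is correct and takes essentially the same route as the paper: a lower bound $S\geq 2(V(y_*)-V(x_a))$ for any path reaching the separatrix at $y_*$ (which the paper cites from Freidlin--Wentzell, p.~100, and you rederive explicitly via the orthogonal identity $\|\dot X-f\|^2=\|\dot X-\tilde f\|^2+4\nabla V\cdot\dot X$), zero cost for the downhill piece, and local minimality of $V|_{\mathcal{S}}$ at the orbit from Lemma \ref{thm_localMinimizingV} to close the argument. The only presentational difference is that you phrase locality via a first-hitting-time argument on competitor paths in a tube, whereas the paper phrases it as local minimality of $x_c\mapsto U(x_a,x_c)+U(x_c,x_b)$ over $x_c\in\mathcal{S}$ after splitting the geometric action at the separatrix.
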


\begin{proof}
	Since any path connecting $x_a$ with $x_b$ must cross the separatrix $\mathcal{S}$, Lemma \ref{thm_geometricAction} and that the geometric action $\hat{S}$ is invariant under reparameterization lead to
	\begin{align*}
		&U(x_a,x_b)=\inf_{X\in \bar{\mathcal{C}}_{x_a}^{x_b}(0,1)} \hat{S} [X]
		=\inf_{x_c \in \mathcal{S}, X\in \bar{\mathcal{C}}_(0,1): X(0)=x_a, X(\frac{1}{2})=x_c, X(1)=x_b} \hat{S} [X] \\
		&\quad =\inf_{x_c \in \mathcal{S}} \left( \inf_{X_1\in \bar{\mathcal{C}}_(0,\frac{1}{2}): X_1(0)=x_a, X_1(\frac{1}{2})=x_c} \hat{S}_{0,\frac{1}{2}} [X_1] + \inf_{X_2\in \bar{\mathcal{C}}_(\frac{1}{2},1): X_2(\frac{1}{2})=x_c, X_2(1)=x_b} \hat{S}_{\frac{1}{2},1} [X_2] \right) \\
		&\quad =\inf_{x_c \in \mathcal{S}} \big(U(x_a,x_c)+U(x_c,x_b)\big).
	\end{align*}
	Therefore, it suffices to show that $x_c=x_s$ is a local minimum of $U(x_a,x_c)+U(x_c,x_b)$.

    It is easy to see that $U(x_s,x_b)=0$, because $X_2^*$ by definition is the zero (and hence the minimizer) of
    \[
        \frac{1}{2} \int_{-\infty}^\infty \|\dot{X}-( -\nabla V(X)+b(X) )\|^2 ds.
    \]
    Furthermore, it was proved in \cite{FrWe12} page 100 that the same action functional under the constraints of $X(-\infty)=x_a$ and $X(\infty)=x_c$ was bounded from below by $2(V(x_c)-V(x_a))$, and that $X_1^*$ corresponds to the action value of $2(V(x_s)-V(x_a))$.
    
    Since $\nabla V(x_s)=0$ due to Lemma \ref{thm_vanishing_dV}, $x_s$ is a critical point of $V$. Moreover, Lemma \ref{thm_localMinimizingV} shows $x_s$ is a local minimum of $V(x_c \big| x_c\in\mathcal{S})$. Therefore, there is an open ball $\mathcal{B}(x_s,\eta)$ such that $\inf_{x_c \in \mathcal{B}(x_s,\eta) \cap \mathcal{S} } U(x_a,x_c) = 2(V(x_s)-V(x_a))$, and the infimum is attained by $X_1^*$.

    Therefore, $X_1^*$ and $X_2^*$ together attain a local minimum of $U(x_a,x_c)+U(x_c,x_b)$ where $x_c\in \mathcal{S}$ is the variable.
\end{proof}

\begin{Remark}
    Different choices of $x_s \in \{x_{PO}(t) \big| t\in\mathbb{R} \}$ on the periodic orbit lead to the same action value (although not the same path), because Lemma \ref{thm_vanishing_dV} ensures that $V$ is constant along the periodic orbit.
\end{Remark}

\begin{Remark}
	The operation of `concatenation' could be made precise. For instance, one could let $\hat{X}:[0,1]\rightarrow\mathbb{R}^d$ be
	\[
		\hat{X}(t)=\begin{cases} 
			X_1^*(\tan \left(2\pi t-\frac{\pi}{2})\right), \qquad & t\in \left[ 0,\frac{1}{2} \right] \\
			X_2^*(\tan \left(2\pi t-\frac{3\pi}{2})\right), \qquad & t\in \left[ \frac{1}{2},1 \right]
		\end{cases}.
	\]
	$\hat{X}$ will be a local minimizer of $\hat{S}$. A path $X^*:(-\infty,\infty)\rightarrow \mathbb{R}^d$ that locally minimizes the original action $S$ can then be obtained via $X^*(\tau)=\hat{X}(t)$, where $\frac{dt}{d\tau}=\frac{\|f(\hat{X})\|}{\|\hat{X}'\|}$. \end{Remark}

\subsection{Numerical challenges in computing the heteroclinic orbits}
\label{sec_orthoSystem_numerics}
Assuming the separatrix crossing location is known, mathematically it suffices to find the uphill and downhill heteroclinic orbits $X_1^*$ and $X_2^*$ for identifying the local MLP. However, the numerical computations of $X_1^*$ and $X_2^*$ are nontrivial.

Downhill dynamics is relatively easy to obtain. If one manages to obtain a perturbation $x^+$ of $x_s$ such that $x^+\in \mathcal{D}_b^\circ$ and $\|x^+-x_s\| < \epsilon$ for some small $\epsilon$, then downhill orbit can be well approximated by the initial value problem
\[
    \dot{X_2}=-\nabla V(X_2)+b(X_2), \qquad X_2(-T)=x^+.
\]
for some large $T$. This is because $x_b$ is an attractor in this dynamical system.

Uphill dynamics can be more difficult to obtain. It is possible (see for example Section \ref{sec_AC1D}) that $-\nabla V$ has a nonzero projection onto the stable subspace at the separatrix crossing location, corresponding to a stable direction of the periodic orbit in the original dynamics. In this situation, the periodic orbit will become unstable in the uphill dynamical system $\dot{X_1}=\nabla V(X_1)+b(X_1)$ restricted to the separatrix. Consequently, an accurate identification of the heteroclinic orbit between $x_a$ and the periodic orbit in the uphill system is nontrivial due to instability.

The numerical approximation of an unstable heteroclinic orbit is part of an active research field (e.g., \cite{FrDo91,BeKl97,BrOsVe97,DeDiFr00,DiRe04,KrOs05,HuZo12}). The approach we employ will be based on gMAM (Section \ref{sec_results_gMAMextend} and Appendix \ref{sec_review_gMAM}). gMAM is advantageous in our context of transition through periodic orbit, because it is based on action minimization and thus good at ignoring parts with large arclengths that contribute little to the action.

\subsection{Generality of orthogonal-type systems}
To write $\dot{X}=f(X)$ as an orthogonal-type system, it is easy to see $V$ needs to satisfy a PDE
\begin{equation}
    (f(x)+\nabla V(x)) \cdot \nabla V(x) = 0.
    \label{eq_decompositionPDE}
\end{equation}
Depending on $f(\cdot)$, this PDE may admit only trivial solution or multiple nontrivial solutions. For example, when $f(q,p)=(p,-q)$ (i.e., Hamiltonian system of harmonic oscillator), the only solution is $V \equiv \text{constant}$ (see Appendix \ref{sec_orthoHamiltonian} for a proof); however, when $f(x_1,x_2)=(-x_1,-x_2)$, $V(x_1,x_2)$ can at least be $V=\text{constant}$, $V=x_1^2/2+x_2^2/2$, or $V=(x_1-x_2)^2/4$. Analysis and numerical solution of this PDE in 2D are discussed in depth in \cite{cameron2012finding}.

Nevertheless, not all decompositions satisfy assumptions in Section \ref{sec_orthoSystem_system}. In particular, one can always decompose an arbitrary $f(x)$ by picking $b(x)=f(x), V(x)=0$. However, this trivial decomposition will not satisfy the assumption on the existence of an uphill heteroclinic orbit in $\dot{X}=\nabla V(X)+b(X)$.

Besides, \eqref{eq_decompositionPDE} may not be easy to solve in high dimensional cases. For instance, we were not able to verify via \eqref{eq_decompositionPDE} if the example in Section \ref{sec_AC2D_intro} is of orthogonal-type (note there $x$ is in an infinite dimensional function space); instead, we will employ an indirect approach to obtain numerical evidence that it is not (Section \ref{sec_AC2D}).

\section{Transitions in nongradient systems: case studies}
\label{sec_examples}
Many experiences and tools for gradient systems do not generalize to nongradient ones without adaptation. For instance, String method in its classical form \cite{StringMethod,StringSimplified}, which is designed and works beautifully for gradient systems, does not directly apply to examples in this section; nevertheless, it can be adapted to provide critical information about nongradient metastable transitions (Section \ref{sec_pString}). Let us first illustrate several features of metastable transitions not seen in gradient-systems.
\subsection{2D SDE system}
\label{sec_examples_2D}
This is an example for which String method in its classical form \cite{StringMethod,StringSimplified} converges, but does not produce the MLP.

\paragraph{The system.}
Consider a simple system with no periodic orbit:
\begin{equation}
\begin{cases}
    d r= (1-z^2-r) dt + \sqrt{\epsilon} dW_1 \\
    d z= (z-z^3) dt + \sqrt{\epsilon} dW_2
\end{cases}
\label{eq_system2D}
\end{equation}
When noise is absent ($\epsilon=0$), there are 3 fixed points:
$r=0,z=-1$: sink;
$r=0,z=1$: sink;
$r=1,z=0$: saddle.
Also, the separatrix is $z=0$.

This system is nongradient. On the other hand, it is of orthogonal-type, and
\[
    V(r,z)=(1-z^2)^2/4, \qquad b(r,z)=(1-z^2-r,0)
\]
satisfies all assumptions in Section \ref{sec_orthoSystem_system}.

\paragraph{The transition.}
When applied to nongradient systems with general form $dx=f(x) dt+\sqrt{\epsilon} dW$, String method seeks path $\phi(\alpha)$ that satisfies $\phi_\alpha \parallel f(\phi)$. Figure \ref{fig_2Dstring} illustrates the path from $(-1,0)$ to $(1,0)$ obtained by String method.

This path, however, does not correspond to the minimizer of the action (i.e., maximizer of transition rate). Figure \ref{fig_2DgMAM} illustrates the MLP numerically obtained by gMAM. By Theorem \ref{thm_action}, the exact minimum action is $2(V(1,0)-V(-1,0))=0.5$, and gMAM result is an accurate numerical approximation.

\begin{figure}[h]
\centering
\footnotesize
\subfigure[Path obtained by String method.]{
\includegraphics[width=0.48\textwidth]{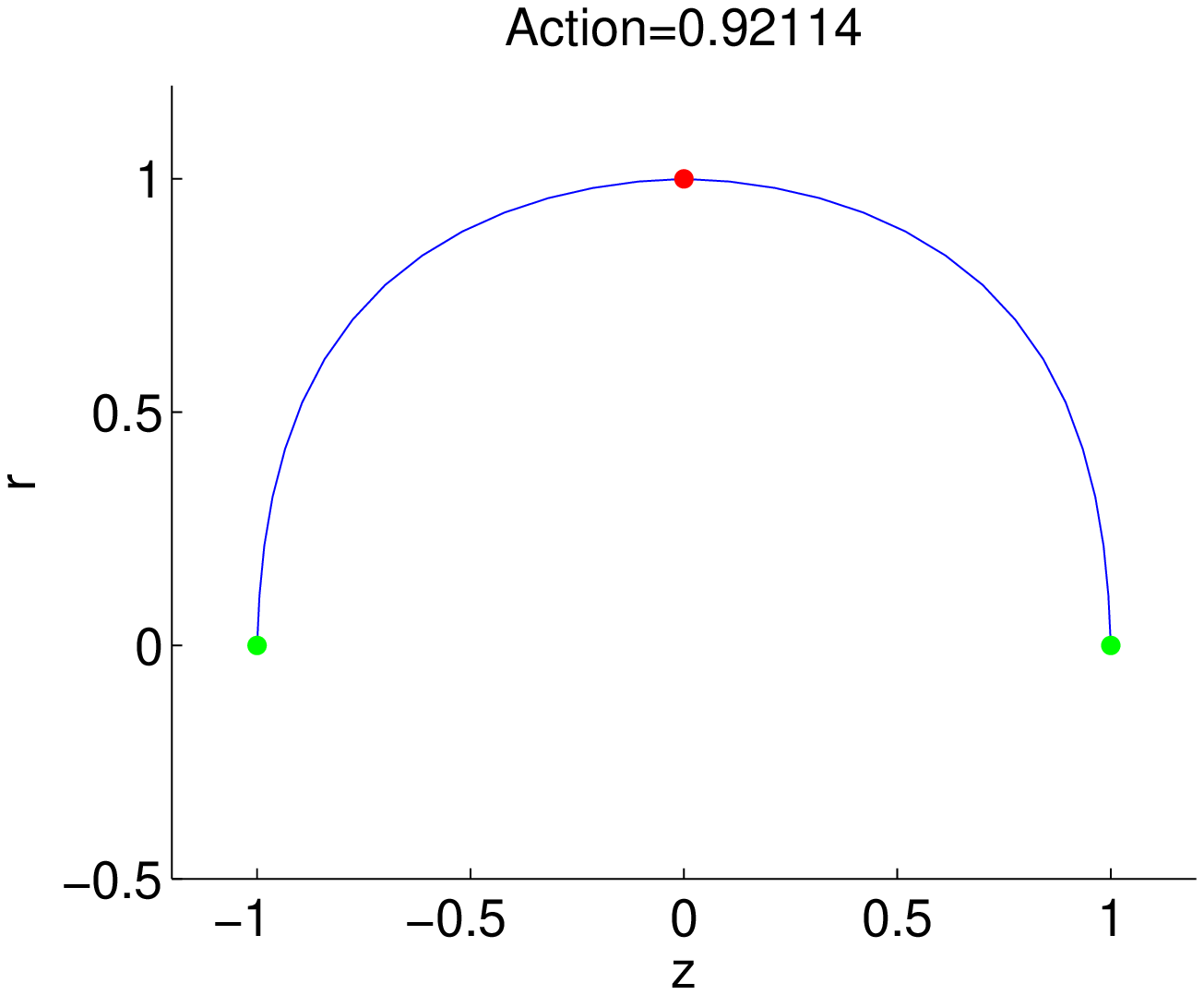}
\label{fig_2Dstring}
}
\subfigure[MLP obtained by gMAM.]{
\includegraphics[width=0.48\textwidth]{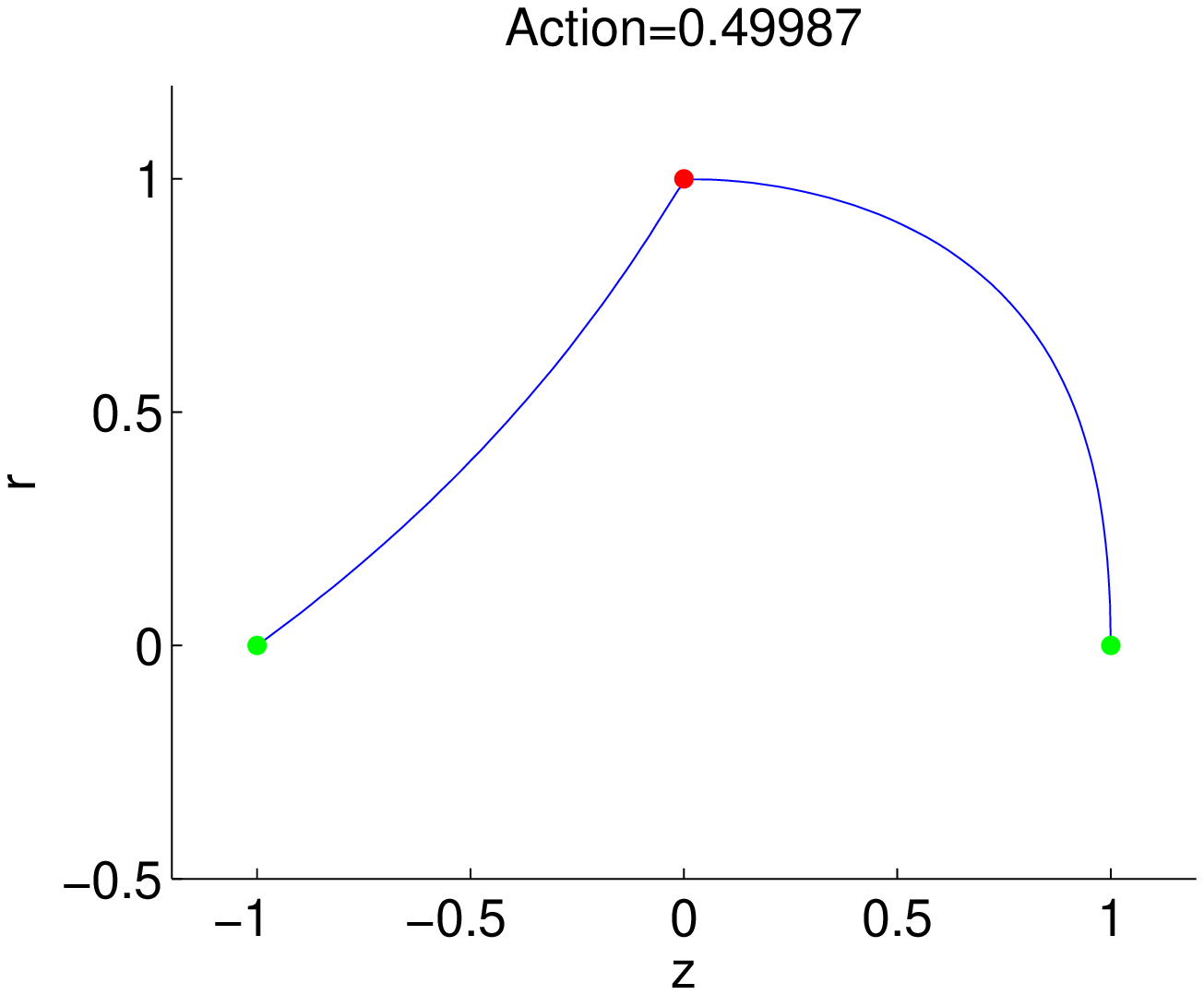}
\label{fig_2DgMAM}
}
\caption{\footnotesize Transitions from (-1,0) to (1,0) in 2D SDE system \eqref{eq_system2D}.}
\end{figure}

%

\subsection{3D SDE system (rotationally-symmetric)}
This is an example for which String method in its classical form \cite{StringMethod,StringSimplified} does not converge. The system contains no saddle point, and an MLP provably crosses a periodic orbit (provable because the system is of orthogonal-type). There are infinitely many MLPs and each is of an infinite length.
\label{sec_examples_3D}
\paragraph{The system.}
Consider
\begin{equation}
\begin{cases}
    dx = \left( (1-z^2) \frac{x}{\sqrt{x^2+y^2}} - x - y \right) dt + \sqrt{\epsilon} dW_1 \\
    dy = \left( (1-z^2) \frac{y}{\sqrt{x^2+y^2}} - y + x \right) dt + \sqrt{\epsilon} dW_2 \\
    dz = \left( z-z^3 \right) dt + \sqrt{\epsilon} dW_3
\end{cases}
\label{eq_system3D}
\end{equation}
When noise is absent ($\epsilon=0$), the system can be rewritten in cylindrical coordinates:
\begin{equation}
\begin{cases}
    \dot{r}=1-z^2-r \\
    \dot{\theta}=1 \\
    \dot{z}=z-z^3
\end{cases}
\label{eq_system3Dcylindrical}
\end{equation}
and recognized as a rotated version of the 2D system \eqref{eq_system2D} without noise. It contains the following limit sets:
\begin{itemize}
\item
    $x=0,y=0,z=-1$: attracting fixed point.
\item
    $x=0,y=0,z=1$: attracting fixed point.
\item
    $z=0,x^2+y^2=1$: periodic orbit, on which $\dot{\theta}=1$; it is saddle-like (i.e. hyperbolic) because it is unstable along $z$ direction but stable in $z=0$ plane.
\end{itemize}
Again, its separatrix is $z=0$. The system is nongradient, of orthogonal-type, with
\begin{align*}
    & V(x,y,z) = (1-z^2)^2/4 \\
    & b(x,y,z) = \left( (1-z^2) \frac{x}{\sqrt{x^2+y^2}} - x - y, ~ (1-z^2) \frac{y}{\sqrt{x^2+y^2}} - y + x, ~ 0 \right).
\end{align*}

\paragraph{MLPs.}
There is a periodic orbit on the separatrix but no fixed point, and this clearly contrasts with gradient systems which do not have periodic orbits. By Theorem \ref{thm_action}, an MLP in this system is the concatenation of two heteroclinic orbits connecting stable points and the periodic orbit. Because the $\dot{\theta}=1$ rotation decouples with the $r, z$ dynamics (see eq. \ref{eq_system3Dcylindrical}), given the MLP $r(t),z(t)$ in the previous 2D example (Section \ref{sec_examples_2D}), a MLP in this system is given by
\[
    x(t)=r(t)\cos(t+\theta_0),\quad y(t)=r(t)\sin(t+\theta_0), \quad z(t) ,
\]
where $\theta_0$ is an arbitrary constant. Because of $\theta_0$, MLPs are not unique.

\begin{figure}[h]
\centering
\footnotesize
\subfigure{
\includegraphics[width=0.48\textwidth]{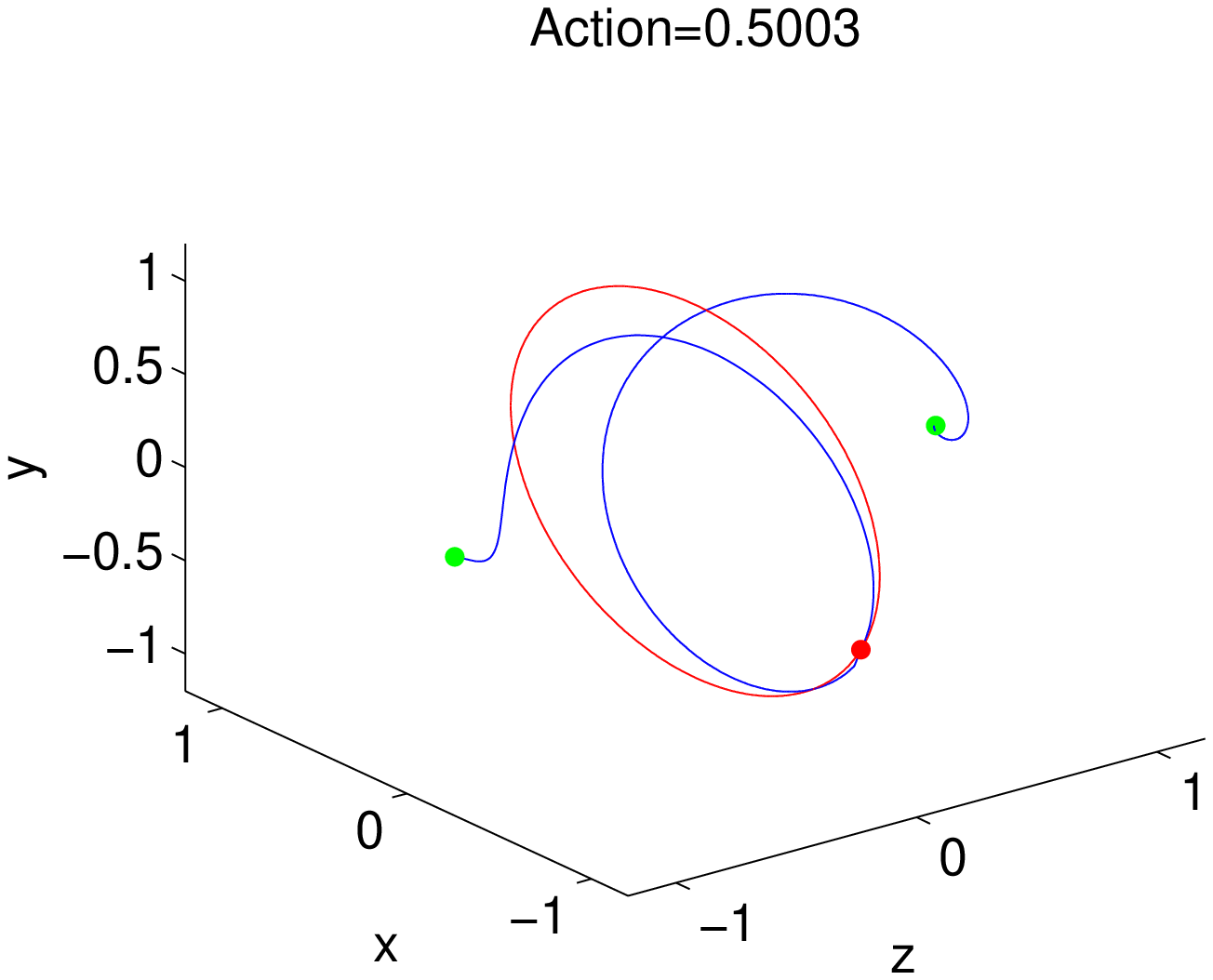}
}
\subfigure{
\includegraphics[width=0.48\textwidth]{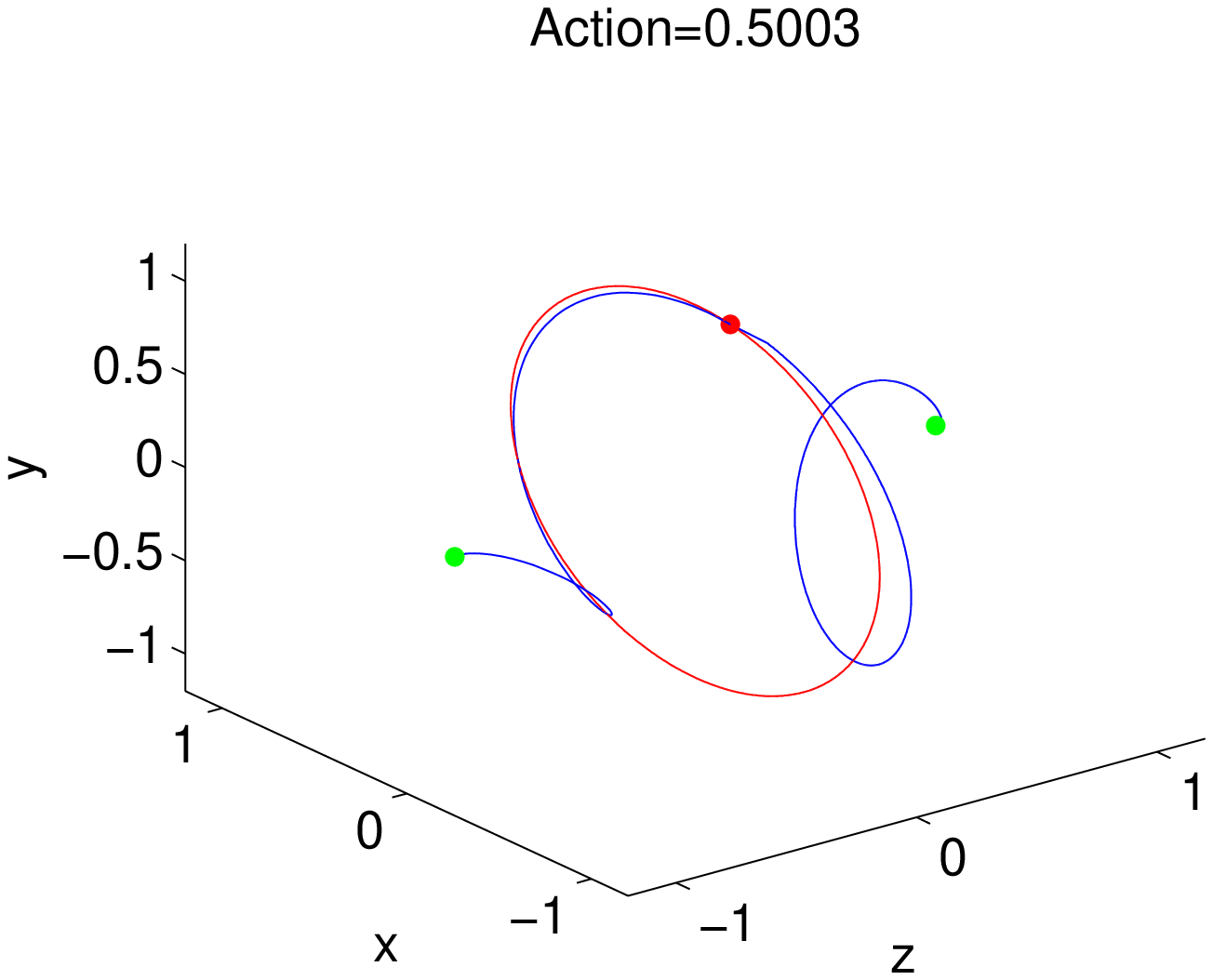}
}
\caption{\footnotesize Finite length approximations of MLPs between (-1,0,0) and (1,0,0) in 3D SDE system \eqref{eq_system3D}. Computed by up-down gMAM (described in Section \ref{sec_results_gMAMextend}) with different intersections at the separatrix.}
\label{fig_3DgMAM}
\end{figure}

Note the length of this path is infinite. This is because uphill/downhill dynamics take infinite time to reach/leave the separatrix ($z=0$). Since angular velocity is nonzero constant, infinite rotations occur. Because the rotation radius is approximately $r=1$ near the separatrix, infinite rotations lead to an infinite arc-length.

Such an infinite winding can cause numerical problems. In fact, one may be attempted to use String method to obtain the downhill dynamics, but this will not work well because the string will wind more and more around the limit cycle, and any finite discretization of the String will eventually become insufficient and start consuming arc-lengths from the parts away from the separatrix.

gMAM, on the other hand, suits to find a reasonable finite-length approximation of an MLP. This is because the infinite winding near the separatrix takes significant physical time, but contributes little to the action. Since gMAM minimizes a geometrized action that is independent of the time parametrization of the path, when minimizing in a space of finitely-discretized paths, most of the infinite winding can be approximated by a finite-length segment without increasing the action too much.

Figure \ref{fig_3DgMAM} illustrates two MLPs approximated by a variant of gMAM (Section \ref{sec_results}). The exact minimum action is $2(V(\cos\theta,\sin\theta,0)-V(0,0,-1))=0.5$, due to Theorem \ref{thm_action}. The numerical MLPs are, of course, only finite length approximations.

\subsection{3D SDE system (no rotational symmetry)}
This example drops the rotational symmetry of the previous example but remains orthogonal-type. There is still a hyperbolic periodic orbit and no saddle. MLPs again utilize the periodic-orbit and share features similar to the previous example.
\label{sec_examples_3D2}
\paragraph{The system.}
Consider
\begin{equation}
\begin{cases}
    dx = \left( -(z+1)(z-2)\frac{x}{(x^4+y^4)^{1/4}}-x-y^3 \right) dt + \sqrt{\epsilon} dW_1 \\
    dy = \left( -(z+1)(z-2)\frac{y}{(x^4+y^4)^{1/4}}+x^3-y \right) dt + \sqrt{\epsilon} dW_2 \\
    dz = \left( -(z+1)(z-2)z \right) dt + \sqrt{\epsilon} dW_3
\end{cases}
\label{eq_system3D2}
\end{equation}
When noise is absent ($\epsilon=0$), the system contains the following limit sets:
\begin{itemize}
\item
    $x=0,y=0,z=-1$: attracting fixed point.
\item
    $x=0,y=0,z=2$: attracting fixed point.
\item
    $z=0, x^4+y^4=16$: periodic orbit, on which $\dot{x}=-y^3$ and $\dot{y}=x^3$; it is saddle-like because it is unstable along $z$ direction but stable in $z=0$ plane.
\end{itemize}
The separatrix is $z=0$. This nongradient system is again of orthogonal-type, with
\begin{align*}
    & V(x,y,z) = z^4/4-z^3/3-z^2 \\
    & b(x,y,z) = \left( -(z+1)(z-2)\frac{x}{(x^4+y^4)^{1/4}}-x-y^3, ~ -(z+1)(z-2)\frac{y}{(x^4+y^4)^{1/4}}+x^3-y, ~ 0 \right).
\end{align*}

\paragraph{MLPs.}
An MLP is again the concatenation of two heteroclinic orbits. Although there is no longer a rotational symmetry, MLP is still not unique. See Figure \ref{fig_3D2gMAM} for two numerically approximated MLPs. Note the exact minimum action is $2(V(2,0,0)-V(0,0,-1)) = 5/6 \approx 0.8333$, and the gMAM approximations can be improved by using more discretization points. Again, a true MLP is of infinite length due to infinite winding near the periodic orbit.

\begin{figure}[h]
\centering
\footnotesize
\subfigure{
\includegraphics[width=0.48\textwidth]{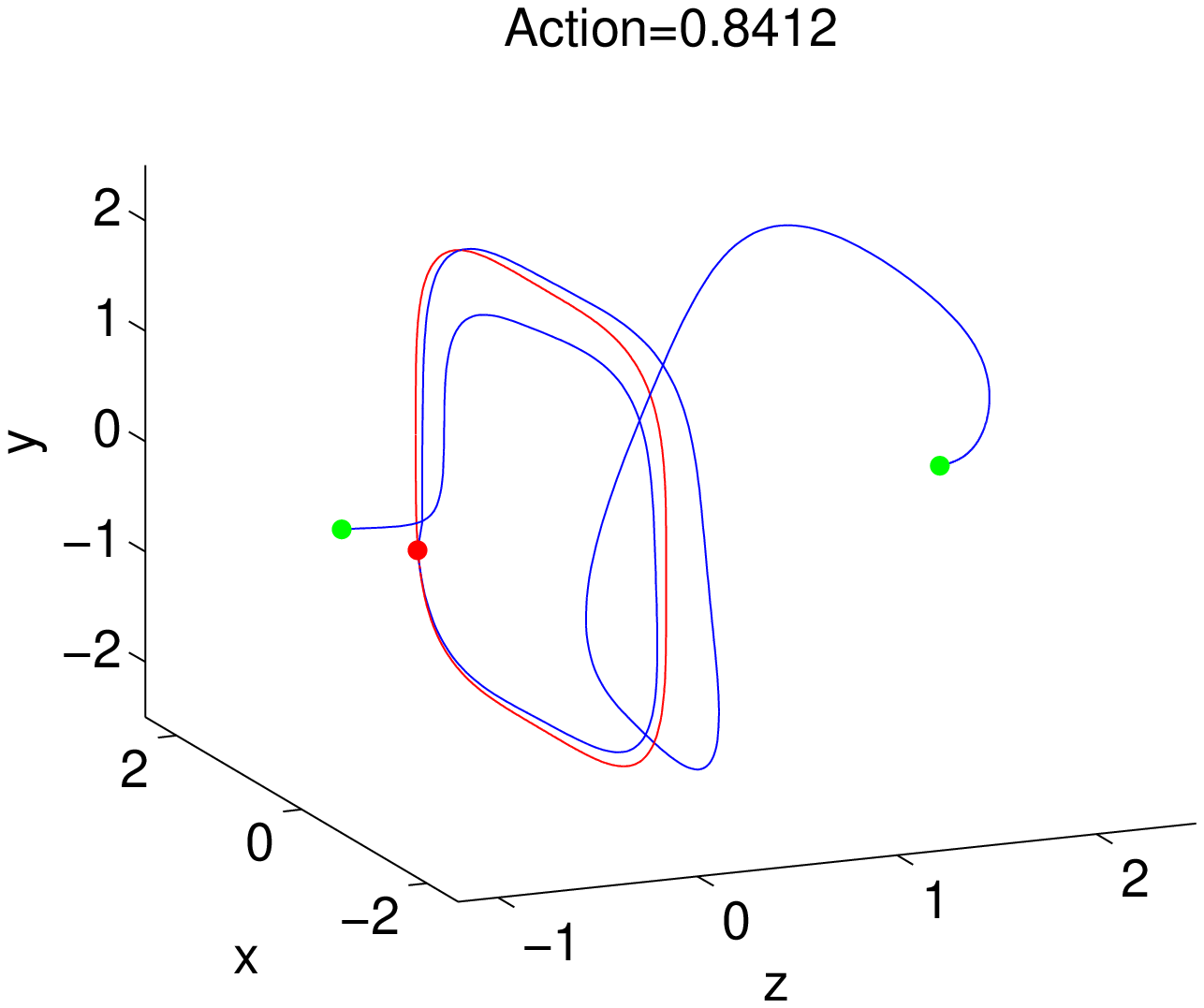}
}
\subfigure{
\includegraphics[width=0.48\textwidth]{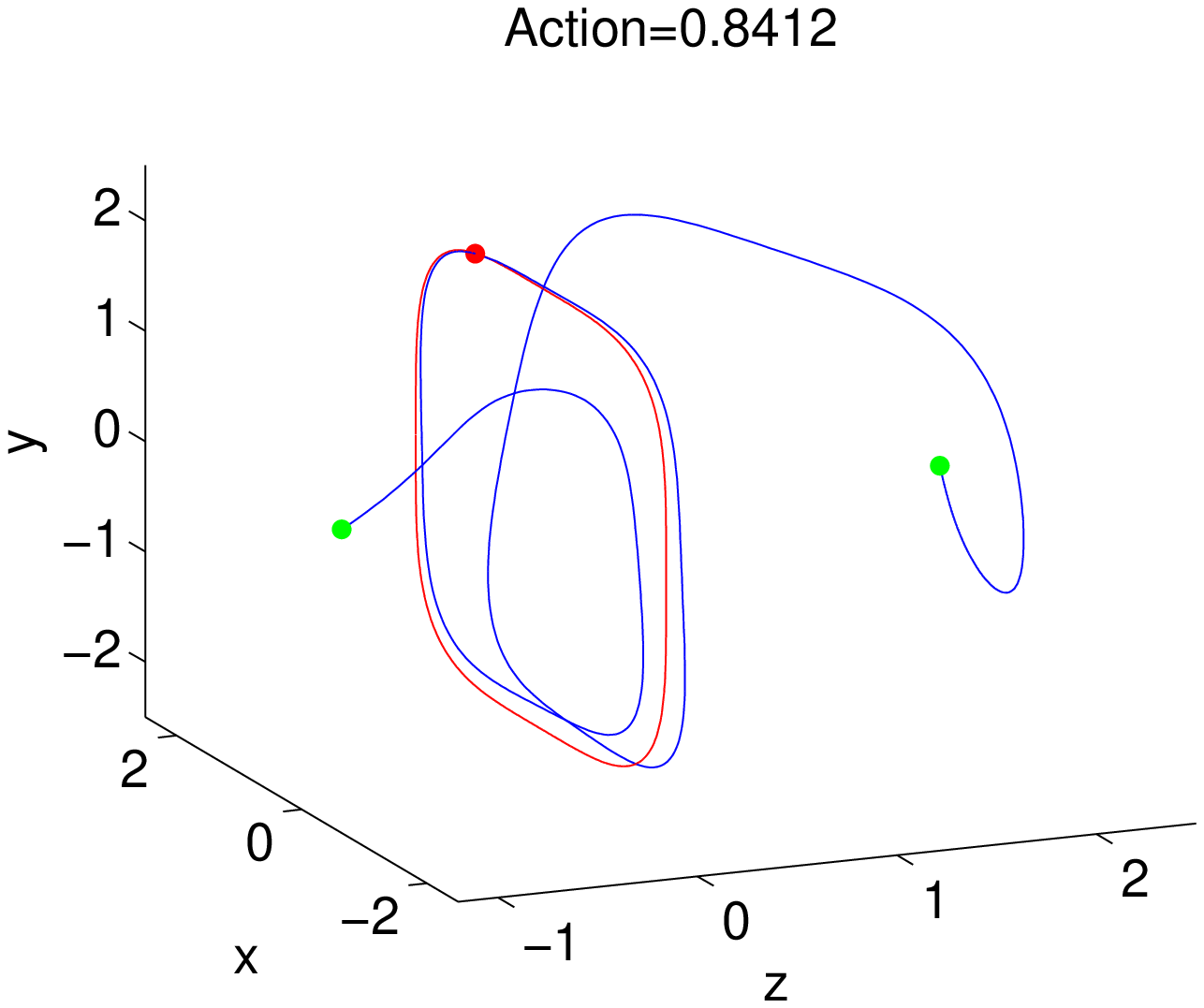}
}
\caption{\footnotesize Finite length approximations of MLPs between (-1,0,0) and (1,0,0) in 3D SDE system \eqref{eq_system3D}. Computed by up-down gMAM (described in Section \ref{sec_results_gMAMextend}) with different intersections at the separatrix.}
\label{fig_3D2gMAM}
\end{figure}

\subsection{1D-space advection-diffusion-reaction SPDE}
\label{sec_AC1D}
Now consider an infinite-dimensional example. It is nongradient, but still of orthogonal-type, and previously observed transition features persist.
\paragraph{The system.}
Consider
\begin{equation}
    \phi_t=\kappa \phi_{xx}+\phi-\phi^3+c \phi_x +\sqrt{\epsilon}\eta
    \label{eq_AC1D}
\end{equation}
with periodic boundary condition $\phi(x+1,t)=\phi(x,t)$, where $\eta(x,t)$ is spatiotemporal white-noise with covariance $\mathbb{E} [\eta(x,t)\eta(x',t')]=\delta(x-x')\delta(t-t')$; $0<\kappa \ll 1$.

Without advection and noise (i.e., $c=0$, $\epsilon=0$), this system is 1D-space Allen-Cahn \cite{AlCa79}, which is a classical model for alloy. For $c\neq 0$, the advection makes the system nongradient. When viewed as an infinite dimensional dynamical system, \eqref{eq_AC1D} is of orthogonal-type:

\begin{Proposition}
    Given $\mathcal{C}^2$ function $u(x)$ satisfying $u(x)=u(x+1)$, define
    \begin{align*}
        & V[u]=\int_0^1 \kappa \frac{1}{2} u_x^2 + \frac{1}{4}(1-u^2)^2 \, dx ,\\
        & b[u]=c u_x ,
    \end{align*}
    and introduce inner product
    \[
        \left\langle u, v\right\rangle = \int_0^1 u(x)v(x) dx.
    \]
    Then $\left\langle \frac{\delta V}{\delta u}[u], b[u] \right\rangle =0$, and the system \eqref{eq_AC1D} with $\epsilon=0$ is equivalent to
    \[
        \phi_t(\cdot,t)=-\frac{\delta V}{\delta u}[\phi(\cdot,t)]+b[\phi(\cdot,t)].
    \]
    \label{thm_AC1D_orthogonal}
\end{Proposition}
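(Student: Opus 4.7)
The plan is to prove the two claims in a clean order: first compute the functional derivative $\tfrac{\delta V}{\delta u}$ explicitly, then use it to verify that the stated decomposition recovers the noise-less \eqref{eq_AC1D}, and finally establish orthogonality. The proof is essentially a computation, so no real obstacle is expected; the only ingredient that needs care is the use of the periodic boundary condition to discard boundary terms from integration by parts.

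For the functional derivative, I would perturb $u \mapsto u + s\,h$ with $h$ smooth and $1$-periodic, differentiate $V[u+sh]$ at $s=0$, and integrate by parts:
\[
\frac{d}{ds}\bigg|_{s=0} V[u+sh] = \int_0^1 \bigl( \kappa u_x h_x - (1-u^2)u\,h \bigr) dx = \int_0^1 \bigl( -\kappa u_{xx} - u + u^3 \bigr) h \, dx,
\]
where periodicity of $u$ and $h$ kills the boundary term $[\kappa u_x h]_0^1$. Hence $\tfrac{\delta V}{\delta u}[u] = -\kappa u_{xx} - u + u^3$. Substituting into $-\tfrac{\delta V}{\delta u}[\phi] + b[\phi]$ gives $\kappa \phi_{xx} + \phi - \phi^3 + c\phi_x$, which is exactly the right-hand side of \eqref{eq_AC1D} with $\epsilon=0$. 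This settles the second assertion.

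For the orthogonality $\langle \tfrac{\delta V}{\delta u}[u], b[u]\rangle = c\langle \tfrac{\delta V}{\delta u}[u], u_x\rangle = 0$, I would prefer the slick symmetry argument: the functional $V$ is translation-invariant on the periodic torus, i.e.\ $V[u(\cdot + \tau)] = V[u]$ for all $\tau \in \mathbb{R}$, since translation is a change of variables that preserves the integrals $\int_0^1 u_x^2\, dx$ and $\int_0^1 (1-u^2)^2\, dx$. Differentiating this identity in $\tau$ at $\tau = 0$, and using the definition of the functional derivative with variation $h = u_x$, yields $\langle \tfrac{\delta V}{\delta u}[u], u_x\rangle = 0$. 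Alternatively, a direct verification works: plugging in the explicit formula gives the integrand
\[
(-\kappa u_{xx} - u + u^3)\,u_x = \frac{d}{dx}\!\left[-\frac{\kappa}{2}u_x^2 - \frac{1}{2}u^2 + \frac{1}{4}u^4\right],
\]
which integrates to zero over $[0,1]$ by periodicity. Either route finishes the proof; I would present the translation-invariance version as the conceptually illuminating one, since it explains why the decomposition is orthogonal (the drift $c\phi_x$ is the generator of the symmetry group that leaves $V$ invariant).
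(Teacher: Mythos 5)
Your proof is correct. The computation of $\frac{\delta V}{\delta u}[u]=-\kappa u_{xx}-u+u^3$ and the verification that $-\frac{\delta V}{\delta u}+b$ reproduces the drift of \eqref{eq_AC1D} match the paper exactly, and your ``direct verification'' of orthogonality --- writing $(-\kappa u_{xx}-u+u^3)u_x$ as an exact $x$-derivative and integrating over the period --- is precisely the argument the paper gives. What you add, and what the paper does not state, is the translation-invariance explanation: $V$ is invariant under the circle action $u\mapsto u(\cdot+\tau)$, the advection $b[u]=cu_x$ is ($c$ times) the infinitesimal generator of that action, and differentiating $V[u(\cdot+\tau)]=V[u]$ at $\tau=0$ gives $\langle \frac{\delta V}{\delta u},u_x\rangle=0$ with no computation. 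This buys genuine insight --- it explains \emph{why} the decomposition is orthogonal and immediately suggests how to build further orthogonal-type examples from symmetries of a potential (indeed it is the mechanism behind the rotational 3D example of Section \ref{sec_examples_3D} and the shear term in Section \ref{sec_AC2D_intro}) --- at the cost of needing $u_x$ to be an admissible variation, which your $\mathcal{C}^2$ hypothesis supplies. Either route is complete; no gap.
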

Fixed points and periodic orbits in the $\epsilon=0$ system can also be characterized:
\begin{Proposition}
    When $c\neq 0$, $\epsilon=0$, the dynamical system \eqref{eq_AC1D} contains only three fixed points: $u_s(x) \equiv 0$, unstable; $u_+(x) \equiv 1$, stable; $u_+(x) \equiv -1$: stable.\label{thm_AC1D_fixedPt1}
\end{Proposition}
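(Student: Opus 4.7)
The plan is to imitate the simple argument used earlier for Lemma~1: at any fixed point of \eqref{eq_AC1D} with $\epsilon=0$ the orthogonal decomposition of Proposition~\ref{thm_AC1D_orthogonal} forces both the gradient part and the nongradient part to vanish separately. Concretely, a stationary solution $u$ satisfies $-\frac{\delta V}{\delta u}[u]+b[u]=0$, and taking the $L^2$-inner product with $b[u]$ together with the orthogonality $\bigl\langle \frac{\delta V}{\delta u}[u], b[u]\bigr\rangle=0$ immediately gives $\|b[u]\|^2=0$, hence $b[u]=0$ and $\frac{\delta V}{\delta u}[u]=0$.

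Then I would unpack these two equations. Since $b[u]=c u_x$ and $c\neq 0$, $b[u]=0$ forces $u_x\equiv 0$, i.e.\ $u$ is a spatial constant. Substituting a constant $u$ into $\frac{\delta V}{\delta u}[u]=-\kappa u_{xx}-(u-u^3)=0$ leaves only the algebraic equation $u-u^3=0$, whose roots on $\mathbb{R}$ are precisely $u\in\{-1,0,1\}$. This establishes that the three listed profiles exhaust the fixed points.

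For the stability claims I would linearize \eqref{eq_AC1D} around each constant $\bar u$, writing $u=\bar u+v$ with periodic $v$; the linearized equation is $v_t=\kappa v_{xx}+(1-3\bar u^2)v+c v_x$. Diagonalizing in the Fourier basis $e^{2\pi i k x}$, $k\in\mathbb{Z}$, gives eigenvalues $\lambda_k=(1-3\bar u^2)-4\pi^2 k^2\kappa+2\pi i k c$. For $\bar u=\pm 1$ the real parts equal $-2-4\pi^2 k^2\kappa<0$ for every $k$, proving linear stability; for $\bar u=0$ the $k=0$ mode has eigenvalue $+1$, giving an unstable direction.

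The main obstacle is really just making sure the orthogonal-decomposition step is rigorous in the infinite-dimensional setting, which relies on Proposition~\ref{thm_AC1D_orthogonal} and on $u$ being regular enough for $\frac{\delta V}{\delta u}[u]=-\kappa u_{xx}-u+u^3$ to make pointwise sense (standard under the $\mathcal{C}^2$ assumption already in play). Everything else reduces to an ODE identity and a Fourier computation and requires no further structural input.
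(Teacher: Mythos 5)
Your proposal is correct, but it takes a genuinely different route from the paper. You exploit the orthogonal decomposition directly: at a stationary $u$ one has $b[u]=\frac{\delta V}{\delta u}[u]$, so pairing with $b[u]$ and invoking Proposition~\ref{thm_AC1D_orthogonal} gives $\|b[u]\|^2=0$, hence $cu_x\equiv 0$ and (since $c\neq 0$) $u$ is constant, leaving $u-u^3=0$. This is exactly the infinite-dimensional analogue of Lemma~1, and it is clean and rigorous (elliptic regularity covers the smoothness caveat you raise). The paper instead works on the steady-state ODE $\kappa u_{xx}+u-u^3+cu_x=0$ itself, viewing $x$ as time: with $q=u$, $p=u_x$ and $H(q,p)=\kappa p^2/2-(1-q^2)^2/4$, the equation is a damped mechanical system, so $H$ strictly decreases along nonconstant orbits and no nonconstant $1$-periodic solution can exist. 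The payoff of the paper's heavier setup is that it is reused verbatim for Proposition~3: when $c=0$ the same system becomes Hamiltonian, and the non-uniform fixed points are counted via the periods of its closed orbits --- something your orthogonality argument cannot do, since for $c=0$ the step $b[u]=0$ carries no information. For stability, you linearize and compute Fourier eigenvalues ($\mathrm{Re}\,\lambda_k=-2-4\pi^2k^2\kappa<0$ at $\bar u=\pm 1$, and $\lambda_0=+1$ at $\bar u=0$), whereas the paper argues via the Lyapunov functional $V$ ($u=\pm 1$ are its only global minimizers; $u=0$ has nearby constants of lower $V$). Both are valid; your spectral computation is arguably the more quantitative of the two.
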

 
\begin{Proposition}
    When $c=0$, $\epsilon=0$, the dynamical system \eqref{eq_AC1D} contains three uniform fixed points, $u_s(x) \equiv 0$, unstable; $u_+(x) \equiv 1$, stable; $u_+(x) \equiv -1$: stable. Furthermore, when $0<\kappa \leq 1/(2\pi)^2$, there are also finitely many non-uniform fixed points, i.e., non-constant $u(x)$ that solves $\kappa u_{xx}+u-u^3=0$. The number of fixed points nonstrictly increases as $\kappa$ decreases.\label{thm_AC1D_fixedPt2}
\end{Proposition}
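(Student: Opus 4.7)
The plan is to split the statement into three independent pieces: (i) existence and stability of the three uniform fixed points, (ii) reduction of the non-uniform fixed point problem to a planar Hamiltonian ODE and a finite count of solutions (modulo translation) when $\kappa\le 1/(2\pi)^2$, and (iii) monotonicity of that count in $\kappa$.

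For (i), the constant solutions of $\kappa u_{xx}+u-u^3=0$ come from $u-u^3=0$, giving $u\equiv 0,\pm1$. Stability follows from linearizing the semilinear operator on the space of $1$-periodic functions and computing its Fourier spectrum. Around $u_\pm$ the linearization is $\kappa\partial_x^2-2$ with eigenvalues $-2-\kappa(2\pi k)^2<0$ for all $k\in\mathbb{Z}$, so both are stable. Around $u_s\equiv0$ the linearization is $\kappa\partial_x^2+1$; its $k=0$ mode has eigenvalue $+1>0$, so $u_s$ is unstable. When $c\neq0$ the argument in Proposition \ref{thm_AC1D_fixedPt1} additionally rules out non-uniform fixed points by showing that any stationary $u$ satisfies an ODE with a first integral whose non-trivial level sets are incompatible with the advection term; I would quote that reasoning here only as motivation for why the $c=0$ case is the interesting one.

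For (ii), I would read $\kappa u''+u-u^3=0$ as a 1D Newtonian equation in ``time'' $x$ with mass $\kappa$ and potential $W(u)=u^2/2-u^4/4$. Multiplying by $u'$ yields the conserved energy
\[
E=\tfrac{\kappa}{2}(u')^2+\tfrac{u^2}{2}-\tfrac{u^4}{4}.
\]
The phase portrait in $(u,u')$ has a center at the origin (since $W''(0)=1>0$) and two saddles at $(\pm1,0)$ connected by a heteroclinic loop at level $E=1/4$. The non-constant $1$-periodic solutions of the PDE correspond to closed orbits encircling the origin with $E\in(0,1/4)$ whose phase-plane period $T(E)$ satisfies $nT(E)=1$ for some integer $n\ge1$. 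Linearizing at the center gives $T(0^+)=2\pi\sqrt{\kappa}$, and $T(E)\to\infty$ as $E\to 1/4^-$ along the heteroclinic approach. Hence each $n$ with $n\le 1/(2\pi\sqrt{\kappa})$ admits at least one energy level $E_n$ with $T(E_n)=1/n$, so at $\kappa\le 1/(2\pi)^2$ the integer $n=1$ is admissible and the set of admissible $n$ is finite, its size being $\lfloor 1/(2\pi\sqrt{\kappa})\rfloor$. Each admissible $n$ contributes a circle of translates (the ODE is autonomous), so ``finitely many'' is meant modulo the translation action; I would state this interpretation explicitly.

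For (iii), since the admissible set is $\{n\in\mathbb{Z}_{>0}:n\le 1/(2\pi\sqrt{\kappa})\}$, decreasing $\kappa$ can only enlarge this set, yielding the monotonicity claim. The main obstacle is rigorously establishing that for each admissible $n$ there is exactly one $E_n$, i.e.\ monotonicity of $T(E)$. This is a classical but nontrivial result about the period function of a symmetric double-well Hamiltonian; I would either invoke Chicone's monotonicity theorem for quadratic-like potentials, or observe that the present statement only requires finiteness of the solution count, for which continuity of $T$ plus the two limit values $2\pi\sqrt{\kappa}$ and $\infty$ already suffice (an upper bound on the number of preimages of $1/n$ follows from a compactness argument on $[\delta,1/4-\delta]$ since $T$ is real-analytic in $E$ and non-constant). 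The secondary difficulty is formulating the ``nonstrictly increases'' claim carefully when $1/(2\pi\sqrt{\kappa})$ passes through an integer, which is handled by taking the floor and noting that at exactly $\kappa=1/(2\pi n)^2$ the $n$-th family degenerates onto the center at the origin.
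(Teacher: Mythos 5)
Your argument for the non-uniform fixed points is essentially the paper's: reduce $\kappa u_{xx}+u-u^3=0$ to a planar Hamiltonian system, observe that closed orbits around the center have periods ranging from $2\pi\sqrt{\kappa}$ (at the center) to $\infty$ (at the heteroclinic loop), and intersect that range with $\{1,1/2,1/3,\dots\}$ to get finiteness and the monotone dependence on $\kappa$. Where you diverge is the stability of the uniform states: you linearize and read off the Fourier spectrum ($\kappa\partial_x^2-2$ versus $\kappa\partial_x^2+1$), whereas the paper argues via the Lyapunov functional $V$ ($u_{\pm}$ are its only global minimizers; $u_s$ has nearby constants with smaller $V$); both work, and yours is the more standard route. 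You also flag two points the paper glosses over, and you are right on both: ``finitely many non-uniform fixed points'' only makes sense modulo the translation action (each periodic orbit yields a circle of translates $u(\cdot+x_0)$), and the paper's assertion that the period function $T(E)$ decreases monotonically is stated without proof --- your fallback via real-analyticity of $T$ plus the boundary limits $2\pi\sqrt{\kappa}$ and $\infty$ suffices for finiteness, though note that the ``nonstrictly increases in $\kappa$'' claim for the \emph{total} count (rather than the count of admissible $n$) does quietly lean on monotonicity of $T$, in your write-up as in the paper's.
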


\begin{Proposition}
    When $c\neq 0$, $\epsilon=0$, each non-uniform fixed point $u(x)$ in the dynamical system \eqref{eq_AC1D} with $c=0$ bifurcates into a periodic orbit $\phi(x,t)=u(x+ct)$.\label{thm_AC1D_PO}
\end{Proposition}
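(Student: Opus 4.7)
The approach is to make a traveling-wave ansatz and verify by direct substitution that $\phi(x,t)=u(x+ct)$ solves the PDE, then check that the ansatz produces a genuine periodic orbit (not a disguised fixed point) and respects the spatial boundary conditions. So first I would compute $\phi_t = c\,u'(x+ct)$, $\phi_x = u'(x+ct)$, $\phi_{xx} = u''(x+ct)$, and plug into \eqref{eq_AC1D} with $\epsilon=0$. The $c\,u'$ from $c\phi_x$ cancels the left-hand side, leaving $\kappa u''(x+ct)+u(x+ct)-u(x+ct)^3=0$, which is exactly the stationary equation that $u$ satisfies by hypothesis (Proposition on non-uniform fixed points with $c=0$). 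Thus $\phi(x,t)=u(x+ct)$ is a classical solution.

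Next I would check the two periodicity properties. Spatial periodicity of $\phi$ follows immediately from that of $u$: $\phi(x+1,t)=u(x+1+ct)=u(x+ct)=\phi(x,t)$, so the boundary condition $\phi(\cdot+1,t)=\phi(\cdot,t)$ is preserved along the flow. Temporal periodicity follows from the same 1-periodicity of $u$: $\phi(x,t+1/|c|)=u(x+ct\pm 1)=u(x+ct)=\phi(x,t)$, so the orbit is closed with period $T=1/|c|$.

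Finally I would argue that this really is a periodic orbit rather than a fixed point. Since $u$ is assumed non-uniform, there exists $x_0$ with $u'(x_0)\neq 0$, so at any fixed spatial location $x$, the map $t\mapsto u(x+ct)$ is non-constant whenever $c\neq 0$. Thus $\phi$ has nontrivial $t$-dependence and is a genuine periodic orbit in the infinite-dimensional phase space of 1-periodic $\mathcal{C}^2$ functions. The bifurcation picture is then transparent: at $c=0$, this orbit degenerates to the single point $u(\cdot)$ (period becomes infinite), while for $c\neq 0$ it opens up into a loop whose diameter is fixed by the shape of $u$ and whose period scales as $1/|c|$.

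The calculation itself is essentially routine, so there is no real analytical obstacle; the only thing that requires care is keeping the direction of the traveling wave and the sign of $c$ consistent with the $+c\phi_x$ convention in \eqref{eq_AC1D}, and noting that distinct non-uniform stationary profiles $u$ (modulo the $x\mapsto x+\text{const}$ symmetry that parametrizes each individual orbit) give rise to distinct periodic orbits, which is what justifies saying each non-uniform fixed point bifurcates into its own periodic orbit.
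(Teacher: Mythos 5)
Your proof is correct and follows essentially the same route as the paper's: a direct chain-rule substitution showing the $c\phi_x$ term cancels $\phi_t$, reducing the equation to the stationary profile equation $\kappa u''+u-u^3=0$, followed by the observation that $1$-periodicity of $u$ gives temporal periodicity $1/|c|$. Your explicit check that non-uniformity of $u$ makes the orbit genuinely non-constant in $t$ (rather than a disguised fixed point) is a worthwhile detail the paper leaves implicit, but it does not change the argument.
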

Proofs are in Appendix \ref{sec_SPDE1D_structures}.

Note we exhausted all fixed points in \eqref{eq_AC1D}, but we have not proved whether periodic orbits identified by Proposition \ref{thm_AC1D_PO} are the only periodic orbits. Worth mentioning is, $\omega$-limit sets other than fixed points and periodic orbits have been ruled out in this specific system \cite{FiMa89}.

Analytical characterization of separatrix structures in this infinite dimensional system is not easy. For some classical results, including investigations on dynamics on the separatrix, we refer to an incomplete list \cite{FiMc77,Ma79,FuHa89,FiMa89,An88,BrFi89,CaPe90,Ch92,DeSc95}.

\paragraph{MLPs:} Consider transitions from $u=-1$ to $u=1$ with $c\neq 0$. The only fixed point on the separatrix is $u=0$. When $\kappa \leq 1/(2\pi)^2$, there are also periodic orbit(s) on the separatrix. Figure \ref{fig_AC1D_gMAM} illustrates three local MLPs\footnote{Sometimes also referred to as nucleation instantons.}, which respectively cross the separatrix at the $u=0$ fixed point, a periodic orbit that bifurcated from a non-uniform fixed point corresponding to a 1-periodic solution of $\kappa u_{xx}+u-u^3=0$, and another periodic orbit that bifurcated from a $\frac{1}{2}$-periodic solution of $\kappa u_{xx}+u-u^3=0$ (for precise definitions of these periodic orbits, see Proposition \ref{thm_AC1D_PO}; when $c=0$, the latter two local MLPs degenerate to paths that cross separatrix at fixed points). Hyperbolicity of these fixed point and periodic orbits can be checked numerically.

\begin{figure}[h]
\centering
\footnotesize
\subfigure[null nucleation: separatrix crossing at a uniform saddle]{
\includegraphics[width=\textwidth]{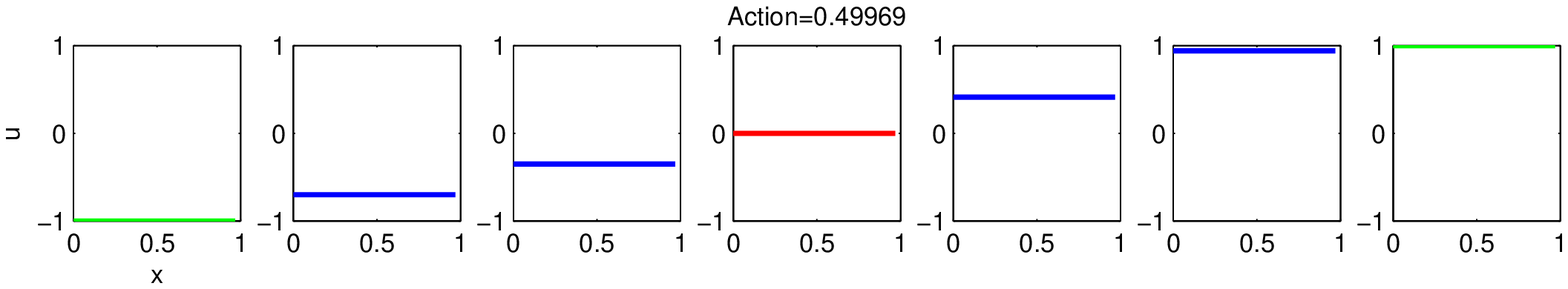}
}
\subfigure[single nucleation: separatrix crossing at periodic orbit \#1]{
\includegraphics[width=\textwidth]{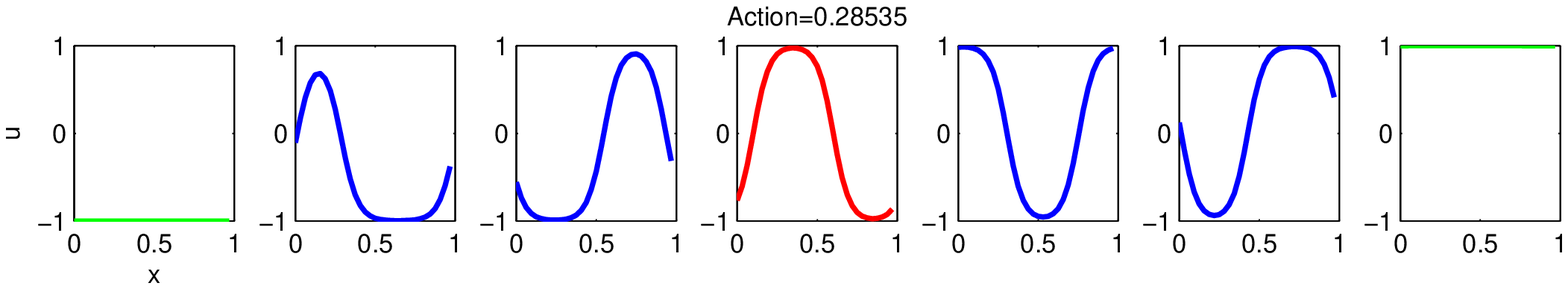}
}
\subfigure[double nucleation: separatrix crossing at periodic orbit \#2]{
\includegraphics[width=\textwidth]{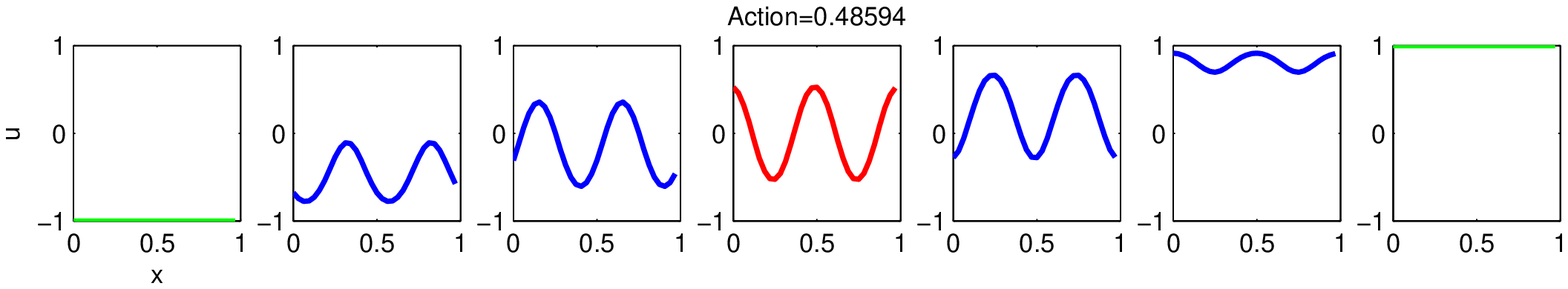}
}
\caption{\footnotesize Finite length approximations of MLPs between $u=-1$ and $u=1$ in SPDE \eqref{eq_AC1D}, computed by up-down gMAM (described in Section \ref{sec_results_gMAMextend}) with different intersections at the separatrix. $\kappa=0.005$, $c=0.1$. Each MLP is illustrated by seven snapshots, uniformly distributed from reparameterized time 0 to 1, with green indicating the stable points and red the intersection with the separatrix.}
\label{fig_AC1D_gMAM}
\end{figure}

The exact values of local action minima can be obtained by Theorem \ref{thm_action} once the separatrix crossing location is fixed. They are respectively 0.5, $\approx 0.2665$, and $\approx 0.4851$. Therefore, our numerically computed MLPs are rather accurate.


Note there has been a long time interest in studying metastable transition rate in this system. See for instance the classic paper of \cite{FaJo82}, where bounds of the action are analytically obtained for estimating the rate.

\subsection{2D-space advection-diffusion-reaction SPDE}
\label{sec_AC2D_intro}
Our final example generalizes the previous example to 2D-space. Numerical evidence suggests this nongradient system is no longer orthogonal-type (Section \ref{sec_AC2D}). The system is 
the following 2D-space 1D-time SPDE
\begin{equation}
    \phi_t=\kappa \Delta \phi+\phi-\phi^3+c \sin(2\pi y)\partial_x \phi +  \sqrt{\epsilon} \eta,
    \label{eq_AC2D}
\end{equation}
with periodic boundary conditions $\phi(x+1,y,t)=\phi(x,y,t)$ and $\phi(x,y+1,t)=\phi(x,y,t)$. $0<\kappa \ll 1$. Adding noise to nonlinear PDE with $\geq 2$ spatial dimension is nontrivial (see for instance \cite{walsh1986SPDE,da2008SPDE,hairer2012triviality}), and here we follow \cite{KoOt07}: $\eta:=\phi_\lambda *\eta'$ is a spatially regularized noise, where $*$ denotes convolution, $\phi_\lambda(x,y)=\lambda^{-2}\phi(x/\lambda,y/\lambda)$ with an approximate identity $\phi$, and $\eta'$ is spatiotemporal white-noise with covariance $\mathbb{E} [\eta(x,y,t)\eta(x',y',t')]=\delta(x-x')\delta(y-y')\delta(t-t')$. 

This system is a 2D Allen-Cahn equation with additional shear and noise. Local MLPs through saddle points in this system have been studied in \cite{HeVa08c}. We now consider periodic orbits and whether they intersect with a MLP -- note Theorem \ref{thm_action} only applies to orthogonal-type systems and thus fails to answer this question.

We numerically identified hyperbolic periodic orbits in this system, which bifurcated from saddle points as $c$ increases. There are numerically identified local MLPs that intersect with these periodic orbits. However, detailed results are deferred to Section \ref{sec_AC2D}, as it is necessary to first introduce the employed numerical tools.

\section{Hyperbolic periodic orbit identification in general nongradient systems by p-String method}
\label{sec_pString}
This section modifies String method to identify hyperbolic periodic orbits. The method applies to general nongradient systems not restricted to orthogonal-type, as long as the orbit of interest is attracting on the separatrix between two attraction basins.

The identification of periodic orbits is an important problem on its own, because they are the second simplest class of limit sets and characterize a dynamical system's behaviors. Stable periodic orbits can often be obtained as limits of numerically integrated initial value problems, and similarly fully unstable periodic orbits can be found by integration backward in time. Hyperbolic periodic orbits, however, are more difficult to compute. One popular approach is to solve a boundary value problem, oftentimes via a combination of shooting method and optimization techniques (e.g., \cite{lust1998adaptive,davidchack1999efficient,guckenheimer2000computing,ambrose2010computation}). Methods based on parameterization and Fourier series have also been used (e.g., \cite{haro2016parameterization,haro2007parameterization} and \cite{viswanath2001lindstedt}). In addition, there are approaches based on geometric / topological considerations (e.g., \cite{vrahatis1995efficient}). However, as the unstable manifold of the periodic orbit increases in dimension (e.g., Sections \ref{sec_AC1D} and \ref{sec_AC2D_intro}), performances of these methods oftentimes deteriorate.

We adopt an alternative approach based on the augmented dynamical system that String method constructs. In this system, a hyperbolic periodic orbit of the original system becomes part of a stable limit set, and numerics are thus enabled.

\subsection{The method}
\label{sec_POid}
Consider
\begin{equation}
    \dot{x}=f(x)
    \label{eq_generalDynamicalSystem}
\end{equation}
with smooth enough $f(\cdot)$. Suppose this dynamical system contains two stable fixed points $x_a$ and $x_b$, and their basins of attractions $\mathcal{D}_a$ and $\mathcal{D}_b$ cover the entire phase space (i.e., $\overline{\mathcal{D}_a} \bigcup \overline{\mathcal{D}_b} = \mathbb{R}^d$). Denote the separatrix submanifold by $\mathcal{S}$, i.e., $\mathcal{S}=\partial \mathcal{D}_a \bigcap \partial \mathcal{D}_b$. Assume there is at least one fixed point $x_s$ or periodic orbit $x_{PO}(t)$ that is attracting on $\mathcal{S}$.

$\omega$-limit sets of the dynamics restricted on the separatrix submanifold, such as fixed point or periodic orbit, are unstable in the full phase space. Due to this instability, numerical errors make it difficult to locate these limit sets. We use the following algorithm to approximate such hyperbolic fixed point or periodic orbit:

\paragraph{p-String method:}
\begin{enumerate}
\item
    Evolve a discretized path from $x_a$ and $x_b$ (i.e., the initial path $\phi_0(\cdot)$ satisfies $\phi_0(1)=x_a$ and $\phi_0(n+1)=x_b$, with $n$ sufficiently large) by String method (see e.g., \cite{StringMethod,StringSimplified} or Appendix \ref{sec_review_String}) --- that is, alternate between two substeps: evolution of each point on the path by \eqref{eq_generalDynamicalSystem}, and reparametrization of the path.\label{item_step1}
\item
    Terminate the evolution when convergence towards a periodic evolution is detected (including the degenerate case of converged String evolution).

    Specifically, at each step $i$, compute the action $S_i$ of the current path $\phi_i$. If there is $\tilde{i}<i$ such that $\big| S_i-S_{\tilde{i}}\big| /\max\{|S_{\tilde{i}}|,|S_i|\}<\text{threshold}$, then trigger an additional check 
    on whether $\| \phi_i(\cdot) - \phi_{\tilde{i}}(\cdot) \|$ is small enough; if yes, then periodic behavior is detected and String evolution terminates.
    \label{item_step2}
\item
    Denote by $f$ the step at which String evolution was terminated. Store the last path $\phi_f(\cdot)$. Further evolve each point on this path, i.e., $\phi_f(j)$ for $1\leq j \leq n+1$, according to the dynamics of \eqref{eq_generalDynamicalSystem}, however this time without reparametrization.
    \label{item_step3}
\item
    Terminate the evolution when all points but one are attracted to $x_a$ or $x_b$. That is, at each step $i$, compute $d_j=\min\{ \|\phi_i(j)-x_a\|, \|\phi_i(j)-x_b\| \}$ for each $1 \leq j \leq n+1$. Terminate when $\{j \big| |d_j|> h \}$ contains only one element $j^*$, where $h$ is the evolution timestep.
    \label{item_step4}
\item
    Output $\phi_f(j^*)$ as the result. It is a fixed point if $\tilde{f}=f-1$. Otherwise it is a point on a periodic orbit, whose period is approximately $(f-\tilde{f}-1)h$; this periodic orbit can be recovered by evolving $\phi_f(j^*)$ according to \eqref{eq_generalDynamicalSystem}.
    \label{item_step5}
\end{enumerate}

\begin{Remark}
    The method still works if $x_a$ and $x_b$ are not exactly the two sinks but in different attraction basins. In addition, if the attractors of the two basins are not points, e.g., limit cycles instead, the method can still work when $n$ is large enough and steps \ref{item_step3}-\ref{item_step5} are modified accordingly; however, an $n$ too large might lead to inefficient computations.
\end{Remark}

\begin{Remark}
	The accuracy of p-String method increases with $n$. However, larger $n$ corresponds to more computations. Two possible improvements are, (i) an adaptive version of p-String, in which points on the path away from the separatrix are discarded, so that more points can be placed near the separatrix, and (ii) to use the p-String result of as the initial condition of some other high-fidelity method (e.g., Newton or quasi-Newton based; thanks to an anonymous referee's comment).
\end{Remark}

\subsection{The rationale}
\label{sec_pString_rationale}
The algorithm contains two parts. Steps \ref{item_step1}-\ref{item_step2} are based on the idea that the limit set of String evolution dynamics contains a limit set of \eqref{eq_generalDynamicalSystem}. More specifically, consider two evolutions of paths, one without reparametrization and one with:
\begin{eqnarray*}
    &\psi_t(\alpha,t)=f(\psi(\alpha,t)), & \qquad \psi(0,t)=x_a, \psi(1,t)=x_b \\
    &\phi_t(\beta,t)=f(\phi(\beta,t))+r(\beta,t), & \qquad \psi(0,t)=x_a, \psi(1,t)=x_b,
\end{eqnarray*}
where $r(\beta,t)$ is a virtual force parallel to $\phi_\beta$ for ensuring a constant distance parametrization $\|\phi_\beta\|=\text{constant}$. Geometrically, $\psi(\cdot,t)$ and $\phi(\cdot,t)$ represent the same path in phase space, i.e., for any $\alpha\in[0,1]$, there exists a $\beta(t)\in[0,1]$ such that $\phi(\beta(t),t)=\psi(\alpha,t)$. It is just their parametrizations that are different: for large $t$, $\phi(\cdot,t)$ is much less singular than $\psi(\cdot,t)$. String method computes $\phi$ due to numerical considerations \cite{StringMethod,StringSimplified}.

Note $\psi(\cdot,t)$ has to cross the separatrix, supposedly at $\alpha_0(t)$. $\alpha_0(t)$ is in fact a constant $\alpha_0$, because separatrix is invariant under dynamics. Given $T$ large enough, $\psi(\alpha_0,T)$ will approach an $\omega$-limit set on the separatrix. Since there is $\beta_0(T)$ such that $\phi(\beta_0(T),T)=\psi(\alpha_0,T)$, $\phi(\beta_0(T),T)$, which is a point on the path given by String evolution, approximates a point on the limit set.

The second part of the algorithm (Steps \ref{item_step3}-\ref{item_step5}) finds $\beta_0(T)$ and thus $\phi(\beta_0(T),T)$. The idea is, if a discretized path is evolved pointwise under \eqref{eq_generalDynamicalSystem} without reparameterization, points away from the separatrix will soon be attracted towards $x_a$ or $x_b$, and the point that remains most further away from $x_a$ and $x_b$ corresponds to what's on the separatrix. Numerical error of this identification naturally decreases as path discretization is refined.

\subsection{Example results}
\label{sec_pString_res}

\paragraph{2D SDE system.} p-String method approximates the $(0,1)$ saddle as
\[
    z=-0.0000\ldots, r=1.0000\ldots
\]
Computation used $n=30$ discretization points, $h=0.01$ step size forward Euler evolution, $\text{threshold}=10^{-6}$, and initial path linear from $(-1,0)$ to $(1,0)$.

\paragraph{3D SDE system (rotational).}
p-String method identifies a point on the periodic orbit as
\[
    x=-0.7115\ldots, y=0.7018\ldots, z=0.0000\ldots
\]
It corresponds to $r=\sqrt{x^2+y^2}=0.9993\ldots$. Recall the true periodic orbit is $r=1,z=0$.

Computation used $n=50$, $h=0.01$ Verlet evolution, $\text{threshold}=10^{-6}$, and random initial path from $(0,0,-1)$ to $(0,0,1)$ (random for avoiding singularity at $|z|\neq 1,x=y=0$).

\paragraph{3D SDE system (non-rotational).}
p-String method identifies a point on the periodic orbit as
\[
    x=1.9602\ldots, y=-1.0365\ldots, z=-0.0038\ldots
\]
It corresponds to $(x^4+y^4)^{1/4}=1.9974\ldots$. Recall the true periodic orbit is $(x^4+y^4)^{1/4}=2,z=0$.

\begin{figure}[h]
    \centering
    \includegraphics[width=\textwidth]{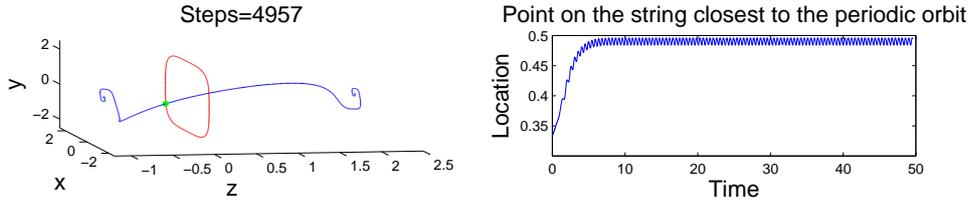}
    \caption{\footnotesize Identification of periodic orbit in system \eqref{eq_system3D2} by p-String method. Left figure illustrates the path at termination of the string evolution. Right figure illustrates where intersection between the path and the separatrix is located on the path.}
    \label{fig_Ex3_3D2_string}
\end{figure}

Figure \ref{fig_Ex3_3D2_string} left panel illustrates the terminal configuration of the string $\phi_f$. Three facts are: (i) it is not MLP; (ii) it is not necessarily perpendicular to the separatrix, even though $-\nabla V$ is perpendicular to the separatrix; (iii) the separatrix crossing location on the string (i.e. $\beta_0(T)$ in Section \ref{sec_pString_rationale}) is not a constant; instead, since the string evolution converges to a limit cycle, $\beta_0(\cdot)$ converges to an oscillation.

Computation was done using $n=1000$, $h=0.01$ Verlet evolution, $\text{threshold}=10^{-6}$, and initial path linear from $(0,0,-1)$ to $(0,0,2)$. We chose a large $n$ only so that $\beta_0(T)$ has three significant digits and Figure \ref{fig_Ex3_3D2_string} right panel is numerically smooth.

\paragraph{1D-space SPDE.}
According to Proposition \ref{thm_AC1D_PO}, any point $u(x)$ on the true periodic orbit satisfies $\kappa u_{xx}+u-u^3=0$. With linear/vertical/double vertical initial string configurations (see Appendix \ref{sec_initialPath}), p-String method identified $u$ that numerically satisfy this equation ($L^2$ residuals: $0$ / $0.0033\ldots$ / $0.0014\ldots$) and govern the MLPs of null/single/double nucleation. See red in Figure \ref{fig_AC1D_gMAM} for graphs of these $u$'s.

Simulation settings are: $n=40$, $h=0.01$, $\text{threshold}=10^{-6}$, space is pseudospectrally discretized using first 32 modes, and time integration is based on Strang splitting, where exponential integrator is used for diffusion and two half-step Eulers are used for reaction and advection.

\paragraph{2D-space SPDE.} See Section \ref{sec_AC2D}.

\section{Identified periodic orbit helps understand metastable transitions}
\label{sec_results}

\subsection{Transition rate}
The transition rate from $x_a$ to $x_b$ is quantified by the quasipotential up to a prefactor. If one ignores the prefactor, which is generally not provided by a large deviation theory (e.g., \cite{DeZe98}), then it is sufficient to investigate the minimum action.

For systems of orthogonal-type (including gradient systems), once a fixed point or periodic orbit that attracts on the separatrix is identified, there is an associated local minimum of action, expressed in terms of a barrier height (Theorem \ref{thm_action}; this is analogous to Arrhenius rate formula \cite{hanggi1990reaction}). Therefore, if one only cares about the transition rate, computation of the corresponding transition path is not necessary. However, to obtain the global minimum of action (and hence the exponent in the transition rate), one has to exhaust attractors on the separatrix, which could be challenging for high dimensional problems.

For general non-orthogonal-type systems, however, there may be multiple local minima of the action associated with one separatrix crossing location $x_s$. Section \ref{sec_AC2D} contains an example. Clearly, no single function of $x_a$ and $x_s$ can provide such multiple local minimum values. It's unclear whether the global minimum (i.e., the quasipotential) is some barrier height.

\subsection{Transition path and its numerical computation}
\label{sec_results_gMAMextend}
The MLP from $x_a$ to $x_b$ bridges two attraction basins and thus has to cross the separatrix. If the crossing location is known (denoted by $x_s)$, the MLP can be numerically obtained more efficiently. This is because MLP can be made parameterization-independent (see \cite{HeVa08a, VaHe08b, HeVa08c} or Section \ref{sec_orthogonal_MLP}), and the concatenation of two MLPs, first from $x_a$ to $x_s$ and then from $x_s$ to $x_b$, corresponds to the MLP from $x_a$ to $x_b$. Finding these two shorter MLPs generally requires less exploration in the state space.

$x_s$ is thus helpful. For orthogonal-type systems, Theorem \ref{thm_action} implies that a fixed point or a point on a periodic orbit that attracts on the separatrix is the $x_s$ of a local MLP. Such a point can be identified by p-String method (Section \ref{sec_pString}). For general systems, given any local MLP, there is always a path with the same action that crosses a point in the $\omega-$limit set (i.e., attractor) on the separatrix, because one can modify the given local MLP by adding a link from its separatrix crossing location to the $\omega-$limit set of that crossing location, without spending any additional noise. However, it is left unproved whether any point in the attractor corresponds to a local MLP. In our numerical experiments, various hyperbolic $x_s$ (attracting on the separatrix) do appear to correspond to local MLPs (see Section \ref{sec_AC2D}).

We use gMAM to obtain the $x_a \rightarrow x_s$ MLP. To obtain the $x_s \rightarrow x_b$ MLP, note it is in fact a zero of the action functional, no matter whether the system is of orthogonal-type. This is because no noise is required once the system is in the attraction basin of $x_b$. Therefore, theoretically speaking, one only needs to find the $x_a$ to $x_s$ MLP, and then compute the stable heteroclinic orbit from $x_s$ to $x_b$. Numerically, a perturbation of $x_s$ in the attraction basin of $x_b$ is needed, so that downhill heteroclinic orbit can be computed in finite time. There are multiple ways to find such a perturbation, and we choose to use a coarse gMAM computation. More precisely, we consider $dx=f(x)dt+\sqrt{\epsilon}dW$ and use the following algorithm:


\paragraph{Up-down gMAM for computing MLP that crosses separatrix at $x_s$:}
\begin{enumerate}
\item
    Choose a priori three positive integer parameters: $\Delta$, $n_1$ and $n_2$; in general, $n_1 \gg n_2$.
\item
    Compute a MLP from $x_a$ to $x_s$ using gMAM with paths discretized by $n_1+1$ points. Denote by $x^{\text{up}}_j$ ($1\leq j \leq n_1+1$) the resulting path.
    \label{item2_step2}
\item
    Compute a MLP from $x_s$ to $x_b$ using gMAM with paths discretized by $n_2+1$ points. Denote by $\tilde{x}^{\text{down}}_j$ ($1\leq j \leq n_2+1$) the resulting path.
    \label{item2_step3}
\item
    Let $x^+=\tilde{x}^{\text{down}}_2$. Integrate $\dot{x}=f(x)$ with initial condition $x(0)=x^+$. The integration step size $\delta t$ is usually chosen the same as the one used for gMAM evolutions. Terminate the integration at the smallest time $T$ satisfying $\|x(T)-x_b\| \leq \delta t$. Denote by $\hat{x}_k$ ($0 \leq k \leq T/\delta t$) the numerically integrated discrete trajectory.
    \label{item2_step4}
\item
    Let
    \[
        x^{\text{down}}_j=\begin{cases}
            \hat{x}_{(j-1)\Delta}, & \qquad 1\leq j \leq n_2; \\
            x_b, & \qquad j=n_2+1
        \end{cases},
    \]
    where $n_2=T/\delta t/\Delta+1$. That is, down sample once every $\Delta$ points in $\hat{x}$ to form $x^{\text{down}}$. If there is no requirement on the number of discretization points, $\Delta$ can simply be chosen as $1$.
\item
    Form a discrete path $x_j$ by
    \[
        x_j=\begin{cases}
            x^{\text{up}}_j, & \qquad 1\leq j \leq n_1+1; \\
            x^{\text{down}}_{j-n_1-1}, & \qquad n_1+2 \leq j \leq n+1
        \end{cases},
    \]
    where $n=n_1+n_2+1$.
\item
    Compute a numerical approximation of the geometrized action
    \[
        \mathcal{S}=\int_0^1 \| \dot{x} (\alpha)\| \|f(x(\alpha))\| - \langle \dot{x} (\alpha), f(x(\alpha)) \rangle d\alpha
    \]
    using quadrature and finite difference for $\dot{x}$. Be mindful that $\alpha\in [0,1]$ is discretized to $j$ using non-uniform grid sizes, $\frac{1}{2n_1}$ in $x^{\text{up}}$ region, and $\frac{1}{2n_2}$ in $x^{\text{down}}$ regions.
\end{enumerate}


\paragraph{Comparison with the original gMAM:}
Table \ref{tab_gMAMcomparsion} compares gMAM and up-down gMAM (with its gMAM component implemented in the same way; $x_s$ is from Section \ref{sec_pString_res}, computed by p-String method).

\begin{table}[h]
\hspace{-20pt}
\small
\begin{tabular}{l| c | c | c | c | c}
System              & 2D SDE    & 3D SDE            & 3D SDE                & 3D SDE            & 1D-space SPDE \\
                    &           & rotational        & non-rotational        & non-rot. (finer)  & $\kappa=0.01$,$c=0.1$ \\
\hline
$n_1$               & 100       & 100               & 100                   & 200               & 40 \\
Total evolution steps$_1$           & 79        & 36675             & 21074                 & 74130             & 37968 \\
$n_2$               & 10        & 20                & 20                    & 40                & 10 \\
Total evolution steps$_2$           & 78        & 4679              & 4099                  & 5646              & 6676 \\
$\Delta$            & 1         & 1                 & 1                     & 1                 & 10 \\
up-down gMAM action & 0.50008   & 0.50031           & 0.85490               & 0.84123           & 0.37873 \\
\hline
$n$                 & 100       & 100               & 100                   & 200               & 40 \\
Total evolution steps & 148       & 38661$^*$         & 43140$^*$             & 166123$^*$        & 16796 \\
gMAM action         & 0.49987   & 0.50448           & 0.86163               & 0.84545           & 0.39827 \\
\hline
Termination threshold & $10^{-6}$ & $10^{-5}$       & $10^{-6}$             & $10^{-6}$         & $10^{-6}$ \\
Evolution step size & $0.1$     & $0.01$            & $0.01$                & $0.01$ and $0.005 ^\dagger$ & $0.01$ \\
\hline
True action         & 0.5       & 0.5               & $5/6 \approx 0.8333$  & $5/6 \approx 0.8333$ & $\approx 0.3732$ \\
\end{tabular}
\caption{Comparison between gMAM and up-down gMAM.}
*: gMAM path evolution was terminated when the action values converge (threshold on the amount of change: $10^{-10}$ for 3D SDE (rot.) and $10^{-8}$ for 3D SDE (non-rot.)) \\
$\dagger$: up-down gMAM uses $h=0.01$ and gMAM uses $h=0.005$, because gMAM with $0.01$ is no longer convergent -- there action value oscillates around 0.89.
\label{tab_gMAMcomparsion}
\end{table}

\begin{figure}[h!]
\centering
\footnotesize
\subfigure[n=50]{
\includegraphics[width=0.48\textwidth]{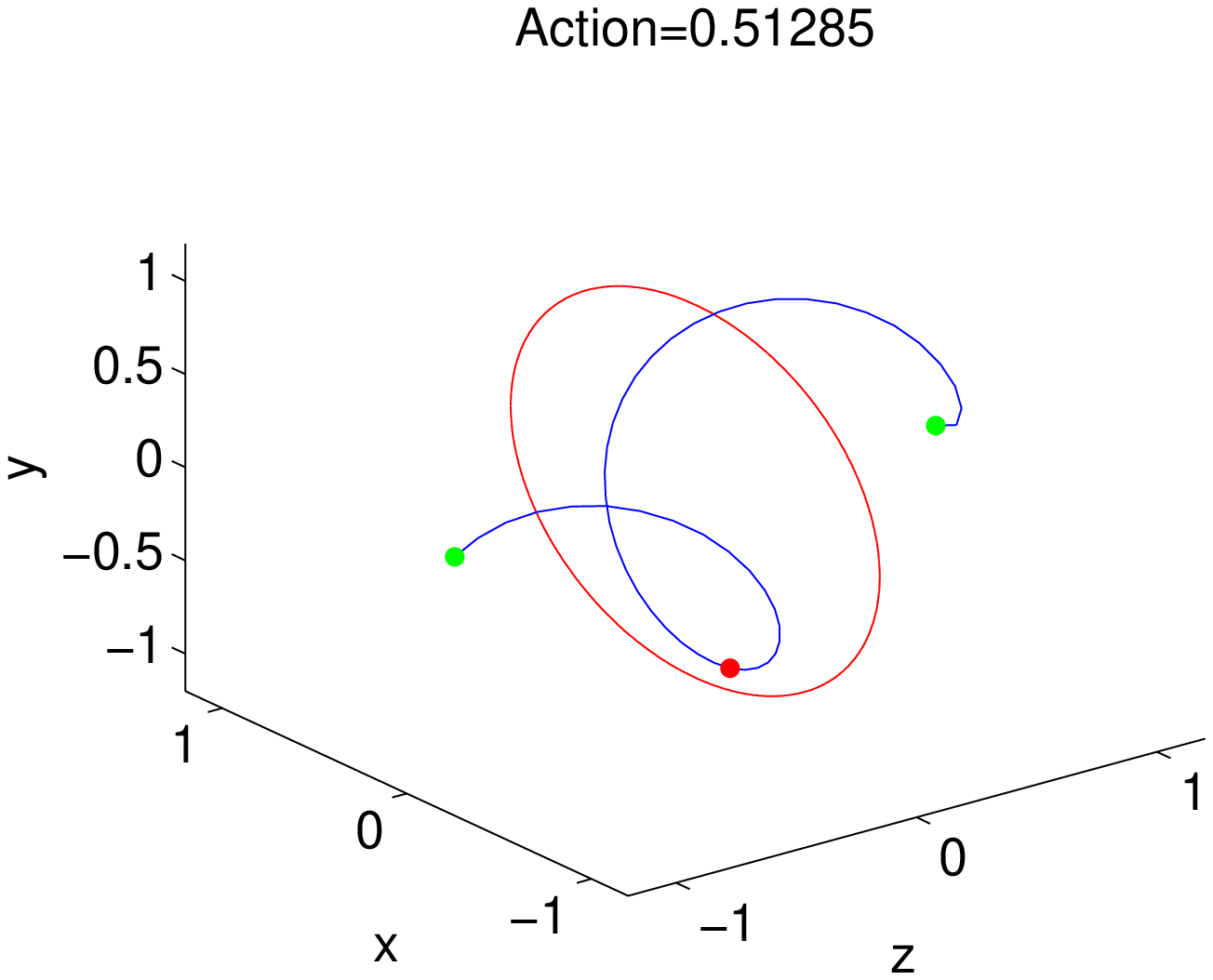}
}
\subfigure[n=100]{
\includegraphics[width=0.48\textwidth]{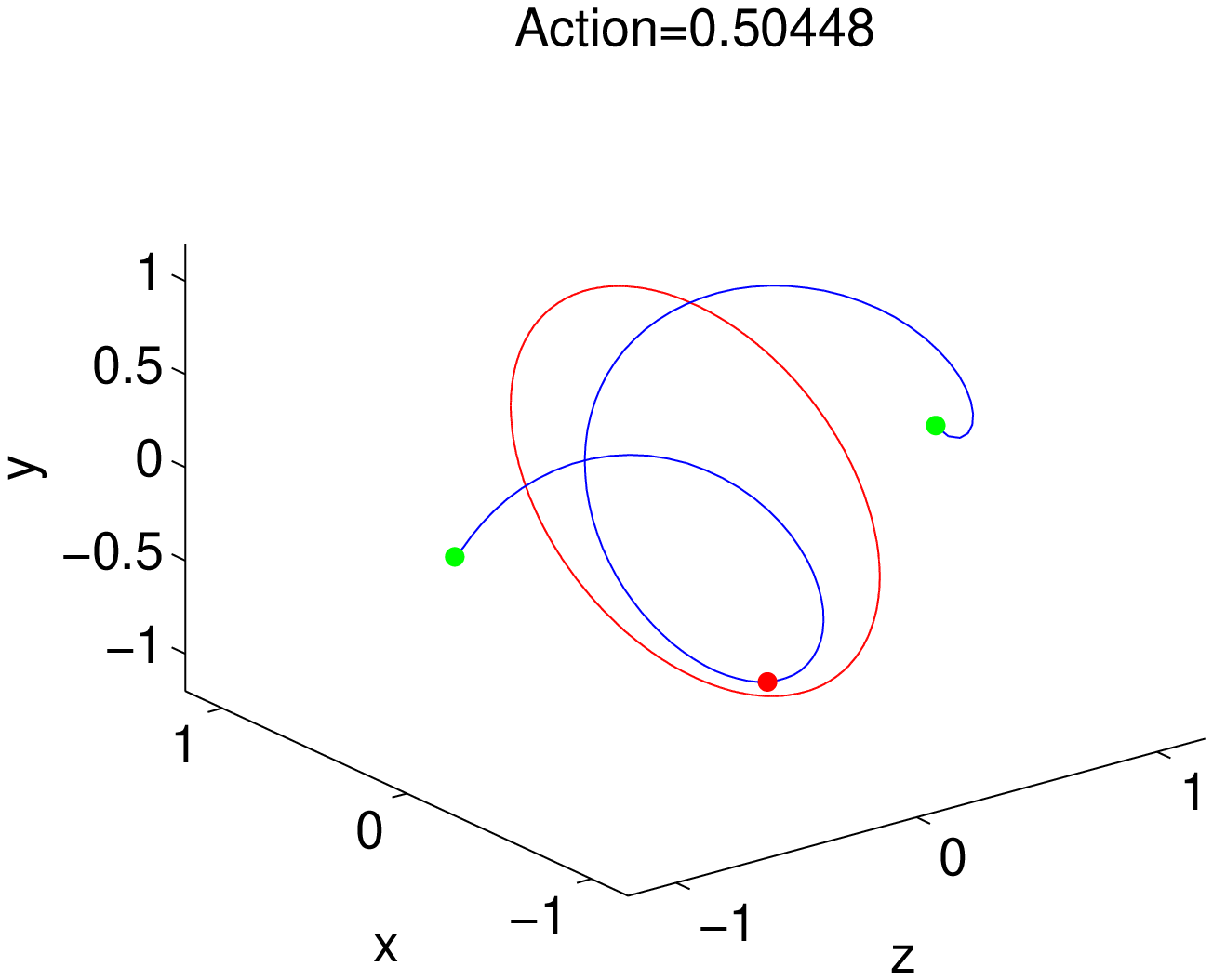}
}
\caption{\footnotesize MLPs in 3D SDE system \eqref{eq_system3D} approximated by gMAM.}
\label{fig_3DgMAMoriginal}
\end{figure}

Up-down gMAM demonstrates better accuracy in minimizing the action. The reason is, gMAM-approximated MLP intersects the separatrix at a location further from the periodic orbit (compare Figure \ref{fig_3DgMAMoriginal} with the $n=100$ up-down gMAM results in Figure \ref{fig_3DgMAM}). This intersection can be made closer to the periodic orbit by increasing $n$ in gMAM. However, p-String is much more accurate (the separatrix crossing location used by up-down gMAM in Figure \ref{fig_3DgMAM} was computed by p-String with $n=50$). p-String suits the identification of separatrix crossing better, because a local MLP through a periodic orbit is of infinite arclength, and gMAM has to compromise and approximate it by a path of finite arclength; on the contrary, p-String not necessarily approximates the MLP and the string can be of finite length (this is in fact provable for the example in Section \ref{sec_examples_3D}, and the long-time string evolution will just be Figure \ref{fig_2Dstring} rotating in the 3D space).

Up-down gMAM also appears to be more efficient. It usually converges faster, and sometimes allows larger time step too. When same step size is used, the total computational cost of up-down gMAM can be characterized by $n_1 \cdot \text{steps}_1+n_2 \cdot \text{steps}_2$, because the cost of up-down gMAM Step \ref{item2_step4} is negligible comparing to Step \ref{item2_step2} and \ref{item2_step3}, and gMAM cost is characterized by $n \cdot \text{steps}$. In most of our experiments, $n_1 \cdot \text{steps}_1+n_2 \cdot \text{steps}_2 < n \cdot \text{steps}$. To understand this comparison in a fair way, however, note (i) gMAM works for more general problems, e.g., finding MLP between arbitrary points; (ii) up-down gMAM uses additional information on $x_s$, whose computation by p-String method also takes time --- this cost, however, was much smaller in our experiments than that of up-down gMAM Step \ref{item2_step2} and \ref{item2_step3}. To provide an illustration, for generating results in the ``3D SDE non-rotational'' column in Table \ref{tab_gMAMcomparsion} on an Intel i7-4600 laptop with MATLAB R2016b, gMAM took 108.6 seconds, while p-String took 16.7 seconds and up-down gMAM took 54.8 seconds (altogether: 71.5 seconds, 65.8\% of gMAM); for the ``3D SDE non-rot. (finer)'' column, gMAM took 847.4 seconds, while p-String took 16.7 seconds and up-down gMAM took 378.0 seconds (altogether: 394.7 seconds, 46.6\% of gMAM).

\subsection{MLPs in a non-orthogonal-type system \eqref{eq_AC2D}}
\label{sec_AC2D}
Although local MLPs in orthogonal-type systems can be understood by Theorem \ref{thm_action}, in general nongradient systems, it is unclear whether an arbitrary point in the attractor on the separatrix still corresponds to a local MLP. We numerically demonstrate that system \eqref{eq_AC2D} is not of orthogonal-type, and yet there are still local MLPs that cross identified saddle points and periodic orbits.

\paragraph{Fixed points and periodic orbits.}
Without noise, system \eqref{eq_AC2D} is
\begin{equation}
    \phi_t=\kappa (\phi_{xx}+\phi_{yy})+\phi-\phi^3+c \sin(2\pi y)\partial_x \phi .
    \label{eq_AC2D_deterministic}
\end{equation}
Viewed as a dynamical system in $t$, $u(x,y)=-1$, $u(x,y)=1$ and $u(x,y)=0$ are fixed points. When shear is absent (i.e., $c=0$), we know $u=\pm 1$ are sinks, and $u=0$ is a saddle.

\begin{figure}[h]
    \centering
    \includegraphics[width=0.25\textwidth]{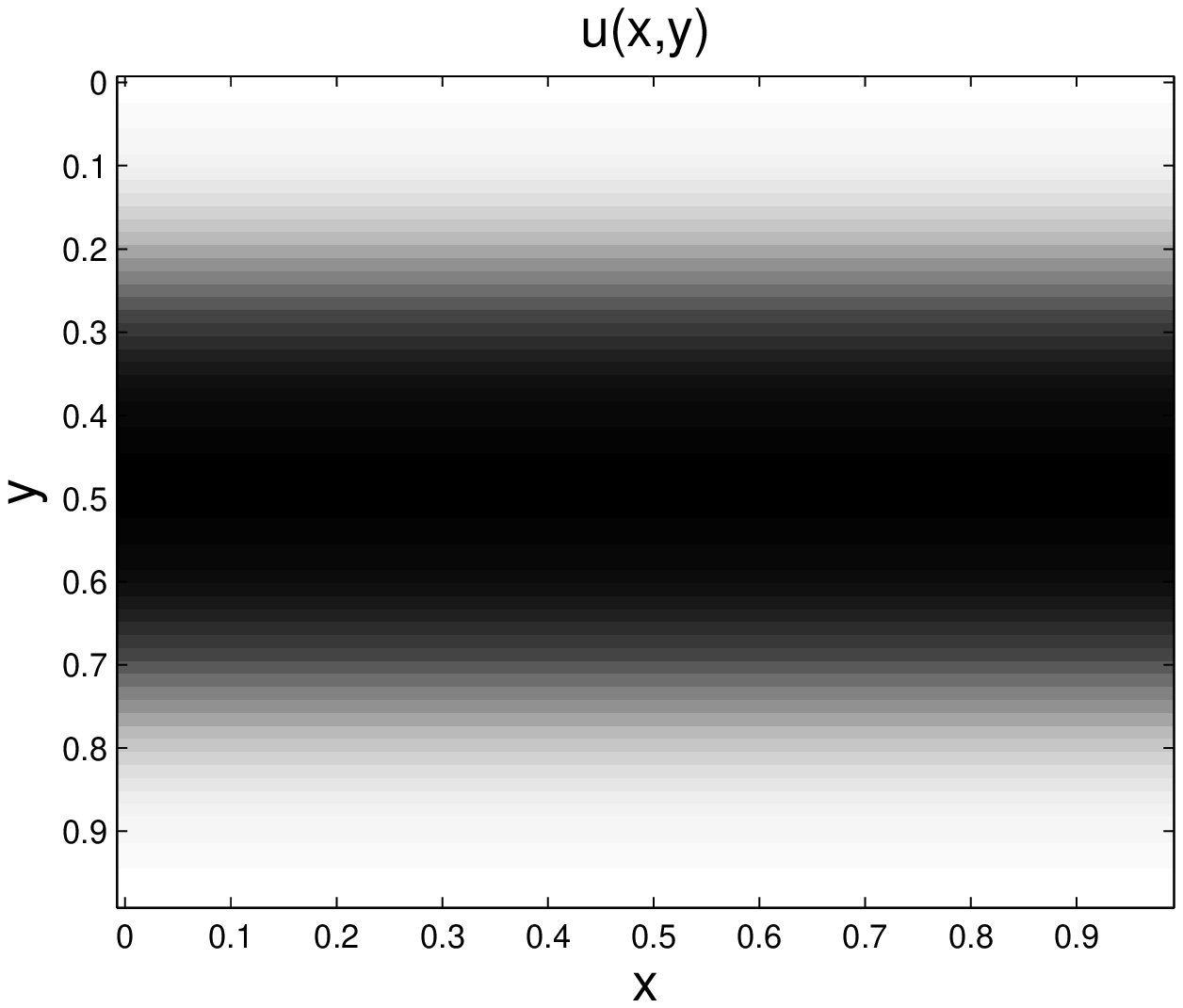} ~
    \includegraphics[width=0.25\textwidth]{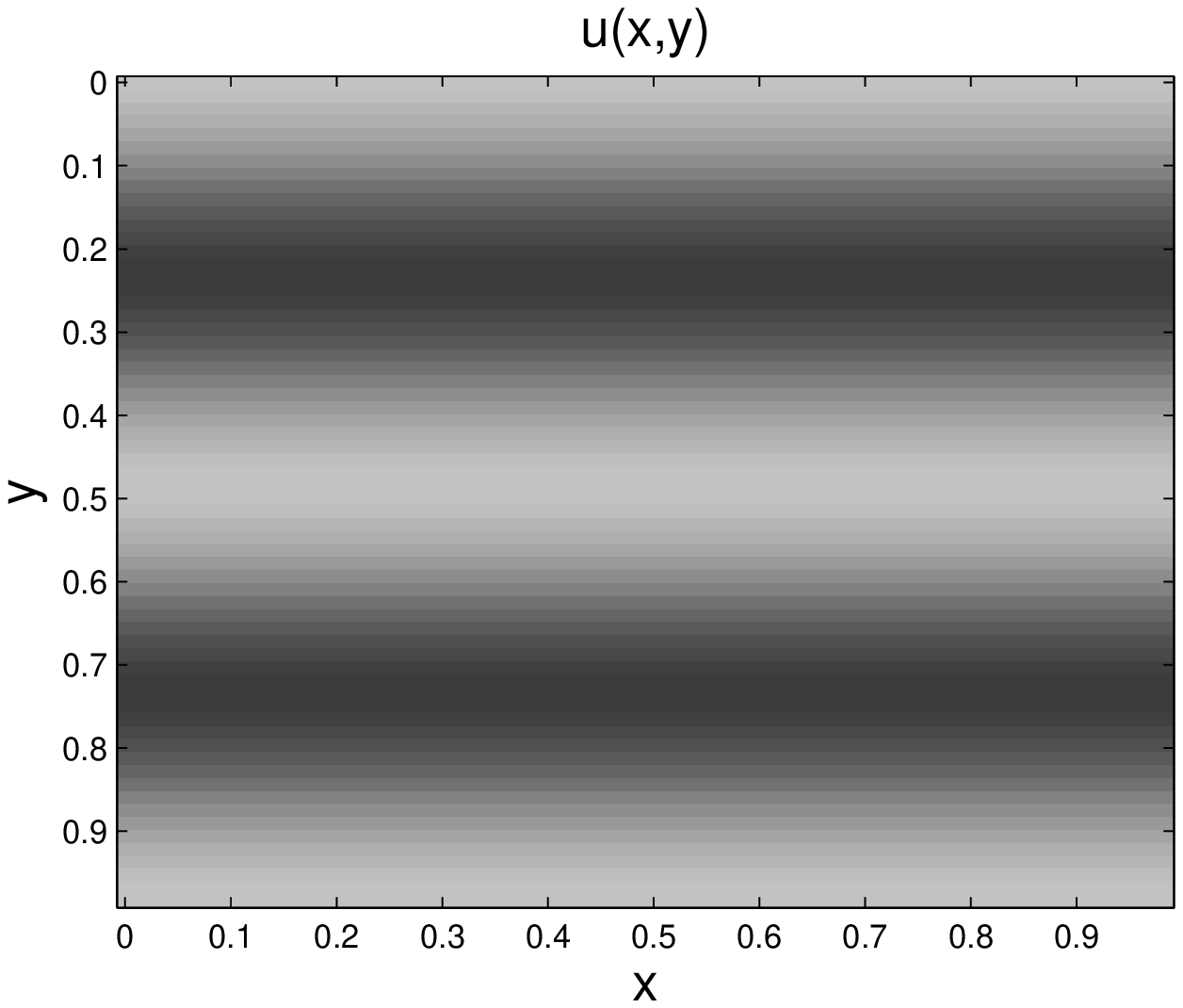}
    \caption{\footnotesize Horizontal fixed points numerically obtained by p-String method. $\kappa=0.005$. $c$ value is irrelevant. Values are represented by gray scale, white $-1$ and black $+1$.}
    \label{fig_CP_horizontal}
\end{figure}

There are also non-uniform fixed points. One group of them are invariant in $x$ and independent of $c$ values (see Figure \ref{fig_CP_horizontal}). It is straightforward to obtain them:

\begin{Proposition}[Horizontal fixed points]
    Any fixed point $\varphi(y,t)\equiv v(y)$ in 1D-space subsystem $\varphi_t=\kappa \varphi_{yy}+\varphi-\varphi^3$ (quantified in Proposition \ref{thm_AC1D_fixedPt2}) corresponds to a fixed point $\phi(x,y,t) \equiv u(x,y) :=v(y)$ in \eqref{eq_AC2D_deterministic}.
\end{Proposition}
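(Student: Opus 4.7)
The plan is a direct substitution. I would take the ansatz $\phi(x,y,t) := v(y)$, where $v$ is any solution of the 1D time-independent equation $\kappa v''(y) + v(y) - v(y)^3 = 0$ with period-$1$ boundary conditions — this is exactly the fixed-point equation of the 1D subsystem $\varphi_t = \kappa \varphi_{yy} + \varphi - \varphi^3$ that is characterized by Proposition \ref{thm_AC1D_fixedPt2} (modulo renaming the spatial variable from $x$ to $y$, which is immaterial). Then I plug the ansatz into the 2D deterministic equation \eqref{eq_AC2D_deterministic} and check each term.

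The left-hand side $\phi_t$ vanishes because $u(x,y) = v(y)$ is time-independent. On the right-hand side, $\phi_{xx} = 0$ and $\partial_x \phi = 0$ because $u$ has no $x$-dependence, so the shear term $c\sin(2\pi y)\partial_x\phi$ is annihilated identically, independently of $c$ (this is precisely why the figure caption remarks that "$c$ value is irrelevant"). What remains on the right-hand side is $\kappa v''(y) + v(y) - v(y)^3$, which is zero by the defining equation of $v$. Hence $\phi \equiv v(y)$ is a fixed point of \eqref{eq_AC2D_deterministic}. Boundary conditions are also satisfied: periodicity in $y$ is inherited from $v$, and periodicity in $x$ is trivial since $u$ is constant in $x$.

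There is essentially no obstacle; the statement is a verification. The only conceptual content is the geometric observation that any profile invariant along the shear direction $x$ lies in the kernel of the advection operator $c\sin(2\pi y)\partial_x$, so the 2D sheared dynamics restricted to such profiles reduces exactly to the 1D unsheared Allen–Cahn dynamics in the transverse coordinate $y$. This also clarifies why the analogue of Proposition \ref{thm_AC1D_PO} does not apply in this direction: no bifurcation into traveling periodic orbits occurs for horizontal profiles because the shear does not see them.
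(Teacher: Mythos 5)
Your proof is correct and is exactly the direct substitution the paper has in mind (the paper states the result without proof, calling it "straightforward"): $x$-independence kills both $\phi_{xx}$ and the shear term, leaving the 1D fixed-point equation $\kappa v'' + v - v^3 = 0$. Nothing is missing.
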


\begin{figure}[h]
    \centering
    \includegraphics[width=0.25\textwidth]{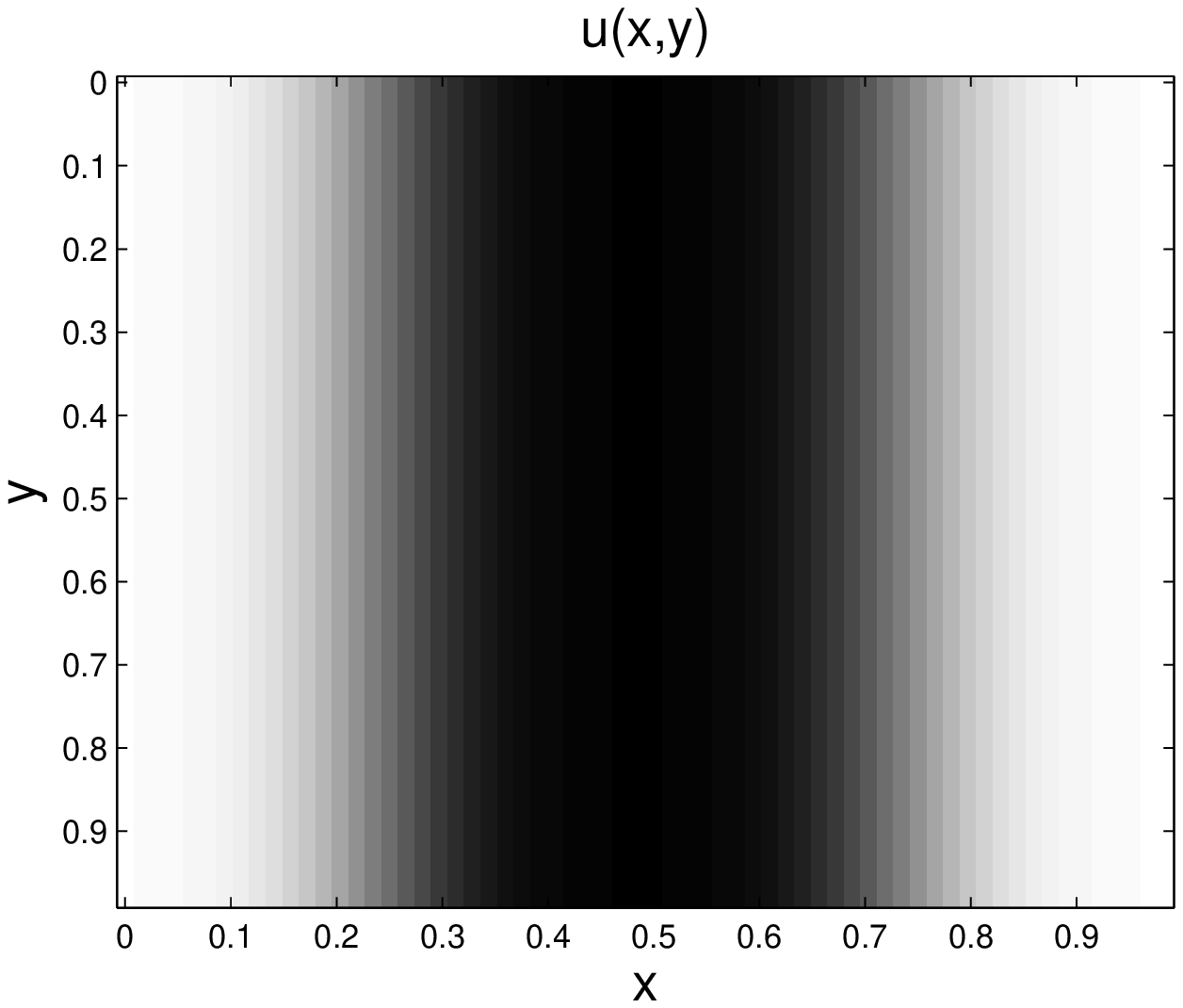} ~
    \includegraphics[width=0.25\textwidth]{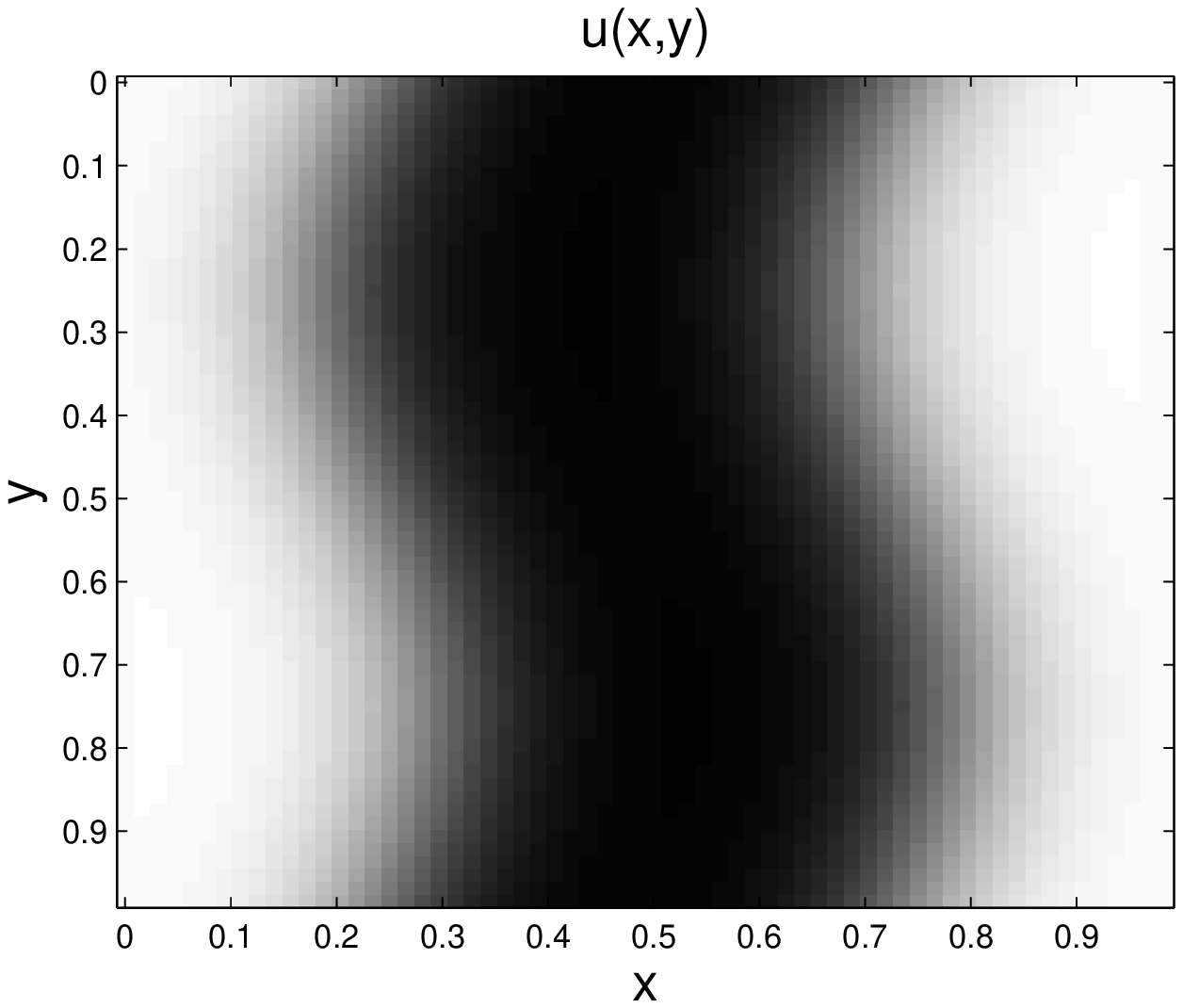} ~
    \includegraphics[width=0.25\textwidth]{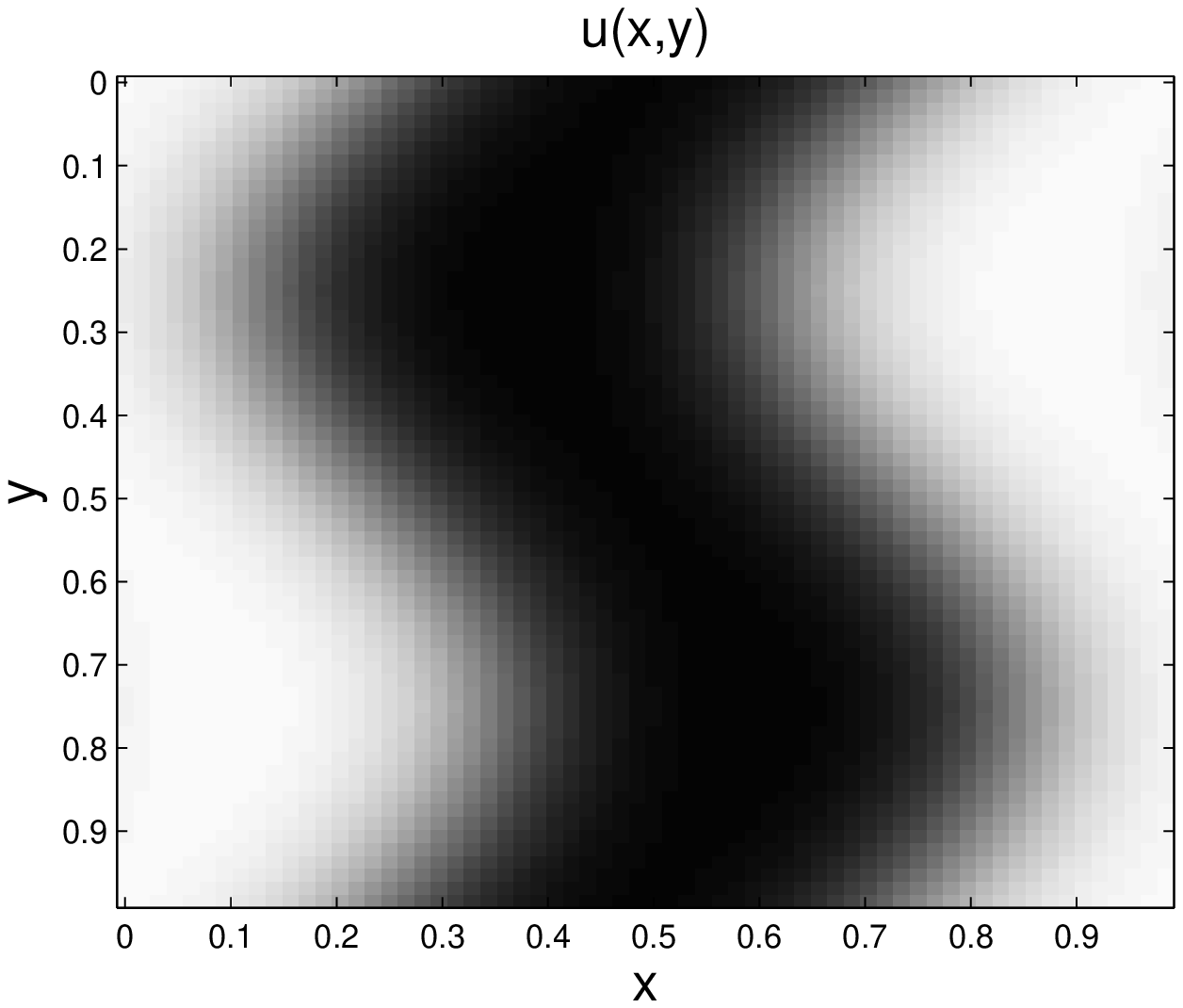} \\
    \includegraphics[width=0.25\textwidth]{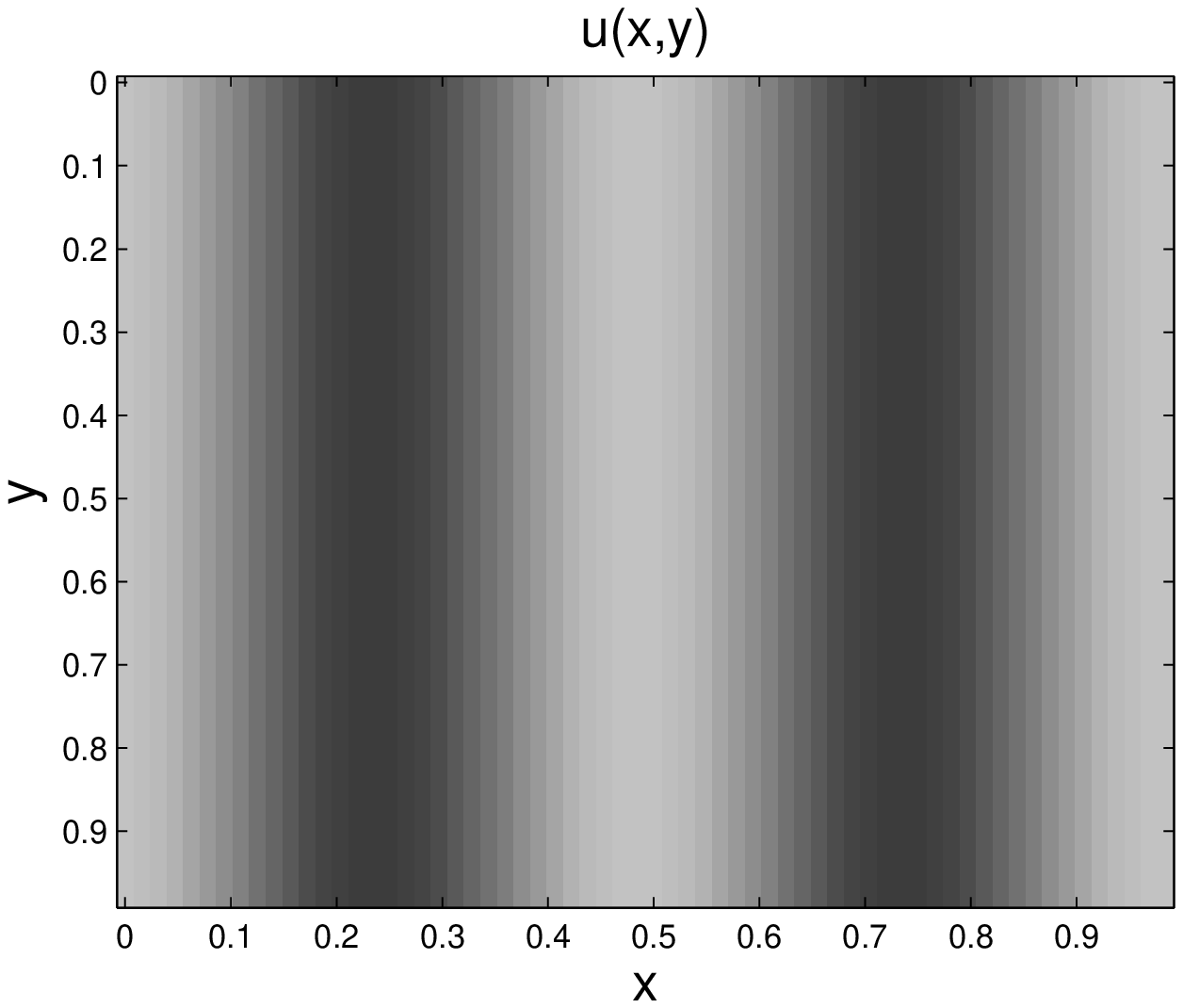} ~
    \includegraphics[width=0.25\textwidth]{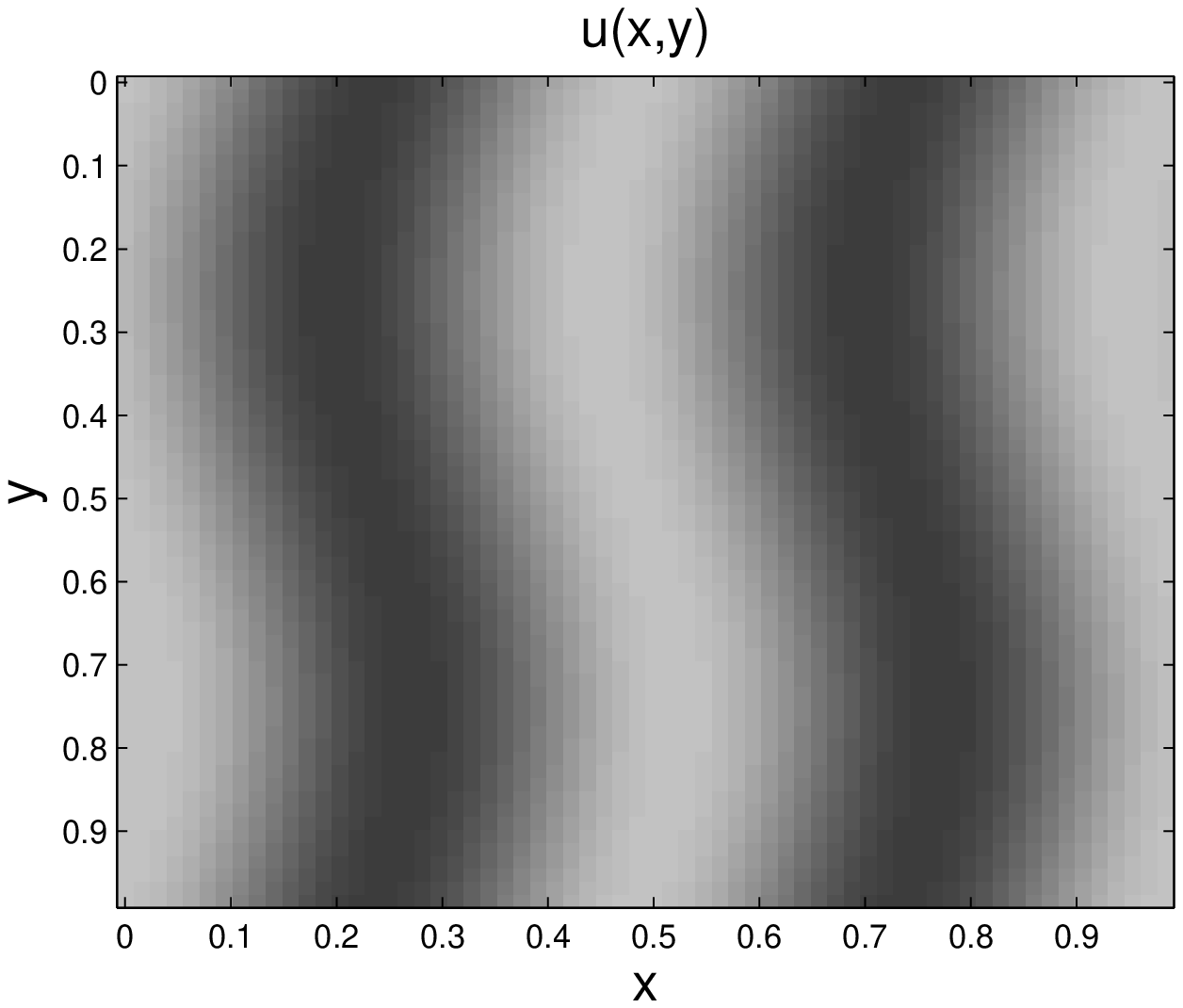} ~
    \includegraphics[width=0.25\textwidth]{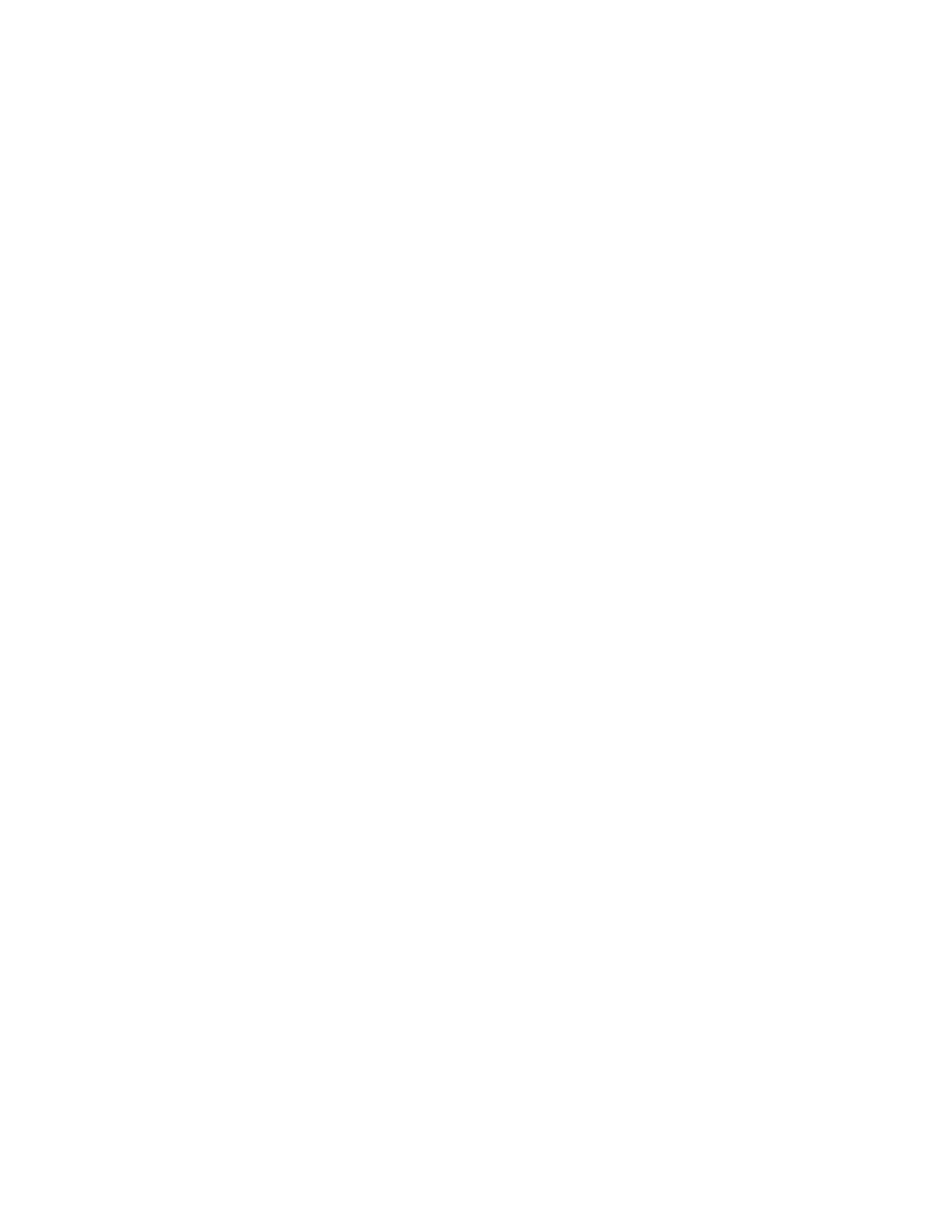}
    \caption{\footnotesize Vertical fixed points numerically obtained by p-String method. $\kappa=0.005$. In 1st column, $c=0$; in 2nd column, $c=0.01$; in 3rd column, $c=0.02$, and there is no longer a fixed point with 2-nucleations.}
    \label{fig_CP_vertical}
\end{figure}

A second group are almost invariant in $y$, sheared to an extent determined by small $c$. See Figure \ref{fig_CP_vertical} for an illustration, where these fixed points are numerically identified by the p-String method (Section \ref{sec_POid}) with vertical initial path (see Appendix \ref{sec_initialPath}). Their existences at small $c$ values are suggested by the following proposition (provable by a simple Taylor expansion):
\begin{Proposition}[Vertical fixed points; asymptotic]
    Let $v(\cdot)$ be a 1-periodic solution to $\kappa v_{xx}+v-v^3=0$. When $c$ is small enough,
    \[
        \tilde{u}(x,y)=v\left(x+\frac{c}{4\pi^2 \kappa}\sin(2\pi y)\right)+o(c)
    \]
    satisfies $\kappa (\tilde{u}_{xx}+\tilde{u}_{yy}) +\tilde{u}-\tilde{u}^3+c \sin(2\pi y)\partial_x \tilde{u}=0$.
    \label{thm_verticalFixPtAsymptotics}
\end{Proposition}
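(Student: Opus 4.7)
The plan is to substitute the explicit ansatz
\[
u_0(x,y) := v\bigl(x + c\,\alpha(y)\bigr), \qquad \alpha(y) := \frac{\sin(2\pi y)}{4\pi^2 \kappa},
\]
into the steady equation $\kappa \Delta u + u - u^3 + c\sin(2\pi y)\,\partial_x u = 0$, verify that the residual is $O(c^2)$ by a direct computation (this is the ``simple Taylor expansion'' the author refers to), and then invoke an implicit function theorem to upgrade this approximate solution to an exact $\tilde u = u_0 + o(c)$. Introduce the shorthand $\xi := x + c\,\alpha(y)$ so that $u_0(x,y)=v(\xi)$. Because $v$ satisfies the 1D ODE $\kappa v_{\xi\xi} + v - v^3 = 0$ pointwise in $\xi$, one obtains the exact identity
\[
\kappa\, u_{0,xx} + u_0 - u_0^3 \;\equiv\; 0
\]
for every $(x,y,c)$, with no expansion whatsoever. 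This is the main algebraic simplification that makes the calculation clean: the entire nonlinear part of the residual drops out.

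All that remains is $\kappa u_{0,yy} + c\sin(2\pi y)\,u_{0,x}$. Chain-rule differentiation yields $u_{0,x}=v'(\xi)$ and $u_{0,yy}=c\,\alpha''(y)\,v'(\xi)+c^2(\alpha'(y))^2 v''(\xi)$, so the full residual is
\[
c\,v'(\xi)\bigl[\kappa\,\alpha''(y)+\sin(2\pi y)\bigr] \;+\; c^2\,\kappa\,(\alpha'(y))^2\, v''(\xi).
\]
Since $v$ is non-constant, $v'\not\equiv 0$, so killing the $O(c)$ term for every $(x,y)$ forces the 1-periodic ODE $\kappa\,\alpha''(y)+\sin(2\pi y)=0$. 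Its unique mean-zero 1-periodic solution is exactly $\alpha(y)=\sin(2\pi y)/(4\pi^2\kappa)$, matching the claim. The residual is then in fact $O(c^2)$, stronger than the $o(c)$ asserted.

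To promote this asymptotic computation to the existence of a true solution of the form $\tilde u = u_0 + o(c)$, I would apply an implicit function theorem to $F(u;c) := \kappa\Delta u + u - u^3 + c\sin(2\pi y)\,\partial_x u$ acting on doubly 1-periodic $H^2$ functions, using $u_0$ as an $O(c^2)$-accurate approximate solution. The main obstacle is that the linearization $DF(v;0)=\kappa\Delta + 1 - 3v(x)^2$ has a one-dimensional kernel spanned by $v'(x)$, reflecting the $x$-translation invariance of the $c=0$ problem. I would handle this by fixing a transversality/normalization condition such as $\langle \tilde u - u_0,\, v'(x)\rangle_{L^2}=0$, after which the restricted linearization is Fredholm with trivial kernel and cokernel, and a standard implicit function argument produces the desired exact solution family $\tilde u(\cdot;c)$ consistent with the stated leading-order profile.
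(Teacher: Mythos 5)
Your computation is exactly the ``simple Taylor expansion'' the paper alludes to (the paper gives no written proof beyond that parenthetical), and it is correct: the substitution $\xi=x+c\alpha(y)$ kills the $x$-part exactly via $\kappa v_{\xi\xi}+v-v^3=0$, and the choice $\alpha(y)=\sin(2\pi y)/(4\pi^2\kappa)$ annihilates the $O(c)$ term, leaving an $O(c^2)$ residual as claimed. Your additional implicit-function-theorem upgrade goes beyond what the paper asserts; its only soft spot is the claim that $\ker(\kappa\Delta+1-3v(x)^2)$ on the $2$-torus is spanned by $v'(x)$ alone, which holds only when $4\pi^2k^2\kappa$ avoids the positive spectrum of $\kappa\partial_{xx}+1-3v^2$, but this does not affect the proposition as stated.
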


\begin{figure}[h]
    \centering
    \includegraphics[width=\textwidth]{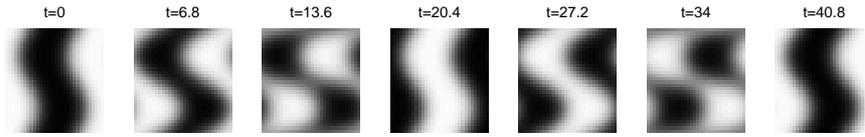}
    \caption{\footnotesize Snapshots of a periodic orbit that bifurcated from 1-nucleation vertical fixed point. $\kappa=0.005$, $c=0.05$.}
    \label{fig_PO_vertical}
\end{figure}

\begin{figure}[h!]
    \centering
    \includegraphics[width=0.5\textwidth]{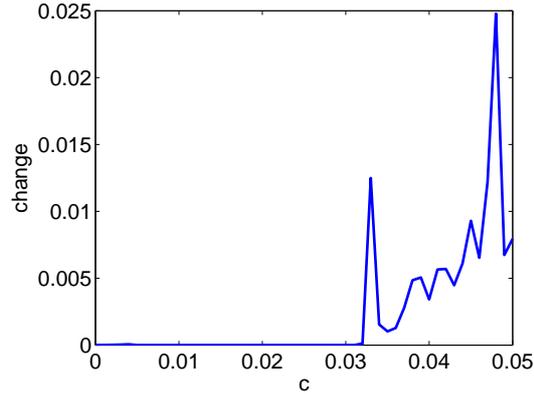}
    \caption{\footnotesize Bifurcation of fixed point to periodic orbit at $c\approx 0.032$, illustrated by $\|\phi_h-\phi\|_2$ as a function of $c$, where $\phi$ is the result of converged p-String method (with $10^{-8}$ tolerance threshold), and $\phi_h$ is a $h=0.1$ step evolution of $\phi$. $\kappa=0.005$, $c$ value is sampled from 0 to 0.05 with 0.001 increment.}
    \label{fig_PO_cBifurcation}
\end{figure}

Periodic orbits are also numerically observed. In experiments with p-String method, as $c$ increases ($\kappa$ fixed), each of vertical fixed point eventually bifurcates into a periodic orbit. Figure \ref{fig_PO_vertical} illustrates one of such periodic orbits, which bifurcated from the 1-nucleation fixed point in Figure \ref{fig_CP_vertical}. A video of this periodic orbit is available at \url{http://youtu.be/rJ74090jIvI} .

Figure \ref{fig_PO_cBifurcation} illustrates numerically when bifurcation occurs to the 1-nucleation fixed point. Such bifurcations are intuitive, because when $c$ is small, fixed points are suggested by Proposition \ref{thm_verticalFixPtAsymptotics}, but when $c$ is large, the system is dominated by shear that leads to periodic dynamics.

\begin{figure}[h]
    \centering
    \includegraphics[width=0.25\textwidth]{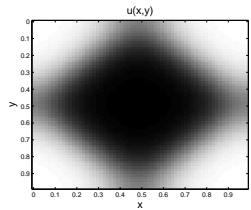}
    \caption{\footnotesize Diamond fixed point numerically obtained by p-String method. $\kappa=0.005$, $c=0$.}
    \label{fig_CP_diamond}
\end{figure}

Lastly, note there are fixed points other than horizontal and vertical types. For example, Figure \ref{fig_CP_diamond} illustrates another fixed point at $c=0$, obtained by p-String method with radial initial path (see Appendix \ref{sec_initialPath}). 

\paragraph{Local MLPs.}
Consider paths from $u=-1$ to $u=1$. For conciseness, we only describe transitions through (i) uniform $u=0$, (ii) 1-nucleation horizontal fixed point, and (iii) periodic orbit that bifurcated from 1-nucleation vertical fixed point.

\smallskip
(i) The simplest transition is through uniform $u=0$. Provably, there is a local MLP that contains only uniform images (i.e., independent of $x$ or $y$), and its action is the same as the minimum action of transition from $z=-1$ to $z=1$ in $\dot{z}=z-z^3$, which is 0.5 according to Theorem \ref{thm_action}. Both gMAM and up-down gMAM produced accurate approximations of this local MLP (detailed results not shown).

\smallskip

\begin{figure}[h]
    \centering
    \footnotesize
    \subfigure[Shear-indifferent MLP, obtained by up-down gMAM with linear initial path.]{
    \includegraphics[width=\textwidth]{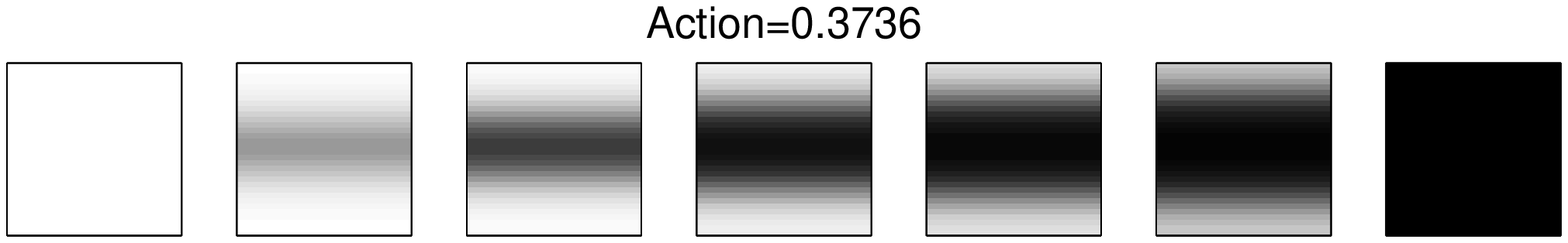}
    }
    \subfigure[Shear-facilitated MLP, obtained by up-down gMAM with elliptical initial path.]{
    \includegraphics[width=\textwidth]{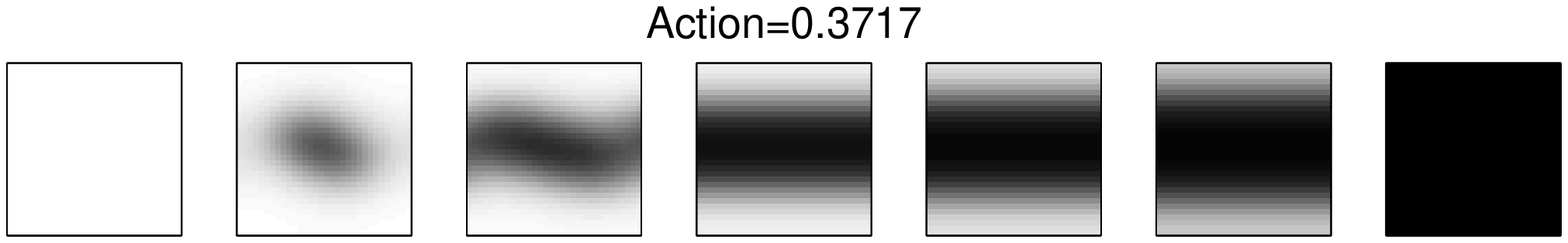}
    }
    \caption{\footnotesize Local MLPs through 1-nucleation horizontal fixed point. Each local MLP is illustrated by seven snapshots, uniformly distributed from reparameterized time 0 to 1; the middle snapshot corresponds to the fixed point. $\kappa=0.01$, $c=0.1$.}
    \label{fig_gMAMextend_k32_hor}
\end{figure}

(ii) Numerically we found multiple local MLPs through the same 1-nucleation horizontal fixed point. The fixed point was computed by p-String method (Section \ref{sec_POid}; with $\text{threshold}=10^{-6}$) using horizontal initial path. Figure \ref{fig_gMAMextend_k32_hor} illustrates two local MLPs computed by up-down gMAM. For the first local MLP, each point on the path is independent of $x$, and the shear plays no role in the transition. This case is essentially the same as Allen-Cahn in 1D space (Section \ref{sec_AC1D}), and the true minimum action can be computed by Theorem \ref{thm_action} as $\approx 0.3732$. For the second local MLP (first documented in \cite{HeVa08c}), however, shear facilitates the transition in the sense that action is smaller (and hence the transition is more likely).

gMAM with appropriate initial paths reproduce local MLPs of both types. Resulting paths are not visually discernable from up-down gMAM's results, and therefore not shown. gMAM minimum actions are $\approx 0.3740$ and $\approx 0.3720$, slightly less optimized than that of up-down gMAM. The local MLP seems to cross the fixed point, as we computed the minimum of $L^2$ distances between the fixed point and each image on gMAM path to be $\approx 0.0049 \ll 1$ in the shear-indifferent case, and $\approx 0.0093 \ll 1$ in the shear-facilitated case.

We thus conclude the system cannot be of orthogonal-type, because at least two local MLPs with different action values cross the separatrix at the same fixed point; otherwise there will be a contradiction with Theorem \ref{thm_action}.

This finding does not contradict the definition of quasipotential, because the quasipotential is a global infimum and thus unique, but what we observed are local minimizers and there could be many of them. For orthogonal-type systems, the action local minimum was proved to be unique once $x_a$ and $x_s$ are given, but now we see numerically it is not always the case.


\smallskip

\begin{figure}[h]
    \centering
    \footnotesize
    \subfigure[Shear-hindered MLP, obtained by up-down gMAM with linear initial path; intersection with the separatrix was given by p-String method.]{
    \includegraphics[width=\textwidth]{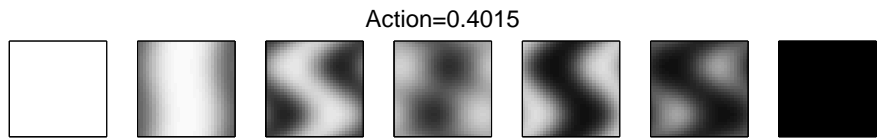}
    \label{fig_gMAMextend_k32_ver_up_down_gMAM}
    }
    \subfigure[Shear-hindered MLP, obtained by gMAM with vertical initial path.]{
    \includegraphics[width=\textwidth]{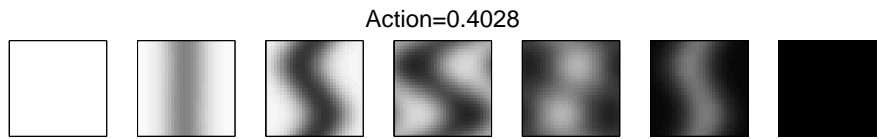}
    \label{fig_gMAMextend_k32_ver_gMAM}
    }
    \subfigure[Shear-hindered MLP, obtained by up-down gMAM with linear initial path; intersection with the separatrix was given by the point on the periodic orbit (obtained by p-String method) closest to the gMAM MLP above.]{
    \includegraphics[width=\textwidth]{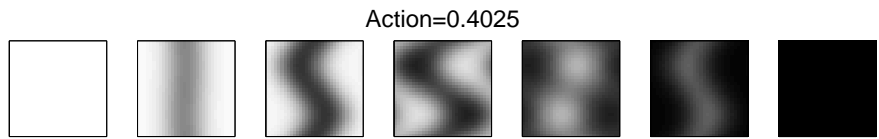}
    \label{fig_gMAMextend_k32_ver_closest2gMAM}
    }
    \caption{\footnotesize Local MLPs through the periodic orbit bifurcated from 1-nucleation vertical fixed point. Each local MLP is illustrated by seven snapshots, uniformly distributed from reparameterized time 0 to 1; the middle snapshot corresponds to crossing point on the periodic orbit. $\kappa=0.01$, $c=0.1$.}
    \label{fig_gMAMextend_k32_ver}
\end{figure}

(iii) There seems, like the orthogonal case, that each point on the periodic orbit (bifurcated from 1-nucleation vertical fixed point) is associated with at least one local MLP. See Figure \ref{fig_gMAMextend_k32_ver} for several local MLPs computed by gMAM and up-down gMAM. Figure \ref{fig_gMAMextend_k32_ver_up_down_gMAM} uses $x_s$ computed by p-String method (Section \ref{sec_POid}; with $\text{threshold}=10^{-6}$ and vertical initial path). This $x_s$ is not where the gMAM result crosses the separatrix (Figure \ref{fig_gMAMextend_k32_ver_gMAM}); however, both points are approximately on the same periodic orbit (the $L^2$-induced distance between gMAM-approximated MLP and the periodic orbit is $\approx 0.0199 \ll 1$). An additional up-down gMAM simulation with its separatrix-crossing aligned to that of gMAM (Figure \ref{fig_gMAMextend_k32_ver_closest2gMAM}) produces a path visually identical to that by gMAM. Meanwhile, up-down gMAM produces slightly better optimized action values (all three actions will be equal if there were infinite computing power).

This class of local MLPs are certainly not the global MLP because of their larger action values --- shear actually hinders the transition in these cases.

Note the gMAM result alone (Figure \ref{fig_gMAMextend_k32_ver_gMAM}) is not sufficient to demonstrate crossing at a periodic orbit: we evolved points on gMAM-approximated MLP using pure dynamics, and their evolutions did not show much periodic behavior. This is because gMAM only coarsely approximate a point on the periodic orbit (see the discussion on gMAM in Section \ref{sec_examples_3D} and Figure \ref{fig_3DgMAMoriginal}).

(iv)
Finally, the four types of local MLPs obtained above are compared in Figure \ref{fig_actionVsC} in terms of action values. There is an optimal shear strength $c$ that achieves the most likely transition among them.

\begin{figure}[h]
    \centering
    \includegraphics[width=0.5\textwidth]{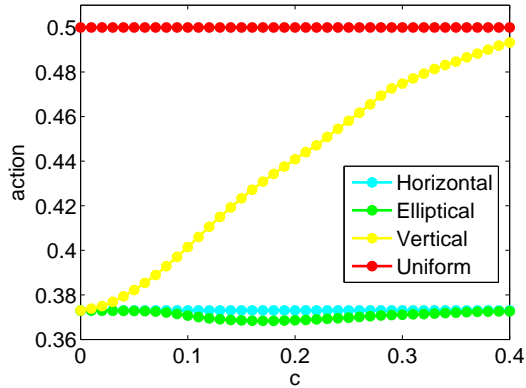}
    \caption{\footnotesize Action values of MLPs numerically obtained by gMAM as functions of $c$. Four types of MLPs are: `Horizontal' -- above case (ii)-1: through horizontal fixed point, invariant in $y$, indifferent to shear; `Elliptical' -- above case (ii)-2: through horizontal fixed point, facilitated by shear; `Vertical' -- above case (iii): through vertical fixed point or its bifurcated periodic orbit, hindered by shear; `Uniform' -- above case (i): through $u=0$ fixed point, indifferent to shear. $\kappa=0.01$ and $c$ samples from $0$ to $0.4$ with an increment $0.01$.}
    \label{fig_actionVsC}
\end{figure}

\section{Conclusion}
This article studies how metastable transitions in nongradient systems could differ from those in gradient systems, by investigating MLPs that minimize Freidlin-Wentzell action functional. In nongradient systems, there could be hyperbolic periodic orbits that are attracting on the separtrix. For a class of nongradient systems called orthogonal-type, it is demonstrated theoretically and by finite- and infinite-dimensional examples, that each such periodic orbit is associated with infinitely many local minimizers of the action functional, all of which have the same action characterized by a barrier height, and the corresponding local MLPs are the concatenations of two infinite-length heteroclinic orbits in two different deterministic dynamical systems. We also provided a non-orthogonal nongradient example, in which local MLPs through hyperbolic periodic orbit were numerically obtained. What contrasts the orthogonal case is the numerical observation of multiple local action minima associated with a single separatrix crossing location. Unfortunately, a theory for non-orthogonal systems is still incomplete; our argument only suggests that there is an MLP that crosses a hyperbolic attractor on the separatrix, but it remains unproved whether any hyperbolic attractor on the separatrix corresponds to at least one local MLP.

Two numerical methods were proposed and used in these investigations. One is a variant of String method named p-String method, which identifies hyperbolic periodic orbits in general deterministic dynamical systems. The other is up-down gMAM method, which improves gMAM in terms of both accuracy and efficiency by utilizing an input of separatrix crossing location from, for example, the p-String method.

\section{Appendix}
\subsection{Brief review of String method}
\label{sec_review_String}
Consider a system
\[
    dX=-\nabla V(X) dt +\sqrt{\epsilon} dW,
\]
and two local minima of $V$, $x_a$ and $x_b$. A Minimum Energy Path (MEP) between $x_a$ and $x_b$ is a curve $\phi(\alpha)$ parametrized by $\alpha$ that connects them and satisfies
\[
    (\nabla V)^{\perp}(\phi(\alpha))=0, \forall \alpha,
\]
where $(\nabla V)^{\perp}$ is the projection of $\nabla V$ orthogonal to $\phi$, i.e.,
\[
    (\nabla V)^{\perp}(\phi(\alpha)):=\nabla V(\phi(\alpha))-\left\langle \nabla V(\phi(\alpha)),\hat{\tau}(\alpha) \right\rangle \hat{\tau}(\alpha),
\]
with $\hat{\tau}(\alpha)=\phi_\alpha(\alpha)/\|\phi_\alpha(\alpha)\|$ being the unit tangent of $\phi$.

String method \cite{StringMethod} approximates an MEP by evolving a string $\phi(\alpha,t)$ in a fictitious time $t$, according to
\[
    \phi_t=-\nabla V(\phi)^\perp + \lambda \hat{\tau},
\]
where $\lambda=\lambda(\alpha,t)$ is a Lagrange multiplier that ensures a constant distance parametrization $\|\phi_\alpha\|_\alpha=0$. Simplified String method \cite{StringSimplified} further simplifies the dynamics to
\[
    \phi_t=-\nabla V(\phi) + r \hat{\tau},
\]
where $r$ again ensures constant distance.

Numerically, the string is discretized to $n+1$ points $\phi_i(t)$ and evolved by a splitting scheme based on alternating two substeps: at each step, first each discrete point is evolved by the same timestep using $\dot{\phi}_i=-\nabla V(\phi_i)$, and then reparametrization is implemented by redistributing points along the string via an interpolation. After a numerically converged evolution, the string at the final step approximates an MEP.

\subsection{Brief review of geometrized Minimum Action Method (gMAM)}
\label{sec_review_gMAM}
gMAM \cite{HeVa08a, VaHe08b, HeVa08c} established Lemma \ref{thm_geometricAction} and proposed to seek local MLP by optimizing the geometric action via a preconditioned steepest-descent algorithm, which evolves a path $X$ in a fictitious time $t$ according to
\[
    X_t=-\lambda \frac{\delta \hat{S}}{\delta X}, \quad \text{with} \quad \lambda:=\|f\|/\|X'\|.
\]
Due to the fact that $\hat{S}$ does not depend on $X$'s parameterization, a constant distance parameterization (i.e. $\|X'\|'=0$) is maintained so that the gradient descent dynamics remain well-conditioned.

The gradient can be computed by calculus of variations as
\[
    \frac{\delta \hat{S}}{\delta X}= - \lambda X''+(\nabla f-\nabla f^T)X' + \lambda^{-1} (\nabla f)^T f - \lambda' X'
\]
To numerically simulate the gradient flow, gMAM alternates between substeps of evolution and interpolation, the latter for ensuring the constant distance parameterization. Details can be found in \cite{HeVa08c}. The same idea applies to SPDEs (see also \cite{HeVa08a}). We write the two SPDE examples considered in this article (Sections \ref{sec_AC1D} and \ref{sec_AC2D_intro}) in a general (2+1)D form as
\[
    \partial_t \phi=\kappa\Delta\phi+f(\phi)+g(y)\partial_x\phi+\sqrt{\epsilon}\eta.
\]
Its geometrized action is
\begin{eqnarray}
    \hat{S}[\phi] & =\int_0^1 \Big( \sqrt{\int_{\mathbb{T}^2} | \phi'(s,z) |^2 dz} \sqrt{\int_{\mathbb{T}^2} | \kappa \Delta \phi(s,z) +f(\phi(s,z))+g(y)\partial_x\phi |^2 dz} \nonumber\\
    & -\int_{\mathbb{T}^2} \phi'(s,z) \big( \kappa \Delta \phi(s,z) +f(\phi(s,z))+g(y)\partial_x\phi \big) dz \Big) ds ,
    \label{eq_geometricAction}
\end{eqnarray}
where $z=(x,y)\in \mathbb{T}^2$ is the space coordinate and prime indicates partial derivative with respect to the reparametrized time $s$.

Calculus of variations computations lead to
\begin{eqnarray*}
    \frac{\delta \hat{S}}{\delta \phi} &= -\lambda' \phi' - \lambda \phi'' + \frac{1}{\lambda} \big( \kappa^2 \Delta\Delta\phi + 2\kappa f' \Delta\phi + \kappa f'' \nabla\phi\cdot\nabla\phi + f f' \nonumber\\
    & + \kappa g''(y) \partial_x\phi+\kappa 2g'(y)\partial_{xy}\phi-g(y)^2\partial_{xx}\phi \big)+2g(y)\partial_x\phi' ,
\end{eqnarray*}
where $f'$ and $f''$ denote $\partial f(\phi)/\partial \phi$ and $\partial^2 f(\phi)/\partial \phi^2$ (here prime on $f$ does not mean time derivative), and $\lambda(s)$ is defined by
\[
    \lambda=\frac{\sqrt{\int_{\mathbb{T}^2} | \kappa \Delta \phi(s,z) +f(\phi(s,z))+g(y)\partial_x\phi |^2 dz}}{\sqrt{\int_{\mathbb{T}^2} | \phi'(s,z) |^2 dz}}.
\]
The minimization is again performed by preconditioned gradient descent dynamics
\begin{align*}
    \phi_t &= -\lambda \frac{\delta \hat{S}}{\delta \phi} \\
    &= \lambda \lambda' \phi' + \lambda^2 \phi'' - \kappa^2 \Delta\Delta\phi - 2\kappa f' \Delta\phi - \kappa f'' \nabla\phi\cdot\nabla\phi - f f' \\
    & ~~ - \kappa g''(y) \partial_x\phi - \kappa 2g'(y)\partial_{xy}\phi + g(y)^2\partial_{xx}\phi - \lambda 2g(y)\partial_x\phi'
\end{align*}
which is a PDE in 4-dimension (fictitious time $t$ for optimization, reparametrized physical time $s$, and space $x$ and $y$). To numerical evolve this dynamics, we use pseudospectral discretization for $x$ and $y$, and 2nd-order central difference for $s$ (1st-order at boundaries). Time stepping in $t$ is done by Strang splitting, where in the first and third substeps the $- \kappa^2 \Delta\Delta\phi$ term is integrated by an exponential solver for half step, and the second substep is a full step Crank-Nicolson for the remaining system, where the $\lambda^2 \phi''$ term is diffusion-like and treated implicitly, and the rest terms are time-stepped explicitly.

\subsection{Properties of the 1D-space SPDE}
\label{sec_SPDE1D_structures}

\begin{proof}[Proof of Proposition \ref{thm_AC1D_orthogonal}]
    Simple calculus of variations and integration by part using periodic boundary condition shows
    \[
        \delta V=\int_0^1 \left( -\kappa u_{xx} \delta u-u(1-u^2)\delta u \right) dx
    \]
    and
    \begin{align*}
        \left\langle \frac{\delta V}{\delta u}, b \right\rangle
        &= \int_0^1 - (\kappa u_{xx}+u-u^3) c u_x \, dx \\
        &= -c\int_0^1 \kappa \left( \frac{u_x^2}{2} \right)_x+ \left( \frac{u^2}{2} \right)_x- \left( \frac{u^4}{4} \right)_x \, dx
        = 0 .
    \end{align*}
\end{proof}

Then we show that $V[\phi(\cdot,t)]$ is a Lyapunov function of the system without noise (note this proof extends to any orthogonal-type nongradient system):
\begin{Corollary}
    Given a solution $\phi(x,t)$ of \eqref{eq_AC1D}, we have
    \[
        \frac{dV[\phi(\cdot, t)]}{dt} \leq 0
    \]
    Equality occurs if and only if $\phi$ satisfies $\kappa \phi_{xx}+\phi-\phi^3=0$.
    \label{thm_Lyapunov}
\end{Corollary}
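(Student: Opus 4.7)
The plan is to reduce the claim to a direct consequence of Proposition~\ref{thm_AC1D_orthogonal}, mirroring the trick used in Lemma~\ref{thm_vanishing_dV} (where $V$ was shown to be nonincreasing along periodic orbits in the finite-dimensional orthogonal-type system). The statement must be understood for the deterministic version of \eqref{eq_AC1D} ($\epsilon=0$), since with additive spacetime white noise $V[\phi(\cdot,t)]$ is not classically differentiable in $t$; this parallels how the lemmas in Section~\ref{sec_orthoSystem_system} treat trajectories of \eqref{eq_orthoSystem_deterministic} rather than of \eqref{eq_orthoSystem}.

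First I would apply the chain rule in function space to write
\[
\frac{d}{dt}V[\phi(\cdot,t)] = \left\langle \frac{\delta V}{\delta u}[\phi(\cdot,t)],\, \phi_t(\cdot,t)\right\rangle,
\]
using the $L^2$ inner product from Proposition~\ref{thm_AC1D_orthogonal}. Regularity of $\phi$ (smoothness in $x$ and $t$, inherited from the parabolic character of Allen--Cahn and the periodic boundary condition) justifies differentiating under the integral and the integration by parts needed to identify $\delta V/\delta u$.

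Next I would substitute the orthogonal decomposition $\phi_t = -\frac{\delta V}{\delta u}[\phi] + b[\phi]$, which is exactly the deterministic form provided by Proposition~\ref{thm_AC1D_orthogonal}. This gives
\[
\frac{d}{dt}V[\phi(\cdot,t)] = -\left\|\frac{\delta V}{\delta u}[\phi]\right\|^2 + \left\langle \frac{\delta V}{\delta u}[\phi],\, b[\phi]\right\rangle,
\]
and the cross term vanishes by the orthogonality $\langle \delta V/\delta u, b\rangle = 0$ already proved. Thus $\frac{d}{dt}V = -\|\delta V/\delta u[\phi]\|^2 \leq 0$.

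For the equality case, vanishing of the right-hand side forces $\delta V/\delta u[\phi] \equiv 0$ pointwise in $x$; reading off the variational derivative from the computation in the proof of Proposition~\ref{thm_AC1D_orthogonal}, this is precisely $\kappa\phi_{xx}+\phi-\phi^3 = 0$. There is no real obstacle here: the argument is a direct lifting of Lemma~\ref{thm_vanishing_dV} from the finite-dimensional to the infinite-dimensional orthogonal-type setting, and the only care needed is to invoke enough smoothness of $\phi$ to justify the chain rule step.
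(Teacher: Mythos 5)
Your proof is correct and follows essentially the same route as the paper's: chain rule in $L^2$, substitution of the orthogonal decomposition $\phi_t=-\frac{\delta V}{\delta u}+b$, cancellation of the cross term by $\langle \frac{\delta V}{\delta u},b\rangle=0$, and identification of the equality case with $\frac{\delta V}{\delta u}=0$. Your added remark that the statement should be read for the $\epsilon=0$ dynamics is a sensible clarification but does not change the argument.
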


\begin{proof}
    By chain rule
    \begin{align*}
        \frac{dV[\phi(\cdot, t)]}{dt} = \int_0^1 \frac{\delta V}{\delta \phi} \phi_t \, dx
        = \left\langle \frac{\delta V}{\delta \phi}, -\frac{\delta V}{\delta \phi}+b \right\rangle
        = -\left\| \frac{\delta V}{\delta \phi} \right\|^2 \leq 0
    \end{align*}
    Inequality becomes equality if and only if $\frac{\delta V}{\delta \phi}=0$, i.e., $\kappa \phi_{xx}+\phi-\phi^3=0$.
\end{proof}

Now we prove the statements on fixed points and periodic orbits.

\begin{proof}[Proof of Proposition \ref{thm_AC1D_fixedPt1} and \ref{thm_AC1D_fixedPt2}]
    If $u(x)$ is a fixed point of \eqref{eq_AC1D}, it needs to satisfy
    \begin{equation}
        \kappa u_{xx}+u-u^3+c u_x=0
        \label{eq_subSystem}
    \end{equation}
    with boundary condition $u(x)=u(x+1)$.

    Assume without loss of generality that $c\geq 0$, because there is a 1-to-1 correspondence between solutions of \eqref{eq_subSystem} with $c=c_0$ and $c=-c_0$ via a coordinate change $x\mapsto -x$.

    Writing $q=u$ and $p=dq/dx$, and letting $H(q,p)=\kappa p^2/2-(1-q^2)^2/4$, \eqref{eq_subSystem} can be rewritten as
    \[ \begin{cases}
        q_x &=p \\
        \kappa p_x &=-\partial H/\partial q-cp
    \end{cases} \]
    and recognized as a mechanical system with dissipation.

    There are three critical points of the potential energy $U(q)=-(1-q^2)^2/4$, namely $q=0$, $q=1$ and $q=-1$. Clearly, $u_s(x)=0$, $u_+(x)=-1$ and $u_-(x)=1$ are solutions of \eqref{eq_subSystem}, and they trivially satisfy the boundary condition.

    $u_+=1$ and $u_=-1$ are stable, because they give zero value to the non-negative Lyapunov function $V[\cdot]$, and Corollary \ref{thm_Lyapunov} shows they are the only global minimizers. On the other hand, $u_s=0$ is unstable, because there are homogeneous states $u(x)= \epsilon$ and $u(x)= -\epsilon$ in its arbitrarily small neighborhood that correspond to smaller $V$ values.

    \begin{figure}[h]
        \centering
        \vspace{-10pt}
        \includegraphics[width=0.5\textwidth]{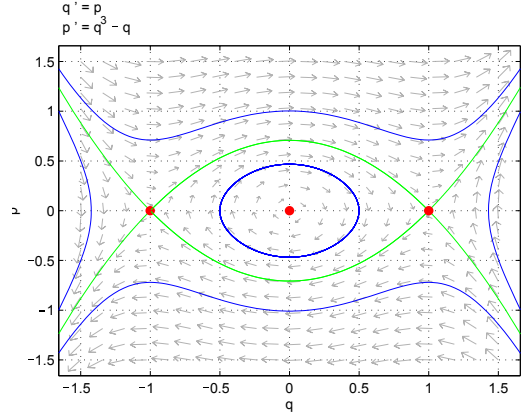}
        \vspace{-30pt}
        \caption{\footnotesize Phase portrait of Hamiltonian dynamics \eqref{eq_CP_Hamiltonian} with $\kappa=1$.}
        \label{fig_CP_Hamiltonian}
    \end{figure}

    When $c>0$, \eqref{eq_subSystem} is dissipative and $H(q,p)$ always converges to a local minimum of the potential energy $U$. Therefore, the only solutions $u(x)$ that satisfy the periodic boundary condition are the constant solutions $u_s$, $u_+$ and $u_-$. They give the only three fixed points in \eqref{eq_AC1D}.

    When $c=0$, the fixed point satisfies
    \begin{equation}
        \kappa u_{xx}+u-u^3=0,  \quad u(x)=u(x+1)
        \label{eq_CP_Hamiltonian}.
    \end{equation}
    This can be viewed as a 1D nonlinear oscillator, whose phase portrait is illustrated in Figure \ref{fig_CP_Hamiltonian}. We now count solutions that satisfy the boundary condition, which means they have to be either constant or periodic orbits with period $1/N:N \in \mathbb{Z}^+$.

    The energy in \eqref{eq_CP_Hamiltonian} is obviously conserved along any trajectory, and therefore we can let $E=H(q(t),p(t))$. It is easy to see only $E\in[-1/4,0]$ corresponds to a closed orbit.

    Rewrite $dq/dx=p$ as
    \[
        dx=\frac{1}{p} dq=\frac{\sqrt{\kappa}}{\pm \sqrt{2E+(1-q^2)^2/2}} dq.
    \]
    As can be seen in the phase portrait, a closed orbit at energy $E$ first goes from $(q_l,0)$ to $(q_r,0)$ and then goes back, where $q_l<0$, $q_r>0$, and $E=-(1-q_l^2)^2/4=-(1-q_r^2)^2/4$. The period of this orbit is
    \[
        \Delta x =2 \int_{-\sqrt{1-2\sqrt{-E}}}^{\sqrt{1-2\sqrt{-E}}} \frac{\sqrt{\kappa}}{\sqrt{2E+(1-q^2)^2/2}} dq
    \]
    Although there is no closed form expression for this integral, it can be shown that $\Delta x$ continuously deceases as $E$ decreases from 0 to $-1/4$.

    Consider two extremes: $E=0$ and $E=-1/4$. When $E=0$, the `periodic orbit' is the union of two heteroclinic orbits linking $q=-1$ and $q=1$, and the `period' is $\Delta x=\infty$. When $E=-1/4$ (corresponding to $q=0,p=0$), the periodic orbit degenerates to the fixed point $q=0$. To study periodic orbits near this fixed point, consider initial condition in an $\epsilon$ neighborhood of the origin, which linearizes \eqref{eq_CP_Hamiltonian}, and the solution is approximately harmonic, i.e.,
    \[
        u= \epsilon \left( a\cos{\frac{x}{\sqrt{\kappa}}}+b \sin{\frac{x}{\sqrt{\kappa}}}\right) +o(\epsilon)
    \]
    Therefore, among all periodic orbits in \eqref{eq_CP_Hamiltonian}, the smallest period is $2\pi \sqrt{\kappa}$ near $q=p=0$, and the period increases to $\infty$ at the heteroclinic orbits.

    Since $[2\pi \sqrt{\kappa},\infty) \bigcap \{1,1/2,1/3,\cdots\}$ is a finite set, only finitely many of the solutions have period $1/N$. $2\pi\sqrt{\kappa} \leq 1$ is necessary for there to be at least one, and as $\kappa$ decreases, the total amount will be nondecreasing. When $\kappa> 1/(2\pi)^2$, the solutions to \eqref{eq_CP_Hamiltonian} that satisfy the boundary conditions are only constant $u=-1,0,1$.
\end{proof}

\begin{proof}[Proof of Proposition \ref{thm_AC1D_PO}]
    Note if $u(x)$ satisfies $\kappa u_{xx}+u-u^3=0$, then $\phi(x,t):=u(x+c t)$ solves $\phi_t=\kappa \phi_{xx}+\phi-\phi^3+c \phi_x$. This is because by chain rule,
    \[
        -\phi_t+\kappa\phi_{xx}+\phi-\phi^3+c\phi_x = -c u'(x+ct)+\kappa u''(x+ct)+u(x+ct)-u(x+ct)^3+c u'(x+ct) = 0.
    \]
    Since any non-constant fixed point of \eqref{eq_AC1D} with $c=0, \epsilon=0$ satisfies $\kappa u_{xx}+u-u^3=0$ and $u(x)=u(x+1)$, $\phi(x,t)$ solves \eqref{eq_AC1D} with $c\neq 0, \epsilon=0$ and satisfies $\phi(x,t)=\phi(x,t+1/c)$. Therefore, it is a periodic orbit with $1/|c|$ period in $t$.
\end{proof}

\subsection{The triviality of an orthogonal decomposition of $\dot{q}=p,\dot{p}=-q$}
\label{sec_orthoHamiltonian}
\begin{Proposition}
    If a scalar field $V$ and a vector field $b$ satisfies $-\nabla V(q,p)+b(q,p)=(p,-q)$ and $\nabla V \cdot b=0$, then $V(q,p) \equiv \text{constant}$.
\end{Proposition}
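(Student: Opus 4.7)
The plan is to observe that the hypothesis turns $V$ into a Lyapunov function along the closed orbits of the harmonic oscillator flow, and then exploit periodicity of those orbits to conclude $\nabla V \equiv 0$.

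First I would solve the decomposition equation for $b$: since $-\nabla V + b = (p,-q)$, we have $b = (p,-q) + \nabla V$. Substituting this into the orthogonality constraint $\nabla V \cdot b = 0$ yields
\[
    \|\nabla V\|^2 + \nabla V \cdot (p,-q) = 0,
    \qquad \text{i.e.,} \qquad
    \|\nabla V\|^2 = q V_p - p V_q.
\]
Next, consider the auxiliary deterministic flow $\dot{q} = p,\ \dot{p} = -q$, whose orbits are circles of radius $\sqrt{q_0^2+p_0^2}$ traversed with period $2\pi$. Along such an orbit, the chain rule gives
\[
    \frac{d}{dt} V(q(t),p(t)) = p V_q - q V_p = -\|\nabla V(q(t),p(t))\|^2 \leq 0,
\]
where the last equality used the identity derived above. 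Hence $V$ is non-increasing along every circular orbit.

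Now I would use periodicity: since $(q(2\pi),p(2\pi)) = (q(0),p(0))$, we must have $V(q(2\pi),p(2\pi)) = V(q(0),p(0))$, so
\[
    0 = V(q(2\pi),p(2\pi)) - V(q(0),p(0)) = -\int_0^{2\pi} \|\nabla V(q(t),p(t))\|^2 \, dt.
\]
The integrand is non-negative and continuous, so it vanishes identically along the orbit, giving $\nabla V = 0$ on every circle centered at the origin. These circles cover $\mathbb{R}^2 \setminus \{0\}$, so $\nabla V \equiv 0$ off the origin, and continuity extends this to all of $\mathbb{R}^2$. Therefore $V$ is constant.

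I do not foresee a real obstacle; the only conceptual step is recognizing that the rotational flow $(p,-q)$ appearing in the decomposition is itself a closed-orbit system, so that the inequality $\dot{V} \leq 0$ coming from orthogonality must in fact be equality. Once that is seen, the argument is three lines. An alternative route is to pass to polar coordinates and rewrite the identity as $V_r^2 + V_\theta^2/r^2 = V_\theta$, observe $V_\theta \geq 0$, and use $2\pi$-periodicity in $\theta$; this is essentially the same proof in coordinate form.
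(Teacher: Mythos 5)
Your proof is correct and uses essentially the same mechanism as the paper's: the orthogonality condition gives $\frac{d}{dt}V=-\|\nabla V\|^2\leq 0$ along the periodic circular orbits of $\dot q=p,\ \dot p=-q$, and periodicity forces this to vanish. The only difference is cosmetic --- the paper first concludes $V=U(r)$ and then applies orthogonality a second time to get $[U'(r)]^2 r=0$, whereas you extract $\nabla V\equiv 0$ directly from $\int_0^{2\pi}\|\nabla V\|^2\,dt=0$, which is a slight streamlining.
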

\begin{proof}
    Let $x=(q,p)$. It is easy to see that $V(x(t))$ is a Lyapunov function in
    \[
        \dot{x}=-\nabla V(x)+b(x).
    \]
    However, for any $r\geq 0$, $q^2+p^2=r^2$ is a periodic orbit in the system, and therefore $V$ must be constant on each of these periodic orbit. Hence there exists a scalar function $U(r)$ such that
    \[
        V(q,p)=U(r).
    \]
    Since $\nabla V \cdot b=0$ is equivalent to $\nabla V \cdot ((p,-q)+\nabla V)=0$, chain rule leads to
    \[
        U'(r) \begin{bmatrix} \frac{q}{r} \\ \frac{p}{r} \end{bmatrix} \cdot \begin{bmatrix} p+U'(r) \frac{q}{r} \\ -q+ U'(r) \frac{p}{r} \end{bmatrix} = 0,
    \]
    and therefore $[U'(r)]^2 r=0$. This leads to $U(r) \equiv \text{constant}$, and hence $V(q,p) \equiv \text{constant}$.
\end{proof}

\subsection{Initial paths used in path evolutions for the sheared Allen-Cahn system}
\label{sec_initialPath}
Denote by $n+1$ the number of points on a discrete path, with $\phi_j$ being the $j$-th point, $1 \leq j \leq n+1$. Here are initial paths used in our path evolutions.
\begin{itemize}
\item
    `Linear'. Given any two points $\phi_a$ and $\phi_b$, the path is given by
    \[
        \phi_j = \phi_a \frac{n+1-j}{n} + \phi_b \frac{j-1}{n}.
    \]
\item
    `Horizontal'. This path corresponds to a non-optimal nucleation in $y$ direction between $\phi_1(x,y)=-1$ and $\phi_{n+1}(x,y)=1$. Points on this path are given by Gaussian in $y$ with width controlled by $j$. More specifically,
    \[
        \phi_j(x,y)=2 \exp \left( -\frac{(0.5-y)^2}{4/9(j/n)^2} \right)-1
    \]
    for $2 \leq j \leq n$.
\item
    `Double horizontal'. This path corresponds to two nucleations in $y$ direction between $\phi_1(x,y)=-1$ and $\phi_{n+1}(x,y)=1$, obtained by stacking two `Horizontal'. More specifically,
    \[
        \phi_j(x,y)=2 \exp \left( -\frac{(0.5-2\cdot \text{mod}(y,0.5))^2}{4/9(j/n)^2} \right)-1
    \]
    for $2 \leq j \leq n$.
\item
    `Elliptical'. This path corresponds to skewed prolate Gaussians. More specifically,
    \[
        \phi_j(x,y)=2 \exp \left( -\frac{(y-x/16-15/32)^2+(y/16+x-17/32)^2/16}{4/9(j/n)^2} \right)-1
    \]
    for $2 \leq j \leq n$.
\item
    `Vertical'. `Horizontal' with $x$ and $y$ swapped.
\item
    `Double vertical'. `Vertical' with $x$ and $y$ swapped.
\item
    `Radial'. This path corresponds to a non-optimal nucleation in $\sqrt{x^2+y^2}$ direction between $\phi_1(x,y)=-1$ and $\phi_{n+1}(x,y)=1$. More specifically,
    \[
        \phi_j(x,y)=2 \exp \left( -\frac{(0.5-x)^2+(0.5-y)^2}{4/9(j/n)^2} \right)-1
    \]
    for $2 \leq j \leq n$.
\end{itemize}
Note these specific function forms are not necessary, as long as symmetries are broken in the same way.

\section{Acknowledgment}
This work was supported by NSF grant DMS-1521667. The problem was set up by Eric Vanden-Eijnden, with whom the author gratefully had multiple stimulating discussions. We also thank Robert Kohn, John Guckenheimer, David S Cai, and two anonymous referees for helpful suggestions, which greatly improved the quality of this article.

\footnotesize
\bibliographystyle{siam}
\bibliography{molei25}

\end{document}